\definecolor{darkblue}{rgb}{0,0,0.5} 
\newtheorem{theorem}{Theorem}[section]
\newtheorem{lemma}[theorem]{Lemma}
\newtheorem{proposition}[theorem]{Proposition}
\newtheorem{corollary}[theorem]{Corollary}
\theoremstyle{definition}
\newtheorem{definition}[theorem]{Definition}
\newtheorem{remark}[theorem]{Remark}
\author{Elena Celledoni\footnote{Email: \href{mailto:elena.celledoni@math.ntnu.no}{elena.celledoni@math.ntnu.no}}, Markus Eslitzbichler\footnote{Email: \href{mailto:markus.eslitzbichler@math.ntnu.no}{markus.eslitzbichler@math.ntnu.no}} and Alexander Schmeding\footnote{Email: \href{mailto:alexander.schmeding@math.ntnu.no}{alexander.schmeding@math.ntnu.no}}}
\title{Shape Analysis on Lie Groups with Applications in Computer Animation}
\DeclareMathOperator{\dexp}{dexp}
\date{}
\begin{document}

\newcommand{\Diffeom}{\mathrm{Diff}^+(I)}
\newcommand{\LieG}{G}
\newcommand{\LieA}{\mathfrak{g}}
\newcommand{\SOT}{SO(3)}
\newcommand{\sot}{\mathfrak{so}(3)}
\newcommand{\SOTCurves}{C^\infty(M, \SOT)}
\newcommand{\sotCurves}{C^\infty(M, \sot)}
\newcommand{\Evol}{\text{Evol}^r_{\SOT}}

\newcommand{\RightTrans}{R}
\newcommand{\LeftTrans}{L}
\newcommand{\SRVT}{\mathcal{R}}
\newcommand{\SRVTInv}{\mathcal{R}^{-1}}

\newcommand{\LQ}{r(q)}
\newcommand{\adj}{\dagger}

\newcommand{\SSpace}{\mathcal{S}}
\newcommand{\PSpace}{\mathcal{P}}
\newcommand{\Energy}{\mathcal{E}}

\maketitle

\bigskip
\begin{abstract}

Shape analysis methods have in the past few years become very popular, both for theoretical exploration as well as from an application point of view.
Originally developed for planar curves, these methods have been expanded to higher dimensional curves, surfaces, activities, character motions and many other objects.

In this paper, we develop a framework for shape analysis of curves in Lie groups for problems of computer animations.
In particular, we will use these methods to find cyclic approximations of non-cyclic character animations and interpolate between existing animations to generate new ones.
\end{abstract}
\medskip

\textbf{Keywords:} Shape analysis, Curve matching, Geodesics of the elastic\\ metric, Infinite-dimensional manifolds, Lie groups,  Computer animation.

\medskip

\textbf{MSC2010:} 58D15 (primary); 58D05, 22E65, 58B10, 58B20 (secondary)

\tableofcontents

\section{Introduction}

Motions of virtual characters in movies and interactive applications are usually represented using a skeletal animation approach where the data consists of curves tracking the positions of the bones throughout the motion.
These curves can be processed by mathematical methods to produce new motions \cite{pejsa_state_2010}.
It has previously been shown that shape analysis methods can be successfully applied to solve problems in computer animation by considering entire animations as curves and shapes \cite{eslitzbichler_modelling_2014, bauer_landmark-guided_2015}.
In practice, the data consists of curves in $SO(3)^d$, where $d$ is the number of bones in the skeleton.
However, in the earlier approaches, curves have been represented using Euler angles, neglecting the underlying Lie group structure.
We here report on the results we obtained by appropriately including the underlying geometric structure in the mathematical models and their numerical discretizations.
The intrinsic geometric formulation is robust and works very well in problems of motion blending and curve closing, where earlier the same performance could be only obtained  by using ad hoc strategies, e.g. keeping track of carefully chosen feature points along the curves.

In section~\ref{background}, we will briefly introduce shape analysis and motivate how techniques from shape analysis on Lie groups can be applied to computer animation by treating character animations as points in an infinite dimensional manifold.
This manifold is in fact an infinite dimensional Lie group where we are interested in computing distances and geodesics.

In Section \ref{Sec:ShapeAnalysis}, we discuss some of the main tools for shape analysis on Lie groups, which are later applied in this paper. 
An approach to curves evolving on Riemannian manifolds was earlier presented in \cite{su_rate-invariant_2014}. 
In our work, we exploit the additional structure provided by the Lie group setting.
To perform animation blending (i.e., interpolation between existing animations to create a new one), we are interested in simple and computationally efficient approaches to compute geodesics between two given shapes and to this end we define a metric on shape space. We show that this metric is associated to the geodesic distance of the pullback of an $L^2$-inner product (cf.\ Theorem \ref{thm: pbmetric} and Theorem \ref{thm: globaldist}).
The Lie group formalism allows us in Section \ref{Sec:ClosedCurves} to efficiently solve the curve closing problem (which we use to approximate an existing non-periodic animation with a periodic one) using a gradient flow approach.
Finally, in Section \ref{Sec:NumericalResults}, we present numerical results both for problems in animation blending and animation closing.

We demonstrate that the proposed techniques exhibit better qualitative performance compared to previous work \cite{eslitzbichler_modelling_2014, bauer_landmark-guided_2015}.
In animation blending the new approach allows interpolating between a wider range of motions.
In animation closing the proposed Lie group formulation is naturally intrinsic, and allows avoiding undesired artefacts due to coordinate singularities.
While our specific applications use the special orthogonal group $\SOT$ as the underlying Lie group, the techniques developed in this paper are not restricted to this setting.

\section{Background}
\label{background}

\subsection{Shape analysis}

Many problems in object and activity recognition can be formulated in terms of similarities of \emph{shapes} \cite{bauer_constructing_2014, srivastava_statistical_2005,srivastava_shape_2011,eslitzbichler_modelling_2014, klassen_path-straightening_2005, mio_shape_2007,younes_spaces_2012, younes_computable_1998, fuchs_shape_2009, sharon_2d-shape_2006, cotter_reparameterisation_2012}.
By shapes we typically mean unparametrized curves in a vector space or on a manifold, although similar methods have been developed for surfaces \cite{bauer_new_2011, kurtek_novel_2010, kurtek_elastic_2012, bauer_sobolev_2011}.

In recent years, a number of methods based on differential geometry have been developed to tackle such problems.
Of particular importance is the question of how to model and work with unparametrized curves.

A popular approach is to define shapes as equivalence classes of certain mappings, where the equivalence relation is induced by reparametrization.
Given two curves $c_0, c_1: I \rightarrow M$ with $I = [a,b] \subset \mathbb{R}$ and $M$ a vector space or a manifold, we define equivalence classes $[c_0], [c_1]$ via the equivalence relation:
\[
c_0 \sim c_1 \iff \exists \, \varphi: c_0 = c_1 \circ \varphi,
\] where $\varphi$ is a smooth, strictly increasing bijection on $I$.
We denote by $\PSpace$ the space of parametrized curves containing $c_0$ and $c_1$.
Typical choices of $\PSpace$ include absolutely continuous functions, immersions, embeddings or piecewise linear functions \cite{bauer_overview_2014, lahiri_precise_2015}.
The equivalence classes, or shapes, can then be collected in the corresponding \emph{shape space}:
\[
    \SSpace := \PSpace /_\sim.
\]

Shape analysis then concerns itself with the study of the spaces $\SSpace$ and $\PSpace$ from both a theoretical and a practical perspective.

Many applications require for example a distance function in $\SSpace$ to measure similarities between shapes, which can be used to perform statistical analysis such as clustering and object recognition \cite{srivastava_statistical_2005}.
Distance functions on shape space $\SSpace$ are typically obtained from a distance function $d_\PSpace$ on the underlying space $\PSpace$ of parametrized curves as follows:
\[
    d_\SSpace([c_0], [c_1]) := \inf_{\varphi} d_\PSpace (c_0, c_1 \circ \varphi),
\] where $\varphi$ ranges over all possible curve reparametrizations.
The computation of $d_\SSpace$ amounts to the solution of an optimization problem with appropriate numerical techniques \cite{sebastian_aligning_2003, srivastava_shape_2011}.

In recent years, a distance function on $\PSpace$ based on the so-called \emph{elastic metric} has become popular.
The elastic metric is a first-order Sobolev-type metric, which is easy to compute and has desirable theoretical properties \cite{michor_overview_2007, bauer_sobolev_2011, srivastava_shape_2011, bauer_overview_2014}.

In this paper, we consider curves and shapes in Lie groups to solve problems in computer animation.
In that context the use of Lie groups arises naturally by using Euclidean transformations and rotations to describe motions of virtual characters.

\subsection{Skeletal animation}
In computer graphics, used and seen in movies, tv-series and video games, but also in educational and scientific software, virtual characters are most commonly represented as surfaces in $\mathbb{R}^3$.

Motions of such characters are usually represented using a \emph{skeletal animation} approach.
The underlying skeleton consists of bones connected by joints.
The vertices of the surface mesh are attached to bones, i.e., their positions are specified in a coordinate system that is aligned with a bone.
Then, when the skeleton is animated by specifying the positions of all the joints as a function of time, the vertices move accordingly.
See Figure \ref{Fig:3DCharacter} for an example virtual character.

\begin{figure}
\center
\includegraphics[trim = 9cm 1cm 0 2cm, clip=true, scale=0.2]{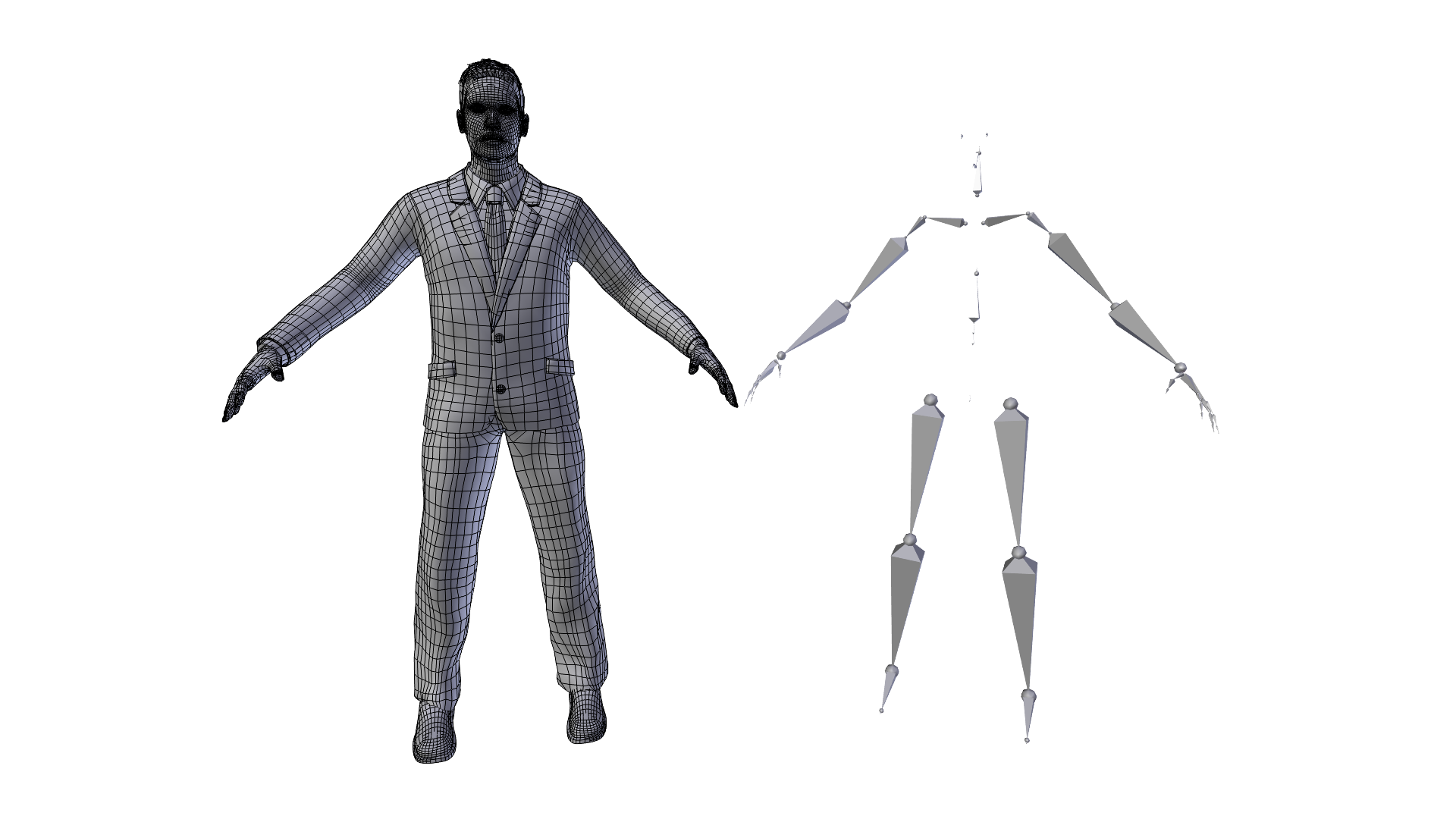} 
\includegraphics[trim = 9cm 0.5cm 0 2cm, clip=true, scale=0.2]{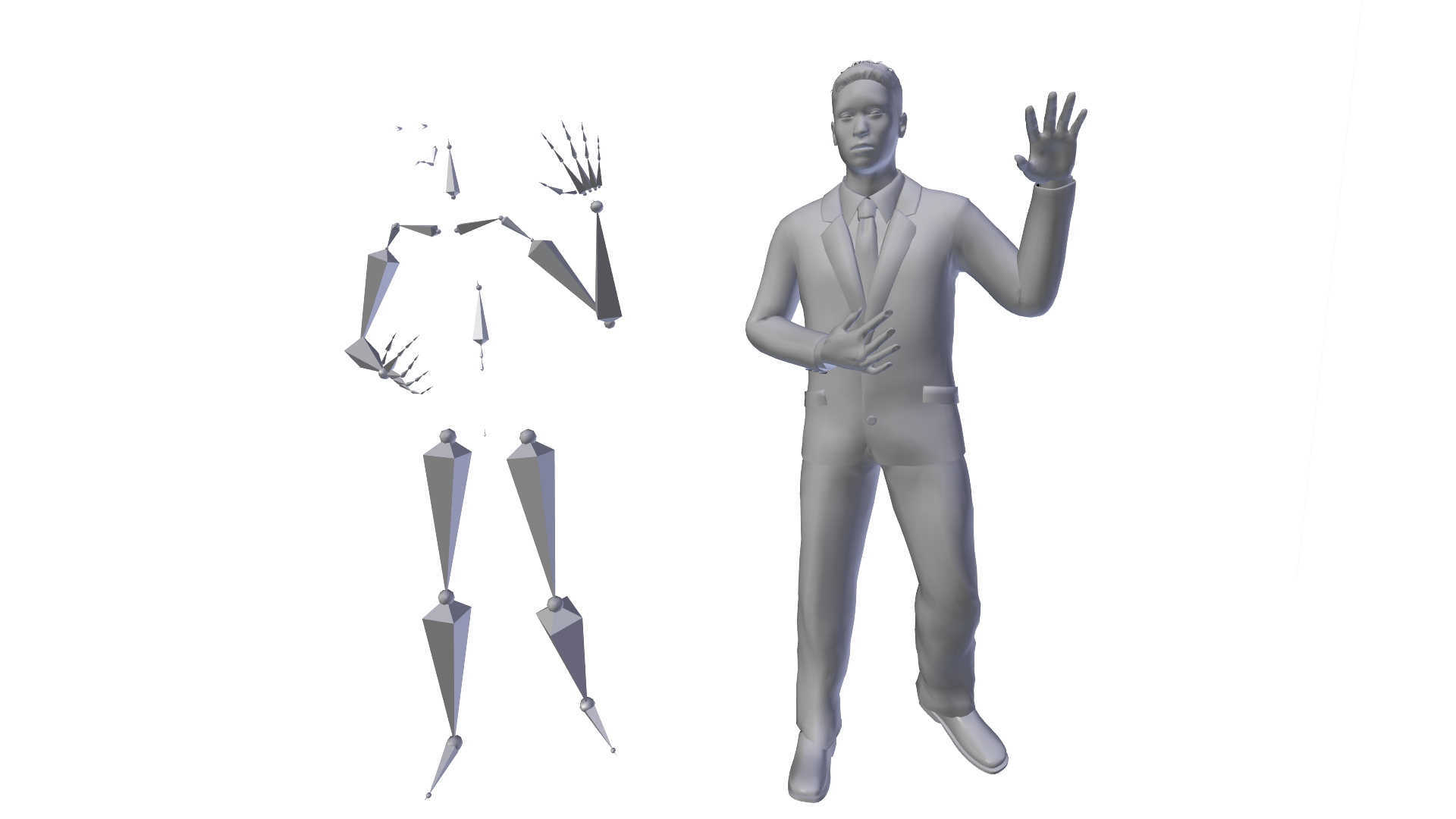} 
\caption{\label{Fig:3DCharacter} Surface mesh depicting a human character with underlying skeleton.
Only bones with a degree of freedom are shown, i.e., for example the hips are not visible in the skeleton and the legs appear at an offset from the character's center.}
\end{figure}

A skeleton is a rooted tree, where each node (bone) represents a Euclidean coordinate system.
The edge (joint) between a node and its parent represents a Euclidean transformation indicating how the two nodes' coordinate systems are positioned and oriented relative to each other.
By following this kinematic chain of relative transformations from a node to the root, all local coordinate systems can be brought into a single global coordinate system.

A character's pose is specified by assigning values to all degrees of freedom in the skeleton (i.e., all the joints).
Each such configuration is an element of \emph{joint space} $\mathcal{J} := SE(3)^d$.
An animation is then a function from some time interval $[a,b]$ into joint space, specifying a character pose for every point in time.\\
For human characters, bones have fixed lengths and joint space therefore consists only of rotations between bones (i.e., elements in $\SOT$, the special orthogonal group) instead of more general Euclidean transformations ($SE(3)$).
\medskip

A typical approach of generating such animations for use in videos or games is \emph{motion capturing}, where an actor's motions are recorded from multiple points of view and the underlying skeletal motion is extracted.
This is often preferred over alternative means of generating animations, such as manual construction or inverse kinematics.

Motion capturing however suffers from limitations inherent in its process, namely that the data is static and limited in range.
If we want an animated character to run at different speeds, we need corresponding recordings.
If we want a virtual character to keep walking forward for an indeterminate amount of time (e.g., in a video game under a player's control), we need to find a way of adapting a finite walking animation so that we can repeat it without visible discontinuities.

Much work has been done to procedurally manipulate motion data to tackle these and many more problems in computer graphics.
We refer to, among many others, \cite{kovar_motion_2002,kovar_flexible_2003,kovar_automated_2004, gonzalez_castro_cyclic_2010, pejsa_state_2010, shoemake_animating_1985} for an overview and some specific approaches in this field.
\\
To take into account the geometry inherent to skeletal animation, in the following sections, we propose methods for motion blending and animation closing on Lie groups.

\section{Shape analysis on Lie Groups}\label{Sec:ShapeAnalysis}

In this section we will develop a framework for shape analysis for curves on Lie groups. 
Albeit our main application is shape analysis for curves on $\SOT$, the theoretical framework exhibited here is fairly general.
The methods discussed in this and the next section can also be applied to certain classes of infinite-dimensional Lie groups.

In the following, $G$ will refer to a Lie group with Lie algebra $(\mathfrak{g}, [\cdot,\cdot])$, identity $e$ and multiplication $\, \cdot: G \times G \rightarrow G$. 
Here $G$ might even be an infinite-dimensional Hilbert Lie group, i.e.\ a Lie group in the sense of \cite{kriegl_regular_1997} (or \cite{MR2261066}) modelled on a Hilbert space.\footnote{Assuming that the Lie group is modelled on a Hilbert space will assure that our results carry over without any change. However, some of the results extend even to Banach Lie groups, cf.\ Remark \ref{rem: frame:infty}.}
We will denote left and right translations by $\LeftTrans_g$ and $\RightTrans_g$ respectively, i.e., $\LeftTrans_{g_1} ( g_2 ) = g_1 \cdot g_2$ and $\RightTrans_{g_1} ( g_2 ) = g_2 \cdot g_1$.
We further denote by $\Gamma$ the \emph{evolution operator}, which is defined as
\begin{align*}
&\Gamma: C^\infty(I, \mathfrak{g}) \rightarrow \lbrace c \in C^\infty(I, G): c(0) = e \rbrace =: C^\infty_* (I, \LieG)\\
&\Gamma (q) (t) := c(t) \qquad \text{where} \qquad \od{}{t}c(t)=\RightTrans_{c(t)*}(q(t)),\quad c(0)=e,
\end{align*} where $I \subset \mathbb{R}$ is an interval with $0 \in I$ and $\RightTrans_{g*} = T_e R_g$ is the tangent map of the right translation at the identity $e$.
Without loss of generality, we let $I=[0,1]$.

The inverse of the evolution operator is the so called \emph{right logarithmic derivative} 
  \begin{align*}
   &\delta^r \colon C^\infty_* (I, \LieG) \rightarrow C^\infty (I,\LieA), \\
   &\delta^r c  := (R_{c}^{-1})_* (\dot{c}).
  \end{align*}

\subsection{Manifolds of smooth mappings}
In this section we recall the construction of the manifold structure on spaces of smooth mappings with values in a Lie group.
Moreover, we review some basic facts on these manifolds which will be used throughout the article.
Following \cite[Chapter 42]{kriegl97tcs}, we construct the manifold structure on $C^\infty (I,G)$ using a local addition on $G$.
A local addition allows us to choose local parametrisations on a manifold in a smooth way.  

\begin{definition}[{Local Addition, cf.\ \cite[42.4]{kriegl97tcs}}]
 A local addition on a manifold $M$ is a smooth map $\Sigma \colon TM \supseteq \Omega \rightarrow M$ defined on an open neighbourhood $\Omega$ of the zero-section such that 
 \begin{compactitem}
  \item $\Sigma (0_x) = x$ for all $x \in M$, where $0_x \in T_x M$ is the zero element,
  \item $(\pi_{TM}, \Sigma) \colon \Omega \rightarrow M \times M, \quad \xi_x \mapsto (\pi_{TM} (\xi_x) , \Sigma (\xi_x)) = (x , \Sigma (\xi_x))$ induces a diffeomorphism onto a neighbourhood of the diagonal in $M\times M$.
  Here $\pi_{TM}$ is the tangent bundle projection.
 \end{compactitem}
\end{definition}
 
Since $G$ is a Lie group we can construct a local addition as follows.
Choose a chart $\psi \colon \LieG \supseteq V_\psi \rightarrow W_\psi \subseteq \LieA$ around the identity in $\LieG$.
Denoting elements in the tangent space over $g \in \LieG$ by $\xi_g$, we obtain a local addition via 
\begin{displaymath}
 \Sigma \colon T\LieG \supseteq \Omega := \bigcup_{g \in G} R_{g*} (W_\psi) \rightarrow \LieG ,\quad \xi_g \mapsto g \cdot \psi^{-1} (R_{g*}^{-1} (\xi_g)).
\end{displaymath}

Now define $C^\infty_\alpha (I,T\LieG) := \{f \in C^\infty (I,T\LieG) \mid \pi_{T\LieG} \circ f = \alpha\}$ for $\alpha \in C^\infty(I,G)$. 
This is a vector space with the pointwise operations (in the fibres $T_{\alpha (t)} G$ for each $t$).

In the following we endow spaces of smooth functions $C^\infty (I,M)$ to a (possibly infinite-dimensional) manifold $M$ with the compact open $C^\infty$-topology.
This topology allows to control functions and their partial derivatives on any compact subset of $I$. We refer to \cite[Section I.5]{MR2261066} for more information on this topology.
Then $U_\alpha := \{h\in C^\infty (I,\LieG) \mid \forall t \in I,\ (\alpha(t),h(t)) \in \text{Im} (\pi_{TG} ,\Sigma)\}$ is open in $C^\infty (I,G)$ and one can prove that the assignment 
\begin{displaymath}
 \varphi_\alpha \colon C^\infty (I,G) \supseteq U_\alpha \rightarrow C^\infty_\alpha (I,T\LieG),\quad f \mapsto (\pi_{TG}, \Sigma)^{-1} \circ (\alpha,f)
\end{displaymath}
is a manifold chart for $C^\infty (I,\LieG)$.
By \cite[Theorem 7.8]{MR3351079} (and the remarks before the cited theorem) these charts turn $C^\infty (I,G)$ into a Fr\'{e}chet manifold (i.e.\ a manifold modelled on a locally convex space which is complete and metrizable).

At this point we leave the realm of Banach manifolds, whence the standard definition for smooth maps (i.e.\ viewing the derivative as a continuous map to a space of continuous operators) breaks down.
One way to define smooth maps beyond the Banach setting is the so called Bastiani calculus (see \cite{bastiani64}): A map between Fr\'{e}chet spaces is smooth if all iterated directional derivatives exist and are continuous in a natural sense (see e.g.\ \cite[I.2]{MR2261066} for more details). There are other (in general inequivalent) ways to define smooth mappings on infinite-dimensional spaces, such as the so called convenient calculus (see \cite{kriegl97tcs}). Fortunately, these choices yield the same smooth maps on Fr\'{e}chet manifolds. As we exclusively work with Fr\'{e}chet manifolds we can thus disregard the differences and freely use results formulated in both calculi.\footnote{Note that we have already done this, as \cite{MR3351079} (Bastiani calculus) just generalizes \cite[Section 42]{kriegl97tcs} (convenient calculus). Further, both calculi can handle smooth maps on the non-open domain $I = [a,b]$, see e.g.\ \cite[Definition 7.2]{MR3351079} and \cite[Chapter 24]{kriegl97tcs}.} 

Furthermore, we quote from \cite[41.10, Theorem 42.13]{kriegl97tcs} and \cite[Theorem 7.9]{MR3351079} the following results which we will use later on. 

\begin{proposition}\label{prop: LGP:prop}\mbox{}
\begin{compactenum}
   \item Pointwise multiplication and inversion induce a Fr\'{e}chet Lie group structure on $C^\infty (I,G)$. 
   Its Lie algebra is $C^\infty (I,\LieA)$ and we let $e_{\LieG} \colon I \rightarrow \LieG$ be its identity.
   \item The set $\mathrm{Imm}(I,\LieG)$ is open in $C^\infty (I,\LieG)$ and for an open subset $U \subseteq \LieG$, the set $C^\infty (I,U) := \{ f \in C^\infty (I,\LieG) \mid f(I) \subseteq U\}$ is open.
   \item By \cite[1.9]{glockner_regularity_2012}, $C_{*}^{\infty}(I, \LieG):=\lbrace c \in C^\infty(I, \LieG): c(0) = e \rbrace$ is a closed Lie subgroup with Lie algebra $\{f \in C^\infty (I,\LieA) : f(0)=0\}$.\footnote{Here we have used that $\varphi_{e_G} = \psi_{*} \colon C^\infty (I,V_\psi) \rightarrow C^\infty (I,W_\psi), f \mapsto \psi\circ f$, where $C^\infty (I,W_\psi) := \{f \in C^\infty (I,\LieA) \mid f(I)\subseteq W_\psi\}$ is open in $C^\infty (I,\LieA)$. The Lie group structure from 1. coincides with the one defined in \cite{glockner_regularity_2012}.}    
   \item Using the bijection $(\cdot)^\wedge \colon C^\infty (\mathbb{R}, C^\infty (I,G)) \rightarrow C^\infty (\mathbb{R} \times I, G), \quad \gamma^\wedge (\varepsilon,t) := \gamma(\varepsilon) (t)$ one identifies the tangent bundle $TC^\infty (I,G)$ with $C^\infty (I,TG)$. 
   Fibrewise this isomorphism is given by  
    \begin{displaymath}
     T_\alpha C^\infty (I,\LieG) \ni \left(\left.\frac{\mathrm{d}}{\mathrm{d}\varepsilon}\right|_{\varepsilon =0}\gamma \right) \longmapsto \left(\left.\frac{\partial}{\partial \varepsilon}\right|_{\varepsilon = 0} \gamma^\wedge (\varepsilon, \cdot) \right) \in C^\infty_\alpha (I,TG)
    \end{displaymath}
    where $\gamma \colon \mathbb{R} \rightarrow C^\infty (I,\LieG)$ is smooth, with $\gamma (0) = \alpha$.  
 \end{compactenum}
\end{proposition}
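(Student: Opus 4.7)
The plan is to prove each of the four assertions using the chart $\varphi_\alpha$ together with the exponential law (Cartesian closedness) for manifolds of smooth mappings, which states that a map $f \colon N \to C^\infty(I,M)$ is smooth if and only if its adjoint $f^\wedge \colon N \times I \to M$ is smooth. All four statements reduce, after unwinding the chart construction, to smoothness questions about maps built by pointwise composition with smooth maps on $G$.

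For part 1, I would verify that pointwise multiplication $\mu \colon C^\infty(I,G) \times C^\infty(I,G) \to C^\infty(I,G)$ and inversion $\iota$ are smooth. By the exponential law, smoothness of $\mu$ is equivalent to smoothness of $\mu^\wedge(f,g,t) = f(t) \cdot g(t)$, which is immediate since the multiplication of $G$ is smooth and evaluation on a compact interval is smooth. Similarly for inversion. Group axioms hold pointwise. To compute the Lie algebra, I would work in the chart $\varphi_{e_G}$ at the constant identity curve $e_G$; conjugation becomes pointwise conjugation in $G$, so the bracket on $T_{e_G} C^\infty(I,G) \cong C^\infty(I,\mathfrak{g})$ is pointwise, giving the stated Lie algebra.

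For part 2, openness of $C^\infty(I,U)$ follows from compactness of $I$: if $f(I) \subset U$, the image $f(I)$ is a compact subset of $U$, and the basic neighbourhood in the compact-open $C^0$-topology consisting of curves staying inside a suitable open set with $f(I) \subset V \subset \overline{V} \subset U$ is contained in $C^\infty(I,U)$; refinement by the $C^\infty$-topology preserves openness. For $\mathrm{Imm}(I,G)$, the immersion condition $\dot f(t) \neq 0$ for all $t \in I$ is a $C^1$-open condition on the compact set $I$; a standard compactness argument produces a neighbourhood in the compact-open $C^1$-topology, and hence in the $C^\infty$-topology.

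For part 3, I would realise $C^\infty_*(I,G)$ as the kernel of the evaluation homomorphism $\mathrm{ev}_0 \colon C^\infty(I,G) \to G$, $f \mapsto f(0)$. This map is a group homomorphism (obvious) and smooth (its adjoint $(f,*) \mapsto f(0)$ is smooth). Its differential at $e_G$ is evaluation $C^\infty(I,\mathfrak{g}) \to \mathfrak{g}$ at $0$, which admits a continuous linear section (e.g.\ constant curves), so the kernel splits as a topological direct summand. Invoking the Glöckner result cited from \cite{glockner_regularity_2012}, this suffices to make the kernel a closed split Lie subgroup with the claimed Lie algebra $\{f \in C^\infty(I,\mathfrak{g}) : f(0)=0\}$.

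For part 4, the identification $TC^\infty(I,G) \cong C^\infty(I,TG)$ is the infinitesimal form of the exponential law. A tangent vector at $\alpha$ is represented by a smooth curve $\gamma \colon \mathbb{R} \to C^\infty(I,G)$ with $\gamma(0) = \alpha$; its adjoint $\gamma^\wedge \colon \mathbb{R} \times I \to G$ is smooth, so $\partial_\varepsilon|_{\varepsilon=0} \gamma^\wedge(\varepsilon, \cdot)$ is a smooth section of $TG$ along $\alpha$, i.e.\ an element of $C^\infty_\alpha(I,TG)$. Independence of the representative $\gamma$ and bijectivity follow by inspecting the construction in the chart $\varphi_\alpha$, where both sides become the same locally convex space. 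The main obstacle across all four parts is the technical management of calculus on the non-open domain $I=[a,b]$ and the interplay between Bastiani and convenient calculus, but the footnote in the excerpt indicates these subtleties are absorbed by the cited framework, so no new calculus arguments are needed.
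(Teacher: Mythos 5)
The paper does not actually prove this proposition: it is quoted from the literature (41.10 and Theorem~42.13 of \cite{kriegl97tcs}, Theorem~7.9 of \cite{MR3351079}, and 1.9 of \cite{glockner_regularity_2012} for part~3), so your proposal necessarily differs by reconstructing the proofs. Your reconstruction follows essentially the standard arguments of those references and is correct in outline: the exponential law (together with smoothness of the evaluation map $C^\infty(I,G)\times I\to G$) reduces smoothness of the pointwise group operations and the tangent-bundle identification to smoothness of maps on $G$, and openness of $C^\infty(I,U)$ and of $\mathrm{Imm}(I,G)$ follows from compactness of $I$, both conditions being open in the compact-open $C^0$- resp.\ $C^1$-topology, which the compact-open $C^\infty$-topology refines. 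Two points deserve more care than you give them. First, the exponential law and the smoothness of evaluation on the non-open domain $I=[a,b]$, for a possibly infinite-dimensional Hilbert target, are the substantive inputs here; they are not free, and establishing them is exactly what the cited sources do, so your "proof" is really a reduction to the same references the paper quotes. Second, in part~3 a continuous linear splitting of the differential of $\mathrm{ev}_0$ is not by itself sufficient in the Fr\'echet setting to conclude that the kernel is a closed Lie subgroup, since there is no implicit function theorem available; what makes the argument work is that $\mathrm{ev}_0$ admits a smooth \emph{group-homomorphic} section $g\mapsto \mathrm{const}_g$, exhibiting $C^\infty(I,G)$ as a semidirect product of $C^\infty_*(I,G)$ with $G$ --- this is precisely the content of \cite[1.9]{glockner_regularity_2012}, which you do invoke, so the weakness lies only in your stated justification, not in the conclusion.
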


\subsection{Shape space and distance functions}
We want to measure distances between two shapes, i.e., unparametrized curves on $\LieG$.
To do this, one represents unparametrized curves as equivalence classes of parametrized curves under certain reparametrizations.

We will model parametrized curves in $\LieG$ as immersions (i.e., smooth functions with a nonvanishing first derivative) and denote the space of these curves by $\mathcal{P} := \text{Imm}(I, \LieG)$.
Then one can define the \emph{shape space} $\mathcal{S}$ as the quotient space
\[
    \mathcal{S} := \PSpace/\Diffeom,
\] where $\Diffeom$ denotes the group of orientation preserving diffeomorphisms on $I$, acting on $\PSpace$ from the right \cite{bauer_overview_2014}.
In the context of parametrized curves, $\Diffeom$ can be thought of as the group of all possible orientation preserving parametrizations of a curve.

Our goal is to find a distance function $d_\SSpace$ on $\SSpace$.
To this end, we consider an appropriate distance function on $\PSpace$.
The distance of two parametrized curves will be measured as the infimum of the length of piecewise smooth curves connecting these curves (i.e.\ the geodesic distance of a Riemannian metric).  
Note that in general, the distance will only be a pseudometric\footnote{A pseudometric $d \colon \PSpace \times \PSpace \rightarrow \mathbb{R} \cup \{\infty\}$ satisfies all axioms of a metric but might fail to distinguish different points, i.e. in general $d(x,y)=0$ will not imply $x=y$.}, see Remark \ref{rem: pseudometric} below.
However, the geodesic distance considered in our approach will turn out to be a metric (on suitable submanifolds).
To assure that the distance descends to a distance function on $\SSpace$, the pseudometric needs to satisfies the following invariance property.

\begin{definition}
 Let $d_\PSpace \colon \PSpace \times \PSpace \rightarrow [ 0 , \infty [$ be a pseudometric. 
 Then $d_\PSpace$ is called \emph{reparametrization invariant} if
 \begin{equation}\label{Eq:ReparametrizationInvariance}
        d_\PSpace(c_0, c_1) = d_\PSpace(c_0 \circ \varphi, c_1 \circ \varphi) \quad \forall \varphi \in \Diffeom.
\end{equation}
In other words $d_\PSpace$ is invariant with respect to the diagonal (right) action of $\Diffeom$ on $\PSpace \times \PSpace$.
\end{definition}

Let $[c_0], [c_1] \in \SSpace$ be equivalence classes and pick arbitrary representatives $c_0 \in [c_0]$ and $c_1 \in [c_1]$.
This allows us to define a pseudometric $d_\SSpace$ as
\begin{equation}\label{Eq:DistanceShape}
    d_\mathcal{S}([c_0], [c_1]) := \inf_{\varphi \in \Diffeom} d_\mathcal{P} (c_0, c_1 \circ \varphi).
\end{equation}

Then we obtain the following result.

\begin{lemma}\label{lem:RepresentationIndependence}
    If $d_\PSpace$ is a reparametrization invariant pseudometric on $\PSpace$,
    then the pseudometric $d_\SSpace$ on $\SSpace$, as defined in Equation \eqref{Eq:DistanceShape},
    is independent of the choice of representatives $c_0$ and $c_1$.
\end{lemma}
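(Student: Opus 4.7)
The plan is to show that any two choices of representatives give rise to the same infimum by using the reparametrization invariance of $d_\PSpace$ together with the group structure of $\Diffeom$. Concretely, fix representatives $c_0, c_1$ of $[c_0], [c_1]$ respectively, and let $c_0' \in [c_0]$, $c_1' \in [c_1]$ be any other representatives. By definition of the equivalence relation there exist $\varphi_0, \varphi_1 \in \Diffeom$ with $c_0' = c_0 \circ \varphi_0$ and $c_1' = c_1 \circ \varphi_1$. The goal is to prove
\[
\inf_{\varphi \in \Diffeom} d_\PSpace(c_0, c_1 \circ \varphi) \;=\; \inf_{\varphi \in \Diffeom} d_\PSpace(c_0', c_1' \circ \varphi).
\]

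For the main computation, I would take an arbitrary $\varphi \in \Diffeom$ and apply the invariance \eqref{Eq:ReparametrizationInvariance} with the diffeomorphism $\varphi_0^{-1}$ to obtain
\[
d_\PSpace(c_0', c_1' \circ \varphi) \;=\; d_\PSpace\bigl(c_0 \circ \varphi_0 \circ \varphi_0^{-1},\, c_1 \circ \varphi_1 \circ \varphi \circ \varphi_0^{-1}\bigr) \;=\; d_\PSpace\bigl(c_0,\, c_1 \circ (\varphi_1 \circ \varphi \circ \varphi_0^{-1})\bigr).
\]
Setting $\psi := \varphi_1 \circ \varphi \circ \varphi_0^{-1}$, the right-hand side is of the form $d_\PSpace(c_0, c_1 \circ \psi)$ with $\psi \in \Diffeom$.

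The decisive structural point is that the map $\varphi \mapsto \varphi_1 \circ \varphi \circ \varphi_0^{-1}$ is a bijection of $\Diffeom$ onto itself, since $\Diffeom$ is a group under composition. Consequently, as $\varphi$ ranges over $\Diffeom$, so does $\psi$, and the two infima are taken over the same set of values. Taking the infimum over $\varphi$ therefore yields the desired equality. Since $c_0', c_1'$ were arbitrary representatives, the definition of $d_\SSpace$ in \eqref{Eq:DistanceShape} is independent of the chosen representatives.

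The argument is almost purely formal; there is no genuine obstacle. The only ingredient beyond the definitions is the fact that $\Diffeom$ is closed under composition and inversion, which is immediate. Reparametrization invariance is applied only once, with the single diffeomorphism $\varphi_0^{-1}$, after which the change of variables in the infimum does the rest of the work.
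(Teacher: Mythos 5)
Your proof is correct and follows essentially the same route as the paper: apply reparametrization invariance with $\varphi_0^{-1}$ to normalize the first argument back to $c_0$, then absorb the resulting diffeomorphism into the infimum by a change of variables using the group structure of $\Diffeom$. The only cosmetic difference is that you treat both representatives in a single change of variables $\varphi \mapsto \varphi_1 \circ \varphi \circ \varphi_0^{-1}$, whereas the paper first notes that independence of the $c_1$-representative is automatic from the infimum and then handles the $c_0$-representative separately.
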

\begin{proof}
   Due to the infimum over $\varphi$, the distance $d_\SSpace$ is independent of the choice of the representative $c_1$.
   With respect to the choice of representative $c_0$, we easily find:
\begin{align*}
    d_\mathcal{S}([c_0 \circ \psi], [c_1]) &= \inf_{\varphi \in \Diffeom} d_\mathcal{P} (c_0 \circ \psi, c_1 \circ \varphi)\\
    &= \inf_{\varphi \in \Diffeom} d_\mathcal{P} (c_0 \circ \psi \circ \psi^{-1}, c_1 \circ \varphi \circ \psi^{-1})\\
    &= \inf_{\gamma \in \Diffeom} d_\mathcal{P} (c_0, c_1 \circ \gamma)\\
   &= d_\mathcal{S}([c_0], [c_1]).\qedhere
\end{align*}
\end{proof}

\begin{remark}\label{rem: pseudometric}
 Contrary to the finite-dimensional case, the geodesic distance of an infinite-dimensional Riemannian manifold might vanish.
 For example, it is well known that the geodesic distance with respect to the standard $L^2$-metric on $\mathcal{P}$ vanishes everywhere.
 This is the reason why we formulated the above results using pseudometrics.
 
 However, it is also known that this pathology does not occur for the class of first order Sobolev metrics.
 We refer to \cite{MR3444354} for more information and references.  
\end{remark}

Next we will show how to obtain a reparametrization invariant geodesic distance function from a certain first order Sobolev metric.
Thus its geodesic distance will turn out to be metric (on certain submanifolds of $\PSpace$).

\subsection{SRV transform for curves on a Lie group}

The main idea is to construct a well behaved mapping which allows us to pull the $L^2$ inner product on $C^\infty (I,\LieA)$ back to a Riemannian metric on a suitable submanifold of $\PSpace$.
Inspired by the approach in \cite{srivastava_shape_2011,su_statistical_2014, su_rate-invariant_2014}, we define the map 
\begin{equation}\label{Eq:SRVT}
 \begin{aligned}
{\SRVT :\text{Imm}(I,G)\rightarrow \{ q\in C^{\infty}(I,\mathfrak{g})}\, |\, q(t) \neq 0 \quad \forall t \in I \} = C^\infty (I,\LieA \setminus \{0\}),\\
    q(t)=\SRVT(c)(t):=\frac{\RightTrans_{c(t)*}^{-1}(\dot{c}(t))}{\sqrt{\|\dot{c}(t)\|}}
    \end{aligned}
\end{equation}
where the norm $\| \cdot \|$ is induced by a right invariant metric on $G$.
In particular, this entails that the norm on $\LieA$ is induced by an inner product $\langle \cdot , \cdot \rangle$ on $\LieA$.
Note that $\SRVT$ is not injective since we lose information regarding the curve's starting point.
For $\LieG = \mathbb{R}^d$ this map is known as the \emph{Square Root Velocity Transform} (SRVT) \cite{srivastava_shape_2011}. 
Hence we will also call $\SRVT$ the \emph{square root velocity transform} if $\mathcal{P}$ is a space of mappings with values in an arbitrary Hilbert Lie group.

Let us first note some properties of the SRVT.

\begin{lemma}\label{lem: SRVT:prop}
 The SRVT $\SRVT$ is 
 \begin{compactenum}
  \item \emph{equivariant with respect to reparametrizations}, i.e.\ $\forall (c,\varphi) \in \PSpace \times \Diffeom$ we have $\SRVT (c\circ \varphi) = \SRVT(c) \circ \varphi \cdot \sqrt{\dot{\varphi}}$,
  \item \emph{translation invariant}, i.e.\ $\forall g \in \LieG$ and $c \in \PSpace$ we have $\SRVT (R_g \circ c) = \SRVT (c)$.
 \end{compactenum}
\end{lemma}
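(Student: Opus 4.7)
The plan is to verify both properties by direct computation from the definition of $\SRVT$ in \eqref{Eq:SRVT}, treating the numerator (right-trivialized velocity) and the denominator (norm of velocity) separately. Since everything is pointwise in $t$, no functional-analytic subtleties appear, and the proof reduces to an application of the chain rule combined with the functorial properties of the right translation and the right invariance of the chosen metric.

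For part (1), I would compute $\frac{\mathrm{d}}{\mathrm{d}t}(c\circ\varphi)(t) = \dot\varphi(t)\,\dot c(\varphi(t))$ via the ordinary chain rule, use linearity of $\RightTrans_{c(\varphi(t))*}^{-1}$ in the scalar factor $\dot\varphi(t)$, and observe that the denominator picks up a factor $\sqrt{\dot\varphi(t)}$ because $\varphi$ is orientation preserving (so $\dot\varphi(t)>0$ and the absolute value can be dropped). Dividing the two gives a single factor of $\sqrt{\dot\varphi(t)}$ times $\SRVT(c)(\varphi(t))$, which is exactly the asserted identity $\SRVT(c\circ\varphi)=\SRVT(c)\circ\varphi\cdot\sqrt{\dot\varphi}$.

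For part (2), let $\tilde c := \RightTrans_g \circ c$, so $\tilde c(t)=c(t)\cdot g$. The key identity is that $\RightTrans_{a\cdot b} = \RightTrans_b \circ \RightTrans_a$, hence $\RightTrans_{\tilde c(t)*} = \RightTrans_{g*}\circ \RightTrans_{c(t)*}$ and $\RightTrans_{\tilde c(t)*}^{-1} = \RightTrans_{c(t)*}^{-1}\circ \RightTrans_{g*}^{-1}$. Since $\dot{\tilde c}(t) = \RightTrans_{g*}(\dot c(t))$, the two $\RightTrans_{g*}$'s in the numerator cancel and yield $\RightTrans_{c(t)*}^{-1}(\dot c(t))$. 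For the denominator, one invokes the assumed right invariance of the metric on $G$, which gives $\|\RightTrans_{g*}(\dot c(t))\| = \|\dot c(t)\|$. Combining, $\SRVT(\RightTrans_g\circ c)(t)=\SRVT(c)(t)$.

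There is no real obstacle here; the only points that need to be highlighted for clarity are the orientation-preserving hypothesis on $\varphi$ (which lets $\sqrt{\dot\varphi}$ make sense as a smooth positive function, and lets us identify $|\dot\varphi|$ with $\dot\varphi$) and the right invariance of the metric (without which the denominator would not simplify in part (2)). Both properties are immediate consequences of the definitions once these remarks are in place.
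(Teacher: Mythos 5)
Your proof is correct and follows essentially the same route as the paper: part (1) is the same chain-rule computation exploiting linearity of $\RightTrans_{c(\varphi(t))*}^{-1}$ and positivity of $\dot\varphi$, and part (2) rests on the identity $\RightTrans_{c(t)g*}^{-1}(\RightTrans_{g*}\dot c(t))=\RightTrans_{c(t)*}^{-1}(\dot c(t))$, which you verify directly from $\RightTrans_{ab}=\RightTrans_b\circ \RightTrans_a$ while the paper cites it from the literature. Your explicit mention of the right invariance of the metric for the denominator in part (2) is a worthwhile clarification that the paper leaves implicit.
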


\begin{proof}
 \begin{compactenum}
  \item A straight forward computation yields
\[
    \SRVT(c \circ \varphi)(t) = \frac{\RightTrans_{c(\varphi(t))*}^{-1}(\dot{c}(\varphi(t))\dot{\varphi}(t))}{\sqrt{\|\dot{c}(\varphi(t))\dot{\varphi}(t)\|}} = \sqrt{\dot{\varphi}(t)} \ \frac{\RightTrans_{c(\varphi(t))*}^{-1}(\dot{c}(\varphi(t)))}{\sqrt{\|\dot{c}(\varphi(t))\|}}.
\]
 \item This follows from $\RightTrans_{c(t)g*}^{-1}(R_{g*}\dot{c}(t)) = \RightTrans_{c(t)*}^{-1}(\dot{c}(t))$ (apply \cite[38.1 Lemma]{kriegl97tcs} to the constant map $t \mapsto g$) and the definition of $\SRVT$.\qedhere
 \end{compactenum}
\end{proof}

\begin{definition}\label{defn: pseudodistance}
 Define a pseudometric on $\PSpace = \mathrm{Imm} (I,\LieG)$ via
 \begin{equation}\label{Eq:L2pseudoDistance}
    d_{\PSpace}(c_0, c_1) := \sqrt{\int_I \| q_0(t) - q_1(t) \|^2 \dif t} = d_{L^2}( \SRVT(c_0), \SRVT(c_1)),
\end{equation} where $q_i := \SRVT(c_i),\, i=0,1$.
\end{definition}

Notice that $d_{\PSpace}$ is only a pseudometric as it does not distinguish between $c\in \PSpace$ and $R_g \circ c$ (for $g\in \LieG$ by Lemma \ref{lem: SRVT:prop}). 

\begin{proposition}
    The pseudometric $d_{\PSpace}$ from Definition \ref{defn: pseudodistance} is reparametrization invariant.
\end{proposition}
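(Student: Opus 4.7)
The plan is to verify the invariance directly from Definition \ref{defn: pseudodistance} by exploiting the equivariance property of the SRVT established in Lemma \ref{lem: SRVT:prop}(1), combined with the change-of-variables formula for the integral over $I$.

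First, I would fix $\varphi \in \Diffeom$ and $c_0,c_1 \in \PSpace$ and apply Lemma \ref{lem: SRVT:prop}(1) to each $c_i$ to get $\SRVT(c_i \circ \varphi)(t) = \sqrt{\dot{\varphi}(t)}\,\SRVT(c_i)(\varphi(t))$. The crucial point is that the scalar factor $\sqrt{\dot{\varphi}(t)}$ is the same for $i=0,1$, so it factors out of the difference in the $L^2$-integrand. Writing $q_i = \SRVT(c_i)$, the squared pseudodistance becomes
\begin{equation*}
d_\PSpace(c_0 \circ \varphi, c_1 \circ \varphi)^2 = \int_I \bigl\| \sqrt{\dot{\varphi}(t)}\,(q_0(\varphi(t)) - q_1(\varphi(t)))\bigr\|^2 \dif t = \int_I \|q_0(\varphi(t)) - q_1(\varphi(t))\|^2 \,\dot{\varphi}(t)\,\dif t.
\end{equation*}

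Next, I would perform the substitution $s = \varphi(t)$. Since $\varphi$ is an orientation-preserving diffeomorphism of $I$, it maps $I$ bijectively onto itself with $\dot{\varphi}>0$, and $\dif s = \dot{\varphi}(t)\,\dif t$. This transforms the integral into $\int_I \|q_0(s) - q_1(s)\|^2 \dif s$, which is exactly $d_\PSpace(c_0,c_1)^2$. Taking square roots yields \eqref{Eq:ReparametrizationInvariance}.

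There is really no serious obstacle here: the equivariance lemma has already done the non-trivial work by producing the factor $\sqrt{\dot{\varphi}}$, which is precisely the Jacobian factor needed to absorb the change of variables. The only point worth being a little careful about is that $\varphi$ is indeed a bijection of $I$ onto itself with positive derivative, so that the substitution is valid and preserves the domain of integration; this is automatic from the definition of $\Diffeom$ as orientation-preserving diffeomorphisms of $I$. No further structure of $\LieG$ or of the inner product on $\LieA$ is needed beyond homogeneity of the norm, which is used in pulling $\sqrt{\dot{\varphi}(t)}$ outside the norm.
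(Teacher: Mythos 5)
Your proof is correct and follows essentially the same route as the paper: apply the reparametrization equivariance from Lemma \ref{lem: SRVT:prop} to pull out the common factor $\sqrt{\dot{\varphi}}$, use homogeneity of the norm, and absorb the resulting Jacobian via the substitution $s=\varphi(t)$. No gaps.
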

\begin{proof}
In order to prove that $d_{\PSpace}$ is reparametrization invariant, we need to show that the property \eqref{Eq:ReparametrizationInvariance} holds.

Using the definition \eqref{Eq:L2pseudoDistance} and the substitution $s := \varphi(t)$, the reparametrization equivariance (see Lemma \ref{lem: SRVT:prop}) implies:
\begin{align*}
    &d_{\PSpace}(c_0 \circ \varphi, c_1 \circ \varphi) =\\
    &=\left( \, \bigintsss_I \, \norm{\sqrt{\dot{\varphi}(t)} \left( \frac{\RightTrans_{c_0(\varphi(t))*}^{-1}(\dot{c_0}(\varphi(t)))}{\sqrt{\|\dot{c_0}(\varphi(t))\|}} - \ \frac{\RightTrans_{c_1(\varphi(t))*}^{-1}(\dot{c_1}(\varphi(t)))}{\sqrt{\|\dot{c_1}(\varphi(t))\|}} \right)}^2 \dif t \, \right)^{1/2}\\
    &= \left(\, \bigintsss_I \, \norm{ \left( \frac{\RightTrans_{c_0(s)*}^{-1}(\dot{c_0}(s))}{\sqrt{\|\dot{c_0}(s)\|}} - \frac{\RightTrans_{c_1(s)*}^{-1}(\dot{c_1}(s))}{\sqrt{\|\dot{c_1}(s)\|}}\right) }^2\dif s\, \right)^{1/2}\\
    &= d_{\PSpace}(c_0, c_1).\qedhere
\end{align*}
\end{proof}

We would like to realize $d_{\PSpace}$ as the geodesic distance of a Riemannian metric on $\PSpace$ which arises by pullback with $\SRVT$.
In the classical case, of the abelian Lie group $\mathbb{R}^2$, the map $\SRVT$ reduces to the SRVT considered in \cite{bauer_constructing_2014}.
As was observed in \cite[3.4 and Remark 3.9]{bauer_constructing_2014}, the map $\SRVT$ is not even infinitesimally injective, i.e.\ the kernel of the tangent map $T_c \SRVT$ is not trivial.
Hence, pulling back the $L^2$-metric will not result in a Riemannian metric on all of $\PSpace$. 
However, the map $\SRVT$ restricts to a diffeomorphism onto a certain submanifold of $\PSpace$.
Further, on this submanifold $d_{\PSpace}$ will coincide with the geodesic distance induced by the pullback of the $L^2$-metric.  
Before we prove this, let us first introduce some auxiliary mappings and an explicit formula for the inverse of the SRVT.
Consider the scaling maps 
  \begin{equation} \label{eq: scaling}\begin{aligned}
                     \mathrm{sc} \colon C^\infty (I,\mathfrak{g} \setminus \{0\}) &\rightarrow C^\infty (I,\mathfrak{g}\setminus \{0\}),\quad   q \mapsto \left(t \mapsto \frac{q(t)}{\sqrt{\|q(t)\|}}\right)  \\
                     \mathrm{sc}^{-1} \colon C^\infty (I,\mathfrak{g} \setminus \{ 0\})&\rightarrow C^\infty (I,\mathfrak{g} \setminus \{ 0\}),\quad q \mapsto (t \mapsto q(t) \| q(t)\|)
                   \end{aligned}
  \end{equation}
Then it is easy to see that $\SRVT (c) = \mathrm{sc} \circ \delta^r (c)$ for all $c \in \PSpace$.
We will see in Lemma \ref{lem: ind:diffeo} that $\SRVT$ induces a diffeomorphism $\PSpace \cap C^\infty_* (I,\LieG) \rightarrow C^\infty (I,\LieA\setminus \{0\})$.
By abuse of notation we write $\SRVTInv := \left(\left.\SRVT\right|_{\PSpace \cap C^\infty_* (I,\LieG)}\right)^{-1}$.
Before we prove that $\SRVTInv$ is smooth and provide a formula, consider the set 
\begin{displaymath}
\PSpace_* = \{ c \in \mathrm{Imm}(I, \LieG): \, c( 0 ) = e \}.
\end{displaymath}
Now $\PSpace_* = \PSpace \cap C^\infty_* (I,\LieG)$ and by \cite[1.10]{glockner_regularity_2012} $\PSpace_*$ is a closed submanifold of $\PSpace$.
Note that $\SRVTInv$ is the inverse of the SRVT when restricted to the submanifold $\PSpace_*$.

In the following, we restrict our investigation to $\PSpace_*$, i.e.\ we consider only curves starting at the identity element in $\LieG$.
We remark that this is only a mild restriction as the group operations in $\LieG$ allow us to transport any smooth curve $c \colon I \rightarrow \LieG$ to the smooth curve $c(t)\cdot c(0)^{-1}$ starting at the identity.

\begin{lemma}\label{lem: ind:diffeo}
 The scaling maps \eqref{eq: scaling} are smooth diffeomorphisms.
 Moreover, the SRVT is smooth and induces a diffeomorphism $\PSpace_*\rightarrow C^\infty (I,\LieA \setminus \{ 0 \})$ with inverse
\begin{align}\label{Eq:SRVTInv}
\begin{split}
    &\SRVTInv \colon C^\infty(I, \LieA \setminus \{0\}) \rightarrow C_*^\infty(I, \LieG)\\   
    &\SRVTInv(q)(t) = \Gamma \circ \mathrm{sc}^{-1}  (q) = c(t),\quad \text{where} \,\, \frac{\partial c}{\partial t}=\RightTrans_{c(t)*}(q(t)\|q\|),\,\, c(0)=e.
\end{split}
\end{align}.
\end{lemma}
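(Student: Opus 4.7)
The strategy is to establish the lemma by decomposing $\SRVT = \mathrm{sc} \circ \delta^r$, handling the scaling and evolution factors separately, and then reading the inverse off from that factorization.

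First, for the scaling maps. The pointwise map $\sigma \colon \LieA \setminus \{0\} \to \LieA \setminus \{0\}$, $v \mapsto v/\sqrt{\|v\|}$, is smooth, since $\|v\|^2 = \langle v, v\rangle$ is smooth in $v$ and $\sqrt{\cdot}$ is smooth on the positive reals; a direct computation shows that $\tau(v) := v\|v\|$ is its pointwise inverse and is likewise smooth. By Proposition \ref{prop: LGP:prop}(2), $C^\infty(I, \LieA \setminus \{0\})$ is open in $C^\infty(I, \LieA)$, and $\mathrm{sc}$ and $\mathrm{sc}^{-1}$ are precisely the pushforwards (post-compositions) by $\sigma$ and $\tau$. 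Since pushforward by a smooth map is smooth on the corresponding smooth function spaces (the $\Omega$-lemma, cf.\ \cite[Section 42]{kriegl97tcs}), and is functorial, the maps $\mathrm{sc}$ and $\mathrm{sc}^{-1}$ are mutually inverse smooth diffeomorphisms.

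Next, I would handle the evolution operator restricted to immersions. For the Hilbert Lie group $G$, the evolution $\Gamma$ is a smooth diffeomorphism $C^\infty(I, \LieA) \to C^\infty_*(I, G)$ with smooth inverse $\delta^r$ (regularity of $G$ in the sense of \cite{glockner_regularity_2012}). For $c \in C^\infty_*(I, G)$ the linear map $\RightTrans_{c(t)*}^{-1}$ is an isomorphism, so $\delta^r(c)(t) \neq 0$ if and only if $\dot{c}(t) \neq 0$; hence $\delta^r$ carries $\PSpace_*$ bijectively onto $C^\infty(I, \LieA \setminus \{0\})$. Both sets are open submanifolds of their ambient spaces, so this restriction is a smooth diffeomorphism whose inverse is $\Gamma$ restricted to $C^\infty(I, \LieA \setminus \{0\})$.

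Finally, from the definition one has $\SRVT(c) = \mathrm{sc}(\delta^r(c))$, so $\SRVT|_{\PSpace_*}$ is the composition of the two smooth diffeomorphisms just constructed, hence itself a smooth diffeomorphism onto $C^\infty(I, \LieA \setminus \{0\})$ with inverse $\SRVTInv = \Gamma \circ \mathrm{sc}^{-1}$. Writing $c = \SRVTInv(q)$ and unpacking the defining ODE of $\Gamma$ applied to $\mathrm{sc}^{-1}(q)(t) = q(t)\|q(t)\|$ yields the formula in the statement. The only non-routine step is the $\Omega$-lemma used in the first paragraph, which must be invoked for maps defined only on open subsets of $\LieA$ rather than on all of $\LieA$; this is available in the Fr\'{e}chet framework already used in Section~\ref{Sec:ShapeAnalysis}, so the proof is an assembly of existing tools rather than a development of new machinery.
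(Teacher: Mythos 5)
Your proposal is correct and follows essentially the same route as the paper: factor $\SRVT = \mathrm{sc}\circ\delta^r$, get smoothness of the scaling maps from smoothness of the norm away from $0$ together with the $\Omega$-lemma (the paper cites \cite[Theorem 42.13]{kriegl97tcs}), and get that $\Gamma$ and $\delta^r$ are mutually inverse diffeomorphisms from regularity via \cite[Theorem A]{glockner_regularity_2012}. Your explicit check that $\delta^r$ maps $\PSpace_*$ onto $C^\infty(I,\LieA\setminus\{0\})$ (since $\RightTrans_{c(t)*}$ is a linear isomorphism) is left implicit in the paper but is a welcome addition.
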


\begin{proof}
Since the norm $\|\cdot\|$ is induced by an inner product, it is smooth away from $0$ (cf. the discussion in \cite[Proposition 13.14]{kriegl97tcs}).
 In particular, this entails that the scaling maps \eqref{eq: scaling} are smooth by \cite[Theorem 42.13]{kriegl97tcs}. 
 
 To see that $\SRVT$ is a diffeomorphism, observe that $\Gamma \colon C^{\infty}(I,\mathfrak{g}) \rightarrow C_{*}^{\infty}(I,G)$  is a (smooth) diffeomorphism by \cite[Theorem A]{glockner_regularity_2012}.
 Its inverse is the right logarithmic derivative $\delta^r \colon C^\infty_* (I,\LieG) \rightarrow C^\infty (I,\LieA)$.
 Hence $\SRVT = \mathrm{sc}^{-1} \circ \delta^r|_{\mathrm{Imm} (I,\LieG) \cap C_{*}^{\infty}(I,G)}$ is a diffeomorphism whose inverse clearly is $\SRVTInv$.
 As $\SRVTInv = \Gamma \circ \mathrm{sc}$ is also smooth, the assertion follows.
\end{proof}

The right action $\PSpace \times \Diffeom \rightarrow \PSpace, \quad (c,\varphi) \mapsto c \circ \varphi$ restricts to an action on $\PSpace_*$. 
We can now define a reparametrization invariant (pseudo)metric\footnote{We will see later see Theorem \ref{thm: pbmetric} and Theorem \ref{thm: globaldist} that the pseudometric is the geodesic distance with respect to a first order Sobolev metric if dim $\LieA > 2$. Hence in these case we actually obtain a metric on $\PSpace_*$.} on $\PSpace_*$ as follows.
\begin{definition}
 Restrict $d_\PSpace$ to a pseudometric $d_{\PSpace_*}$ on $\PSpace_*$, i.e.\ from \eqref{Eq:L2pseudoDistance} we derive
 \begin{equation}\label{Eq:L2Distance}
    d_{\PSpace_*}(c_0, c_1) := \sqrt{\int_I \| q_0(t) - q_1(t) \|^2 \dif t} = d_{L^2}( \SRVT(c_0), \SRVT(c_1)),
\end{equation} where $q_i := \SRVT(c_i),\, i=0,1$.
\end{definition}


\subsection{Riemannian geometry of the SRV transform}
There is a geometric interpretation of the distance function $d_{\PSpace_*}$ obtained via the SRVT that motivates our choice of approach. 
In the present section we explore this interpretation in the context of Riemannian geometry on spaces of curves.

Consider the space of curves $C^\infty (I,\LieA)$ with the $L^2$-metric 
  \begin{displaymath}
   \langle h,g \rangle_{L^2} := \int_I \langle h(t),g(t)\rangle \dif t.
  \end{displaymath}
This metric defines a (weak) Riemannian metric\footnote{Here the term ``weak'' means that the Riemannian metric does not determine the topology on $T_c C^\infty (I, \LieA) = C^\infty (I, \LieA)$.} on $C^\infty (I, \LieA)$.
Moreover, since the image of the SRVT is an open subset of $C^\infty (I,\LieA)$, the Riemannian metric restricts to a Riemannian metric on the image of the SRVT. 
Now we exploit that the SRVT is a diffeomorphism (Lemma \ref{lem: ind:diffeo}), to obtain a pullback metric on $\PSpace_*$ whose geodesic distance will turn out to be $d_{\PSpace_*}$.
Before we prove this, let us derive a formula for the pullback metric.

Recall that $\delta^r c  := R_{c^*}^{-1} (\dot{c})$ is the right logarithmic derivative whose tangent map at $c$ we denote by $T_c \delta^r$.

\begin{theorem}\label{thm: pbmetric}
 Let $c \in \PSpace_*$ and consider $v,w \in T_c \PSpace_*$, i.e.\ $v ,w \colon I \rightarrow TG$ are curves with $v(t),w(t) \in T_{c(t)}\LieG$.
 The pullback of the $L^2$-metric on $C^\infty(I, \LieA \setminus \{0\})$ under the SRVT to the manifold of immersions $\PSpace_*$ is given by: 
  \begin{equation}\label{Eq:ElasticMetric} \begin{aligned}
   G_c (v,w) = \int_I \frac{1}{4} &\left\langle D_s v, u_c\right\rangle \left\langle D_s w, u_c\right\rangle  \\ &+ \left\langle D_s v-u_c \left\langle D_s v,u_c \right\rangle , D_s w-u_c\left\langle D_s w,u_c \right\rangle \right\rangle  \dif s,
   \end{aligned}
  \end{equation}
  where $D_s v := T_c \delta^r (v)/ \norm{ \dot{c} }$, $u_c := \delta^r(c)/\norm{\delta^r (c)}$ is the unit tangent vector of $\delta^r (c)$ and $\dif s = \norm{\dot{c}(t)} \dif t$.
 Consequently, the pullback of the $L^2$-norm is given by
  \begin{displaymath}
       G_c (v,v) = \int_I \frac{1}{4} \left\langle D_s v, u_c\right\rangle^2 + \left\|D_s v-u_c\left\langle D_s v, u_c \right\rangle\right\|^2 \dif s.
       \end{displaymath}
\end{theorem}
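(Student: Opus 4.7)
By Lemma \ref{lem: ind:diffeo} the SRVT factors as $\SRVT = \mathrm{sc} \circ \delta^r$, so by the chain rule $T_c \SRVT (v) = d_{\delta^r(c)} \mathrm{sc}\bigl( T_c \delta^r (v) \bigr)$ for every $v \in T_c \PSpace_*$. Introducing the shorthand $q := \delta^r(c)$, $h := T_c \delta^r(v)$ and $k := T_c \delta^r(w)$, the definition of the pullback metric reads
\begin{equation*}
G_c(v,w) = \bigl\langle T_c \SRVT(v), T_c \SRVT(w) \bigr\rangle_{L^2} = \int_I \bigl\langle d_{q(t)}\mathrm{sc}(h(t)),\, d_{q(t)}\mathrm{sc}(k(t))\bigr\rangle\, dt .
\end{equation*}
The task then reduces to computing the pointwise differential $d_q \mathrm{sc}$ and rewriting the resulting integral in arc length.

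For the pointwise differential, write $\mathrm{sc}(q) = \|q\|^{-1/2} q$ and apply the Leibniz rule together with $d_q \|q\|(h) = \langle h, q\rangle/\|q\|$. Setting $u_q := q/\|q\|$, a direct computation gives
\begin{equation*}
d_q \mathrm{sc}(h) = \|q\|^{-1/2}\bigl(h - \tfrac{1}{2}\langle h, u_q\rangle u_q\bigr) = \|q\|^{-1/2}\bigl(\tfrac{1}{2}\langle h, u_q\rangle u_q + h^\perp\bigr),
\end{equation*}
where $h^\perp := h - \langle h, u_q\rangle u_q$. Taking the inner product with the analogous expression for $k$, the cross terms vanish by orthogonality of $u_q$ with $h^\perp$ and $k^\perp$, leaving
\begin{equation*}
\bigl\langle d_q\mathrm{sc}(h), d_q\mathrm{sc}(k)\bigr\rangle = \frac{1}{\|q\|}\Bigl(\tfrac{1}{4}\langle h, u_q\rangle \langle k, u_q\rangle + \langle h^\perp, k^\perp\rangle\Bigr).
\end{equation*}

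To pass to the arc length formulation, use right-invariance of the metric to get $\|q(t)\| = \|\RightTrans_{c(t)*}^{-1}\dot c(t)\| = \|\dot c(t)\|$, so $u_q$ coincides with the $u_c$ of the theorem. Writing $h = \|\dot c\|\,D_s v$ and $k = \|\dot c\|\,D_s w$ yields $\langle h, u_c\rangle = \|\dot c\|\langle D_s v, u_c\rangle$ and $h^\perp = \|\dot c\|\bigl(D_s v - u_c \langle D_s v, u_c\rangle\bigr)$, and likewise for $k$. Substituting into the integrand and replacing $\|\dot c\|\,dt$ by $ds$ absorbs exactly one factor of $\|\dot c\|$ and produces the claimed expression \eqref{Eq:ElasticMetric}; the squared-norm formula is then the specialization $v = w$.

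The main obstacle is purely bookkeeping: two factors of $\|\dot c\|$ from $h$ and $k$ must be balanced against one factor $1/\|q\| = 1/\|\dot c\|$ from $d_q\mathrm{sc}$, leaving a single $\|\dot c\|$ to convert $dt$ into $ds$; and the coefficient $\tfrac{1}{4}$ in front of the tangential term arises only because the halving of $\langle h, u_q\rangle u_q$ in $d_q\mathrm{sc}$ is preserved in the squared norm. No analytic subtleties enter, since $q(t) \neq 0$ on $I$ ensures $\mathrm{sc}$ is smooth along the image of $\delta^r$.
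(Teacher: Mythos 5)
Your proposal is correct and follows essentially the same route as the paper: factor $\SRVT = \mathrm{sc}\circ\delta^r$, compute the pointwise differential $d_q\mathrm{sc}(h) = \|q\|^{-1/2}\bigl(h - \tfrac{1}{2}\langle h, u_q\rangle u_q\bigr)$, decompose orthogonally along $u_q$, and use right-invariance to convert $\|q\|\,dt$ into $ds$. The only (cosmetic) difference is that you apply the orthogonal splitting before taking the $L^2$ inner product so the cross terms vanish immediately, whereas the paper first expands to the intermediate coefficient $-\tfrac{3}{4}$ and then regroups; both yield the identical integrand.
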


\begin{proof}
We have to compute the tangent map of the SRVT at a curve $c$ in the direction of a vector field $v$ along $c$. 
Recall that $\SRVT = \text{sc} \circ \delta^r$, whence the chain rule implies $T_c \SRVT (v) = T \text{sc} \circ T_c \delta^r (v)$. Hence setting $z_v := T_c \delta^r (v) \in T_{\delta^r (c)} C^\infty (I, \LieA \setminus \{0\})$, it suffices to compute $T_{\delta^r (c)} \text{sc} (z_v)$.
 
By Proposition \ref{prop: LGP:prop}, there is a smooth map $\gamma \colon ]-\varepsilon , \varepsilon[ \times I \rightarrow \LieA \setminus \{0\}$ with $\gamma(0,t) = \delta^r (c)(t)$ and $\tpd{}{x} \gamma (x,t) = z_v(t)$.
Thus the tangent map of $\text{sc}$ can be computed as follows.
\begin{align*}
 T_c \SRVT (v) &=  T_{\delta^r (c)} \text{sc} (z_v)(t) = \left.\frac{\partial}{\partial x}\right|_{x =0}\text{sc} (\gamma (x,t)) 
			    = \left.\frac{\partial}{\partial x}\right|_{x =0} \frac{\gamma (x,t)}{\sqrt{\|\gamma (x,t)\|}}\\
			    &= \|\delta^r c (t))\|^{-\frac{1}{2}} z_v(t) -\frac{1}{2} \|\delta^r c (t))\|^{-\frac{5}{2}} \left\langle z_v(t), \delta^r c (t) \right\rangle \delta^r c (t).
\end{align*}
Set $z_w := T_c \delta^r (w)$ and substitute into the definition of the pullback metric these formulae. Then we simplify the expression as follows:
    \begin{equation}\begin{aligned} \label{eq: pullback1}
        &G_c(v, w) = \left< T_c \SRVT (v), T_c \SRVT (w) \right>_{L^2} \\
        =& \int_I \| \delta^r c (t) \|^{-1} \left\langle z_v(t) , z_w(t)  \right\rangle - \frac{3}{4} \| \delta^r c (t) \|^{-3} \left\langle z_v(t) , \delta^r c(t) \right\rangle \left\langle z_w(t) , \delta^r c(t) \right\rangle \dif t.
    \end{aligned}
    \end{equation}
 We set $u_c (t) := \frac{\delta^r c(t)}{\|\delta^r c (t)\|}$ and notice that $\norm{u_c(t)} =1$.
 Inserting these identities in \eqref{eq: pullback1} we can simplify $G_c (v,w)$ as follows
 \begin{align*}
	    &\int_I \| \delta^r c (t) \|^{-1}( \left\langle z_v(t) , z_w(t)\right\rangle - \frac{3}{4} \| \delta^r c (t) \|^{-2}\left\langle z_v(t) , \delta^r c(t) \right\rangle \left\langle z_w(t) , \delta^r c(t) \right\rangle) \dif t \\
	    &= \int_I \| \delta^r c (t) \|^{-1}\left( \left\langle z_v(t) , z_w(t) - \frac{3}{4} u_c (t) \left\langle z_w(t) , u_c (t) \right\rangle\right\rangle\right) \dif t\\
	    &= \int_I \| \delta^r c (t) \|^{-1}\left( \frac{1}{4} \langle z_v(t),  u_c (t)\rangle \langle z_w (t),u_c(t)\rangle + \right. \\
	    &\hspace{1cm} \left.\phantom{\frac{1}{4}}+ \langle z_v(t)-u_c(t)\langle z_v(t),u_c(t)\rangle , z_w(t)-u_c(t)\langle z_w(t),u_c(t)\rangle \rangle\right) \dif t\\
	    &= \int_I \| \delta^r c (t) \|\left( \frac{1}{4} \left\langle  \frac{T_c \delta^r (v)(t)}{ \| \delta^r c (t) \|},  u_c (t)\right\rangle \left\langle  \frac{T_c \delta^r (w)(t)}{ \| \delta^r c (t) \|},  u_c (t)\right\rangle  +\right.\\
	   &\left.+ \left\langle \frac{z_v(t)}{ \| \delta^r c (t) \|}-u_c(t)\left\langle \frac{z_v(t)}{ \| \delta^r c (t) \|},u_c(t)\right\rangle , \frac{z_w(t)}{ \| \delta^r c (t) \|}-u_c(t)\left\langle \frac{z_w(t)}{ \| \delta^r c (t) \|},u_c(t)\right\rangle \right\rangle\right) \dif t
 \end{align*}
 In passing from the second to the third line we have used the trivial identity 
 $$z_w(t) = u_c (t) \langle z_w(t),u_c(t)\rangle + (z_w(t)-u_c(t) \langle z_w(t),u_c(t)\rangle).$$
 We will now exploit that the norm and the Riemannian metric on the tangent spaces are right invariant, i.e.\ they are invariant under right translation.
 This entails $\norm{\delta^r (c)} = \norm{\dot{c}}$, whence $\dif s =  \norm{\dot{c}(t)} \dif t = \norm{\delta^r c(t)} \dif t$.
 Substitute this together with the identities for $z_v,z_w$ and $u_c$ to obtain 
 \begin{align*}
   G_c (v,w) = \int_I \frac{1}{4} &\left\langle D_s v, u_c\right\rangle \left\langle D_s w, u_c \right\rangle  \\ &+ \left\langle D_s v-u_c \left\langle D_s v,u_c \right\rangle , D_s w-u_c\left\langle D_s w,u_c \right\rangle \right\rangle \dif s.
  \end{align*}
 Thus \eqref{Eq:ElasticMetric} holds and the formula for the norm follows directly by specialization. 
\end{proof}

\begin{remark}
 For computations in the rest of the paper, we will need neither the explicit form of the pullback metric on $\mathcal{P}_*$ nor the norm computed in Theorem \ref{thm: pbmetric}. 
 The idea is to use the SRVT to relegate all questions concerning the metric to $(C^\infty (I,\mathfrak{g}\setminus \{0\}),\langle\cdot,\cdot \rangle_{L^2})$.
\end{remark}

 \begin{remark} 
 Theorem \ref{thm: pbmetric} generalises \cite[Theorem 4.2]{bauer_constructing_2014} (for the parameter values $a =1 = 2b$): 
 Viewing the vector space $\mathbb{R}^2$ as an abelian Lie group, our approach recovers the constructions of the pullback metric in the vector space case. 
 
 Notice however that we take a slightly different (but equivalent) perspective on the shape spaces involved in the construction: 
 In \cite{bauer_constructing_2014} the unparametrized curves are modelled as the quotient of $\mathrm{Imm} (I,\mathbb{R}^2)$ modulo translations.
 Picking representatives for each class, one can show that on the level of infinite-dimensional manifolds this yields the same concept of (unparametrized) curves as our approach.
 Moreover, as the SRVT is translation invariant (Lemma \ref{lem: SRVT:prop}), the two constructions yield the same Riemannian manifold (as already observed in \cite{bauer_constructing_2014}).
 \end{remark}

The pullback Riemannian metric constructed in Theorem \ref{thm: pbmetric} defines a Riemannian metric on the space of parametrized curves $\PSpace_*$. 
For curves which take their value in $\mathbb{R}^n$ the two terms in the integral can be seen as measuring bending and stretching deformations, respectively \cite{srivastava_shape_2011, bauer_constructing_2014}.
Therefore, metrics of the form \eqref{Eq:ElasticMetric} are known as \emph{elastic metrics}.
\medskip

Note that the formula for the Riemannian metric is given in terms of a tangent map of the right-logarithmic derivative. 
We here give an explicit formula for this map. A proof can be found in Appendix \ref{App: linearLie}.

\begin{proposition}\label{Prop: trlog}
 Let $c \in C^\infty (I,\LieG)$ and $v \in T_c C^\infty (I,\LieG)$, i.e.\ $v \in C^\infty (I,TG)$ with $v(t) \in T_{c(t)} G,\ \forall t \in I$. 
 Then writing $\left[ \cdot, \cdot \right]$ for the Lie bracket in $\LieA$ we have
  \begin{displaymath}
   T_c \delta^r (v) (t) = \frac{\dif}{\dif t} \left(R_{(c(t))^*}^{-1}(v(t)) \right) + \left[R_{(c(t))^*}^{-1}(v(t)), \delta^r (c) (t) \right].
  \end{displaymath}
\end{proposition}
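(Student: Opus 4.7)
The plan is to reduce the computation of $T_c \delta^r(v)$ to a two-parameter calculation governed by a zero-curvature (Maurer-Cartan) identity. By Proposition \ref{prop: LGP:prop}(4) I can realize $v$ as the velocity at $\varepsilon = 0$ of a smooth one-parameter variation $\gamma \colon (-\varepsilon_0, \varepsilon_0) \times I \to \LieG$ with $\gamma(0,\cdot) = c$ and $\partial_\varepsilon \gamma|_{\varepsilon = 0}(t) = v(t)$. Since $\delta^r$ is given pointwise by the derivative and a right translation, this yields
$$T_c \delta^r(v)(t) = \left.\frac{\partial}{\partial \varepsilon}\right|_{\varepsilon = 0} R_{\gamma(\varepsilon,t)*}^{-1}\bigl(\partial_t \gamma(\varepsilon,t)\bigr).$$

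Introduce the auxiliary $\LieA$-valued functions
$$\xi(\varepsilon,t) := R_{\gamma(\varepsilon,t)*}^{-1}\bigl(\partial_t \gamma(\varepsilon,t)\bigr), \qquad \eta(\varepsilon,t) := R_{\gamma(\varepsilon,t)*}^{-1}\bigl(\partial_\varepsilon \gamma(\varepsilon,t)\bigr),$$
so that $\xi(0,\cdot) = \delta^r(c)$, $\eta(0,t) = R_{c(t)*}^{-1}(v(t))$, and $T_c \delta^r(v)(t) = \partial_\varepsilon \xi(0,t)$. The heart of the argument is the right Maurer-Cartan identity
$$\partial_\varepsilon \xi - \partial_t \eta = [\eta, \xi],$$
which, evaluated at $\varepsilon = 0$, rearranges exactly to the stated formula. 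In the matrix case this is a one-line application of the product rule together with the symmetry of mixed partials (differentiating $\dot\gamma \gamma^{-1}$ in $\varepsilon$, $\partial_\varepsilon \gamma \cdot \gamma^{-1}$ in $t$, and subtracting); on a general Hilbert Lie group the same identity is the structure equation $d\omega^r = \tfrac{1}{2}[\omega^r,\omega^r]$ for the right Maurer-Cartan form pulled back by $\gamma$.

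The main obstacle I anticipate is the careful justification of the zero-curvature identity in the (possibly infinite-dimensional) Bastiani setting, which uses the symmetry of mixed partial derivatives $\partial_t \partial_\varepsilon \gamma = \partial_\varepsilon \partial_t \gamma$ together with smoothness of $(\varepsilon,t) \mapsto R_{\gamma(\varepsilon,t)*}^{-1}$ viewed as a map into $\LieA$. Both facts are standard when $\LieG$ is a Banach (in particular Hilbert) Lie group; for completeness one can transport the computation through right translation into a chart around the identity in $\LieG$, where the identity reduces to an explicit verification on the modelling space. Careful tracking of signs, in particular the placement of the bracket as $[\eta,\xi]$ rather than $[\xi,\eta]$, is needed to match the formula as stated in the proposition.
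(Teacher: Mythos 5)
Your argument is correct, and it reaches the formula by a route that is a close cousin of, but not identical to, the paper's. You set up the same one-parameter variation $\gamma$ (via Proposition \ref{prop: LGP:prop}) and then invoke the right Maurer--Cartan structure equation $\partial_\varepsilon \xi - \partial_t \eta = [\eta,\xi]$ for the two partial logarithmic derivatives of $\gamma$; evaluating at $\varepsilon=0$ gives the claim, and your sign check in the matrix case is the right sanity test (indeed $\partial_\varepsilon\xi = \partial_\varepsilon\partial_t\gamma\cdot\gamma^{-1}-\xi\eta$ and $\partial_t\eta = \partial_t\partial_\varepsilon\gamma\cdot\gamma^{-1}-\eta\xi$, so the mixed partials cancel and the bracket comes out as $[\eta,\xi]$, matching the proposition). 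The paper instead avoids proving the zero-curvature identity directly: it factors $\gamma(\varepsilon,t) = \bigl(\gamma(\varepsilon,t)c(t)^{-1}\bigr)\cdot c(t)$ and applies the product rule $\delta^r(f\cdot g) = \delta^r(f) + \mathrm{Ad}(f)(\delta^r(g))$ from \cite[38.1 Lemma]{kriegl97tcs}, after which the two terms of the formula fall out from the two cited facts $T_{e_G}\delta^r(\xi) = \tfrac{\dif}{\dif t}\xi$ and $T_e\mathrm{Ad}(x)(y)=[x,y]$. What the paper's route buys is that every analytic subtlety (smoothness, interchange of derivatives, validity beyond Banach groups) is outsourced to results already quoted from \cite{kriegl97tcs} and \cite{glockner_regularity_2012}; what your route buys is conceptual transparency, at the price of the obstacle you correctly identify, namely justifying the structure equation in the Bastiani/convenient setting --- though that identity is itself available in \cite{kriegl97tcs} for regular (in particular Hilbert) Lie groups, so citing it there would close the gap without the chart computation you sketch. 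Either way the proof is complete once that reference is supplied.
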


In what follows, we discuss the Riemannian geometry of the $L^2$-metric on the image of the SRVT. 
This will shed light on the geometry induced by the pullback metric on $\PSpace_*$.
Geodesics in the image of $\SRVT$ are just restrictions of geodesics in $C^\infty(I, \LieA)$ (with respect to the $L^2$-metric) to $\mathrm{Im}\ \SRVT = C^\infty(I, \LieA\setminus \{0\})$.
These geodesics correspond to geodesics in $\PSpace_*$ with respect to the pullback metric \eqref{Eq:ElasticMetric}.
As we are interested in the geodesic distances on $\PSpace_*$, let us first try to understand geodesics on $\text{Im } \SRVT \subseteq C^\infty (I, \LieA)$. 

\begin{proposition}\label{prop: curvature}
 Consider $C^\infty (I,\mathfrak{g})$ with the weak Riemannian structure induced by the $L^2$-inner product $\langle f, g \rangle_{L^2} := \int_I \langle f(t),g(t)\rangle \dif t$.
 \begin{compactenum}
  \item The space $(C^\infty (I,\mathfrak{g}), \langle\cdot,\cdot\rangle_{L^2})$ is flat in the sense of Riemannian geometry.
  \item The open subset $C^\infty (I,\LieA\setminus \{0\}) \subseteq C^\infty (I,\LieA)$ is also flat.
        Furthermore, for $\text{\upshape dim } \LieA >1$ there exist points in $C^\infty (I,\LieA\setminus \{0\})$ not connected by a minimizing geodesic. 
 \end{compactenum}
\end{proposition}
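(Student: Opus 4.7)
The plan is to work in the global linear chart on $C^\infty(I,\mathfrak{g})$ provided by its affine Fr\'{e}chet space structure (cf.\ Proposition~\ref{prop: LGP:prop}): in this chart the $L^2$--metric has coefficients independent of the base point, so the whole question reduces to elementary affine geometry in a weak inner product space.

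For (1), I would identify $T_f C^\infty(I,\mathfrak{g})\cong C^\infty(I,\mathfrak{g})$ for every $f$ and observe that $\langle\cdot,\cdot\rangle_{L^2}$ does not depend on $f$. Hence, in the weak Riemannian sense appropriate to a locally convex manifold, all Christoffel symbols vanish, the Levi--Civita covariant derivative coincides with the ordinary directional derivative, the Riemann curvature tensor is identically zero, and geodesics satisfy $\ddot\gamma\equiv 0$, i.e.\ they are precisely the affine paths $t\mapsto(1-t)f_0+tf_1$.

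For the first half of (2), flatness is a local property, so the open subset $C^\infty(I,\mathfrak{g}\setminus\{0\})$ inherits the vanishing curvature tensor; a maximal geodesic in it is exactly the restriction to the subset of an affine path in the ambient space. For the existence of points without a minimising geodesic I would pick a unit vector $v_0\in\mathfrak{g}$ and take $f_0,f_1\in C^\infty(I,\mathfrak{g}\setminus\{0\})$ to be the constant curves $f_0\equiv v_0$ and $f_1\equiv -v_0$. The unique ambient affine path between them, $t\mapsto(1-2t)v_0$, passes through the zero function at $t=1/2$ and so leaves $C^\infty(I,\mathfrak{g}\setminus\{0\})$; by the classification of geodesics just derived, no minimising geodesic between $f_0$ and $f_1$ can exist inside the subset. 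To rule out the degenerate possibility that $d(f_0,f_1)=\infty$, I invoke $\dim\mathfrak{g}>1$ to pick $w\in\mathfrak{g}$ with $\langle w,v_0\rangle=0$ and $\|w\|=1$, and study the smooth family
\[
\gamma_\epsilon(t):=(1-2t)v_0+\epsilon\sin(\pi t)\,w,\qquad t\in[0,1].
\]
Linear independence of $v_0$ and $w$ forces $\gamma_\epsilon(t)\neq 0$ for every $t$, and a direct computation gives the $L^2$--length
\[
L(\gamma_\epsilon)=\int_0^1\sqrt{4+\epsilon^2\pi^2\cos^2(\pi t)}\,dt\;\xrightarrow[\epsilon\to 0]{}\;2=\|f_0-f_1\|_{L^2},
\]
so the infimum of path lengths equals the ambient straight-line distance but is not attained.

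The only delicate point is the Fr\'{e}chet-setting assertion that a minimising geodesic in the open subset must be an ambient straight line, since Hopf--Rinow is unavailable in infinite dimensions. This is not a genuine obstacle: in the global linear chart the metric has constant coefficients, so the geodesic equation reduces to $\ddot\gamma\equiv 0$ and its solutions through given endpoints are uniquely affine, with no appeal to completeness or Hilbert structure required.
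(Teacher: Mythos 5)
Your argument is correct and takes essentially the same route as the paper: the $L^2$-metric has constant coefficients in the global linear chart, so the space is flat with affine geodesics, and the unique affine segment joining an element to its negative passes through $0$ and hence leaves $C^\infty(I,\LieA\setminus\{0\})$. Your extra perturbation family $\gamma_\epsilon$, verifying that the infimum of lengths equals the $L^2$-distance yet is not attained, is a worthwhile refinement that the paper only supplies later (Theorem~\ref{thm: globaldist} and Appendix~\ref{App: distance}).
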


\begin{proof}
  \begin{compactenum}
   \item The Riemannian metric is the inner product of a vector space. Hence $C^\infty (I,\mathfrak{g})$ is flat as all derivatives of the Riemannian metric with respect to the base point vanish. 
  \item Note that $C^\infty (I,\LieA\setminus \{0\})$ is an open subset of $C^\infty (I,\LieA)$, whence flat. 
  For $\text{dim } \LieA > 1$ the set $\LieA \setminus \{0\}$ is connected but not convex, whence there are points not connected by minimizing geodesics, e.g.\ for $c \in C^\infty (I,\LieA\setminus \{0\})$ the minimizing geodesic connecting $c$ and $-c$ is not contained in $C^\infty (I,\LieA\setminus \{0\})$ (since $c(t) + \tfrac{1}{2}(-c(t)-c(t))=\tfrac{1}{2} c(t) +\tfrac{1}{2} (-c(t)) = 0$).
  \qedhere
  \end{compactenum}
\end{proof}

 As the Riemannian structure of $C^\infty (I,\LieA \setminus \{0\})$ is induced by $C^\infty (I,\LieA)$, the $L^2$-distance locally describes the geodesic distance. 
 Thus the distance function \eqref{Eq:ElasticMetric} locally describes the geodesic distance with respect to the pullback Riemannian metric on $\PSpace_*$.
 Since minimizing geodesics need not exist between points $f,g \in C^\infty (I,\LieA \setminus \{0\})$, the geodesic distance might be strictly greater than the $L^2$-distance if $f(s) +t (f(s)-g(s)) =0$.
 
 However, we will prove that, at least if the dimension of the Lie algebra is large enough, the $L^2$-distance coincides with the geodesic distance (see Theorem \ref{thm: globaldist}).  
 To prove that the geodesic distance on $C^\infty (I,\LieA \setminus \{0\})$ coincides with the $L^2$-distance, we have to approximate the minimizing geodesic by paths in $\LieA \setminus \{0\}$.
 To avoid a lengthy exposition at this time, we have relegated these details to Appendix \ref{App: distance}.

\begin{theorem}\label{thm: globaldist}
  If $\text{\upshape dim } \LieA > 2$ then the geodesic distance of $C^\infty (I,\LieA\setminus \{0\})$ is globally given by the $L^2$-distance.  
  In particular, in this case the geodesic distance of the pullback metric \eqref{Eq:ElasticMetric} on $\PSpace_*$ is given by the distance function \eqref{Eq:L2Distance}.
\end{theorem}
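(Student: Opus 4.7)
My plan is to prove the two matching inequalities $d_{\mathrm{geo}} = d_{L^2}$ for the geodesic distance on $C^\infty(I,\mathfrak{g}\setminus\{0\})$ induced by the $L^2$-metric; the conclusion for $\PSpace_*$ then follows automatically. Indeed, by Lemma \ref{lem: ind:diffeo} the SRVT is a diffeomorphism $\PSpace_* \to C^\infty(I,\mathfrak{g}\setminus\{0\})$, and by Theorem \ref{thm: pbmetric} it pulls the $L^2$-metric back precisely to the elastic metric \eqref{Eq:ElasticMetric}. Hence it is a Riemannian isometry, geodesic distances transport, and \eqref{Eq:L2Distance} computes the geodesic distance of the pullback metric on $\PSpace_*$.

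The bound $d_{\mathrm{geo}} \ge d_{L^2}$ is immediate from Proposition \ref{prop: curvature}(1): $C^\infty(I,\mathfrak{g}\setminus\{0\})$ is an open subset of the flat space $(C^\infty(I,\mathfrak{g}),\langle\cdot,\cdot\rangle_{L^2})$, in which the segment $\gamma_0(s) = (1-s)f + sg$ realizes $\|f-g\|_{L^2}$ as a minimizing geodesic. Any smooth path in the open subset is a fortiori a path in the ambient flat space of at least this length, so infimizing gives the bound.

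The reverse inequality is the genuine content. For each $\varepsilon > 0$, I plan to construct a smooth path $\gamma_\varepsilon \colon [0,1] \to C^\infty(I,\mathfrak{g}\setminus\{0\})$ joining $f$ and $g$ of length at most $\|f-g\|_{L^2} + \varepsilon$, by perturbing the straight line $\gamma_0$ --- which has the right length but may take the value $0$ --- by a small correction $h\colon[0,1]\to\mathfrak{g}$:
\[
  \gamma_\varepsilon(s)(t) \;=\; (1-s)f(t) + sg(t) + h(s), \qquad h(0)=h(1)=0.
\]
Since $\gamma_\varepsilon'(s)(t) = (g(t)-f(t)) + h'(s)$, Minkowski's inequality bounds the length of $\gamma_\varepsilon$ by $\|f-g\|_{L^2} + C\|h'\|_{L^2([0,1];\mathfrak{g})}$, and I will be able to drive the second term to zero.

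The main obstacle, and the only point at which the hypothesis $\dim\mathfrak{g} > 2$ intervenes, is the pointwise nonvanishing condition $\gamma_\varepsilon(s)(t) \ne 0$: the lifted curve $s \mapsto (s, h(s))$ in $[0,1]\times\mathfrak{g}$ must avoid the \emph{forbidden set} $\mathcal{F} := \{(s, -\gamma_0(s)(t)) : (s,t) \in [0,1]\times I\}$. Being the image of a smooth map from a two-dimensional parameter space into the $(1+\dim\mathfrak{g})$-dimensional total space, $\mathcal{F}$ has codimension at least $\dim\mathfrak{g} - 1 \ge 2$, so by standard transversality a generic small perturbation of the zero-path $s \mapsto (s, 0)$ --- which may touch $\mathcal{F}$ exactly at the bad values of $s$ where $\gamma_0(s)$ vanishes somewhere --- misses $\mathcal{F}$ entirely, while the endpoints $(0,0)$ and $(1,0)$ automatically lie off $\mathcal{F}$ because $f$ and $g$ do not vanish. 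Producing such a perturbation quantitatively, with arbitrarily small $L^2$-derivative --- for instance by choosing a locally transverse direction near each zero of $\gamma_0$ and patching by a partition of unity --- is the technical heart of the proof, and is what I expect the referenced appendix to carry out in detail.
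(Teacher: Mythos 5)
Your proposal is correct in substance, and its key step is genuinely different from the paper's. Both arguments agree on the easy inequality ($d_{\mathrm{geo}}\ge d_{L^2}$ from flatness of the ambient space) and on transporting the result to $\PSpace_*$ via the SRVT isometry. For the reverse inequality the paper keeps the connecting paths as exact straight segments but moves an \emph{endpoint}: it constructs a small perturbation $v_1^\varepsilon$ of $q_1$ such that both segments $q_1\to v_1^\varepsilon$ and $v_1^\varepsilon\to q_2$ stay in $C^\infty(I,\LieA\setminus\{0\})$, and concludes by the triangle inequality; the perturbation is built on the unit sphere $S_{\LieA}$ (the segment from $q_1$ to $q_2$ hits $0$ iff $q_1/\norm{q_1}=-q_2/\norm{q_2}$ somewhere), using Sard plus stereographic projection in finite dimensions, and the fact that $S_H\cong H$ in infinite dimensions, to reduce to making a curve in a vector space of dimension $>1$ avoid a moving point. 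You instead keep the endpoints fixed and bend the \emph{segment itself} by a $t$-independent translation $h(s)$, reducing matters to making the graph $s\mapsto(s,h(s))$ avoid the image of the two-parameter map $(s,t)\mapsto(s,-\gamma_0(s)(t))$, which is possible since $1+2<1+\dim\LieA$. Your route is arguably more direct: taking $h(s)=\phi(s)v$ for a bump function $\phi$ and a single vector $v$, the bad set of $v$'s is the image of a compact two-manifold in $\LieA$, hence null when $3\le\dim\LieA<\infty$, so arbitrarily small good $v$ exist and the Minkowski estimate finishes the proof with one explicit almost-minimizing path instead of a concatenation. Two caveats: (i) your transversality phrasing is finite-dimensional, whereas the theorem is also invoked for Hilbert Lie algebras — there you should replace "measure zero" by "compact, hence nowhere dense", after which the argument goes through unchanged; (ii) the "patching by a partition of unity near each zero of $\gamma_0$" you gesture at is unnecessary and harder to control than the single global bump-function perturbation, so I would commit to the latter. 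Both proofs use $\dim\LieA>2$ in an essential dimension count and both break down at $\dim\LieA=2$.
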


\begin{remark}
 Notice that Theorem \ref{thm: globaldist} entails that the pseudometric $d_{\PSpace_*}$ defined via \eqref{Eq:L2Distance} is non-degenerate for $\text{dim } \LieA >2$, i.e.\ $d_{\PSpace_*} (c,\tilde{c}) >0$ if $c \neq \tilde{c}$. 
 Thus $d_{\PSpace_*}$ is a metric on $\PSpace_*$. In particular, we will say that $d_{\PSpace_*}$ defines a distance function on $\PSpace_*$ in this case.
\end{remark}

The crucial observation here is that one obtains a distance on $\PSpace_*$. 
This enables us in Section \ref{Sec:NumericalResults} to compute distances between curves and to deform curves into each other along geodesic paths.
The relation of geodesic distance and $L^2$-distance has previously been used in animation classification tasks in $\mathbb{R}^d$ for $d > 2$ e.g.\ in \cite{eslitzbichler_modelling_2014}.

Finally, we can follow the argument given in \cite[Theorem 6.1]{bauer_constructing_2014} to derive information on the curvature of the shape space:

\begin{corollary}
 The curvature of the space $\SSpace_* := \PSpace_* / \Diffeom$ with the Riemannian metric induced by the pullback metric \eqref{Eq:ElasticMetric} is non-negative.
\end{corollary}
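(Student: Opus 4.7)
The strategy is to realise $\SSpace_{*}$ as the base of a Riemannian submersion whose total space is flat, and then invoke O'Neill's curvature formula.

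First, I would recall that, by Lemma \ref{lem: ind:diffeo}, the SRVT restricts to a diffeomorphism $\SRVT\colon \PSpace_{*}\to C^{\infty}(I,\LieA\setminus\{0\})$, and by construction of the pullback metric in Theorem \ref{thm: pbmetric} this map is an isometry onto the open submanifold $C^{\infty}(I,\LieA\setminus\{0\})\subseteq C^{\infty}(I,\LieA)$ equipped with the $L^{2}$-metric. Proposition \ref{prop: curvature} asserts that this latter space is flat, and flatness passes to open submanifolds. Hence the pullback metric \eqref{Eq:ElasticMetric} turns $\PSpace_{*}$ into a flat (weak) Riemannian manifold.

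Next I would show that the right action of $\Diffeom$ on $\PSpace_{*}$ is by isometries for the pullback metric. This is immediate from the reparametrisation equivariance of $\SRVT$ (Lemma \ref{lem: SRVT:prop}), combined with the change-of-variables computation already used in the proof that $d_{\PSpace}$ is reparametrisation invariant: under $c\mapsto c\circ\varphi$, the SRVT transforms by $q\mapsto \sqrt{\dot{\varphi}}\,(q\circ\varphi)$, which is an $L^{2}$-isometry on $C^{\infty}(I,\LieA)$. Thus at each point where the action is sufficiently regular (e.g. on the open dense subset of free immersions, cf.\ the discussion in \cite{bauer_overview_2014}) the projection $\pi\colon\PSpace_{*}\to\SSpace_{*}$ is a Riemannian submersion onto the quotient Riemannian manifold $\SSpace_{*}$ equipped with the induced metric.

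Finally, I would apply O'Neill's curvature formula: if $\pi\colon(M,g)\to(N,h)$ is a Riemannian submersion and $X,Y$ are horizontal lifts of an orthonormal pair at a point, then the sectional curvatures satisfy
\begin{equation*}
K_{N}(\pi_{*}X,\pi_{*}Y) \;=\; K_{M}(X,Y) \;+\; \tfrac{3}{4}\,\bigl\lVert [X,Y]^{V}\bigr\rVert^{2},
\end{equation*}
where $[X,Y]^{V}$ denotes the vertical component of the Lie bracket. Since $K_{M}\equiv 0$ in our situation, this forces $K_{N}\geq 0$ at every point where the submersion structure is valid, giving the claim.

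The main obstacle, as usual in shape analysis, is that $\Diffeom$ does not act freely on all of $\PSpace_{*}$, so $\SSpace_{*}$ is strictly speaking a stratified space rather than a smooth manifold; accordingly one has to interpret ``curvature'' on the open dense stratum of free immersions (or argue via the orbifold structure), exactly as in \cite[Theorem 6.1]{bauer_constructing_2014}. In addition O'Neill's formula must be justified in the infinite-dimensional Fr\'echet setting, but the weak Riemannian submersion framework needed is precisely the one already used in the reference, and transports to our pullback metric without change because all computations reduce via $\SRVT$ to the flat $L^{2}$-geometry on $C^{\infty}(I,\LieA\setminus\{0\})$.
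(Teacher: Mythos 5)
Your proposal is correct and follows essentially the same route as the paper: flatness of $\PSpace_*$ via Proposition \ref{prop: curvature} and the SRVT isometry, the quotient map $\PSpace_*\to\SSpace_*$ as a Riemannian submersion, and O'Neill's formula to conclude non-negativity. The extra care you take in verifying that $\Diffeom$ acts by isometries and in flagging the stratification/infinite-dimensional caveats is a welcome elaboration of points the paper leaves implicit, but it does not change the argument.
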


\begin{proof}
 The Riemannian structure on $\SSpace_*$ is induced by the one on $\PSpace_*$, i.e.\ the canonical quotient map $\PSpace_* \rightarrow \SSpace_* = \PSpace_*/\Diffeom$ is a Riemannian submersion.
 Hence we can apply the O'Neil curvature formula (see e.g.\ \cite[Theorem 3.20]{MR0458335}) for the sectional curvature of the quotient $\SSpace_*$. 
 For orthonormal vector fields $X,Y$ on $\SSpace_*$ this yields: 
  \begin{displaymath}
   K_{\SSpace_*} (X,Y) = K_{\PSpace_*} (\tilde{X}, \tilde{Y}) + \frac{3}{4}\norm{\left[\tilde{X}, \tilde{Y}\right]^\mathrm{vert}}^2
  \end{displaymath}
 Here $\tilde{X}$ and $\tilde{Y}$ are horizontal lifts of $X$ and $Y$ to $\PSpace_*$ and $\left[\tilde{X}, \tilde{Y}\right]^\mathrm{vert}$ denotes the vertical projection.
 As $\PSpace_*$ is flat by Proposition \ref{prop: curvature}, the curvature $K_{\PSpace_*}$ vanishes, whence $K_{\SSpace_*}$ is non-negative.
\end{proof}

\subsection{Distance as an optimization problem}
In the last section we acquired a distance function on $\PSpace_*$ for dim $\LieA >2$.
Using this distance function, we can calculate distances in the shape space $\SSpace_* := \PSpace_* / \Diffeom$\footnote{Recall that in the beginning of the Section 3 we set out to construct a distance function for the shape space $\SSpace = \PSpace / \Diffeom$. As explained in the last section, this is not possible within the SRVT-framework. Hence we have to use the smaller space $\SSpace_*$ (which can be identified with a subset of $\SSpace$).} by solving the optimization problem
\begin{equation}\label{Eq:OptimProblem}
    d_{\SSpace_*}( [c_0], [c_1] ) := \inf_{\varphi \in \Diffeom} \left( \int_I \| q_0(t) - q_1(\varphi(t))\sqrt{\dot{\varphi}(t)}\|^2 \dif t \right)^{1/2},
\end{equation} where $q_i := \SRVT(c_i),\, i=0,1$.

When computing $d_{\SSpace_*}$ we therefore need to perform an optimization over the diffeomorphism group $\Diffeom$ (see for example \cite{lahiri_precise_2015} for more details and an extension to piecewise linear curves instead of immersions).

In practice, one of two different algorithms is used to solve this optimization problem: either a gradient descent based approach or a dynamic programming (DP) algorithm.
For our numerical experiments and applications in Section \ref{Sec:NumericalResults}, we have used DP, which constructs a piecewise linear approximation of the optimal reparametrization $\varphi$ in \eqref{Eq:OptimProblem}.
See \cite{sebastian_aligning_2003, bauer_landmark-guided_2015} and references therein for more information on the use of DP for shape analysis.

\section{Closed curves}\label{Sec:ClosedCurves}
In various applications, we are particularly interested in closed curves.
For example, in object recognition, closed planar curves can be used to represent outlines of objects.
In computer animation, closed curves are cyclic animations that can be repeated multiple times with no visually noticeable discontinuities.

Here we will derive a method to calculate a closed curve approximation of an existing open curve.
Again, we will be working with SRV transformed representatives $q$ of curves $c \in \PSpace_*$.
We denote the image sets via the $\mathcal{R}$ transform of the open and closed immersions on $G$ by 
\begin{align*}
    \mathcal{C}^o &:= \mathcal{R}\big(\mathrm{Imm}(I,G)\big) = C^\infty (I, \LieA\setminus \{0\})=\{ q\in C^{\infty}(I,\mathfrak{g}) \, |\, q(t) \neq 0 \quad \forall t \in I \} \nonumber\\
    \mathcal{C}^c &:= \mathcal{R}\big(\{ c\in \mathrm{Imm}(I,G)  \, | \, c(0)=c(1)=e \}\big),
\end{align*} respectively, i.e., we will be working exclusively with parametrized curves, and not with shapes.
Notice that the open curves are an open subset of the locally convex space $C^\infty (I,\LieA)$.

A curve $c$ in the set of closed curves $\mathcal{C}^c$ will in general only be $C^0$-closed.
This means, we do not require that the derivatives $\frac{\dif}{\dif t} c(0)$ and $\frac{\dif}{\dif t} c(1)$ (or higher derivatives at the closing points) coincide.
Hence the closed curves considered here admit ``corners'' at the closing points and should not be confused with smooth loops in $\LieG$, i.e.\ smooth maps from the unit circle $\mathbb{S}^1$ to $\LieG$.

Consequently, the closed curves computed via the methods in this section will only be $C^0$-closed.
Though a higher order closing might be desirable, for the applications we have in mind the closing of the curves is sufficient.

\subsection{Closed curves as a closed submanifold}
We will now prove that $\mathcal{C}^c$ is a closed submanifold of the open curves $\mathcal{C}^o$ and thus a closed submanifold of $C^\infty (I,\LieA)$.
Since $\SRVT$ is a diffeomorphism, this entails that $\{ c\in \mathrm{Imm}(I,G)  \, | \, c(0)=c(1)=e \} = \SRVT^{-1} (\mathcal{C}^c)$ is a closed submanifold of $\PSpace_* = \{ f \in \mathrm{Imm}(I,\LieG) \mid f(0) = e\}$.

The basic idea is now to construct $\mathcal{C}^c$ as the preimage of a closed submanifold under a submersion.
To this end, consider the point evaluation map
\begin{align*}
 \mathrm{ev}_1 \colon C^\infty (I,G) &\rightarrow G,\quad  f \mapsto f(1), \quad \text{ (evaluation in } 1).
\end{align*}
It can be observed that
\begin{displaymath}
 \mathcal{C}^c=r^{-1}(e),\qquad r\colon \mathcal{C}^o\rightarrow G,\quad r:=\mathrm{ev}_1\circ \Gamma \circ \mathrm{sc},
\end{displaymath}
where $\mathrm{sc}$ denotes the scaling map from \eqref{eq: scaling} and we note that $r(q)=\mathcal{R}^{-1}(q)(1).$

\begin{proposition}\label{prop: curves:smfd}
 The map $r \colon \mathcal{C}^o \rightarrow G$ is a submersion.
 Hence $\mathcal{C}^c=r^{-1}(e)$ is a closed submanifold of finite codimension in $\mathcal{C}^o \subseteq C^\infty(I,\mathfrak{g})$.
\end{proposition}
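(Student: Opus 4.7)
The plan is to observe that $r$ factors through diffeomorphisms as $r = \mathrm{ev}_1 \circ \Gamma \circ \mathrm{sc}$. By Lemma \ref{lem: ind:diffeo}, $\mathrm{sc}$ is a diffeomorphism of $\mathcal{C}^o = C^\infty(I,\mathfrak{g}\setminus\{0\})$ onto itself, and by Proposition \ref{prop: LGP:prop}(3) (via \cite{glockner_regularity_2012}) the evolution operator $\Gamma$ is a diffeomorphism from $C^\infty(I,\mathfrak{g})$ onto $C^\infty_*(I,G)$. Thus $r$ will be a submersion as soon as the point-evaluation $\mathrm{ev}_1 \colon C^\infty_*(I,G) \to G$ is, and the preimage statement will then follow from the regular value theorem for submersions.

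To verify that $\mathrm{ev}_1$ is a submersion at an arbitrary $c \in C^\infty_*(I,G)$, first identify the tangent map using Proposition \ref{prop: LGP:prop}(4): for $v \in T_c C^\infty_*(I,G)$, represented by a smooth curve $\gamma \colon \mathbb{R}\to C^\infty_*(I,G)$ with $\gamma(0)=c$, one has $T_c \mathrm{ev}_1(v) = v(1) \in T_{c(1)}G$. Surjectivity, together with a continuous linear section, can be produced explicitly: fix once and for all a smooth bump $\beta \colon I \to \mathbb{R}$ with $\beta(0)=0$ and $\beta(1)=1$, and define
\begin{equation*}
 \sigma_c \colon T_{c(1)}G \rightarrow T_c C^\infty_*(I,G), \qquad w \mapsto \bigl(t \mapsto \beta(t)\, R_{c(t)*} R_{c(1)*}^{-1}(w)\bigr).
\end{equation*}
Smoothness of $\sigma_c(w)$ as a section of $TG$ along $c$ follows from the smoothness of right translation and of $\beta$; linearity and continuity in $w$ are immediate; and $\sigma_c(w)(0)=0_e$ so that $\sigma_c$ lands in $T_c C^\infty_*(I,G)$. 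Evaluating at $t=1$ gives $T_c \mathrm{ev}_1 \circ \sigma_c = \mathrm{id}_{T_{c(1)}G}$, so $T_c \mathrm{ev}_1$ is a split surjection with finite-dimensional cokernel zero and closed kernel $\ker T_c \mathrm{ev}_1 = \mathrm{im}(\mathrm{id}-\sigma_c\circ T_c\mathrm{ev}_1)$ complemented by $\mathrm{im}\, \sigma_c$.

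Combining this with the fact that $\Gamma \circ \mathrm{sc}$ is a diffeomorphism, the tangent map $T_q r$ at any $q \in \mathcal{C}^o$ is the composition of a linear isomorphism with a split surjection, hence itself split surjective. Therefore $r$ is a submersion in the Bastiani/convenient sense. Since $e\in G$ is a regular value of the submersion $r$ and the codomain $G$ is finite-dimensional (or, in the Hilbert Lie group setting, splits off as a direct summand), the standard preimage theorem for submersions on Fréchet manifolds (e.g.\ \cite[Theorem 7.9 and the results around it]{MR3351079}, applied together with the splitting produced by $\sigma_c$) yields that $\mathcal{C}^c = r^{-1}(e)$ is a closed submanifold of $\mathcal{C}^o$ whose codimension equals $\dim G < \infty$.

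The main obstacle I expect is not the algebra, which is essentially a one-line computation using right translations, but rather making precise the preimage theorem in the infinite-dimensional Fréchet setting: one must exhibit not merely a surjective tangent map but an actual continuous linear section whose image is a topological direct summand, which is exactly what the bump-function construction $\sigma_c$ provides. Once that section is in place, the submanifold chart around each point of $\mathcal{C}^c$ is built by straightening $r$ using $\sigma_c$ as a local transversal, and the finite codimension is read off from $\dim G$.
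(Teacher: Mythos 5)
Your reduction of the problem to $\mathrm{ev}_1$ via the diffeomorphisms $\Gamma$ and $\mathrm{sc}$ matches the paper, and your section $\sigma_c(w)(t) = \beta(t)\, R_{c(t)*}R_{c(1)*}^{-1}(w)$ is a correct continuous linear right inverse of $T_c\mathrm{ev}_1$ landing in $T_cC^\infty_*(I,G)$. The gap is in the step from this tangent-level splitting to the conclusion that $r$ is a submersion and that the preimage theorem applies. On Fr\'{e}chet manifolds there is no inverse function theorem, so ``$T_c\mathrm{ev}_1$ is a split surjection at every $c$'' does not imply the existence of submersion charts, and one cannot ``straighten $r$ using $\sigma_c$ as a local transversal'' as you propose; that straightening is exactly an application of the implicit function theorem, which is unavailable here. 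You correctly identify this as the main obstacle, but the section $\sigma_c$ alone does not remove it. (The citation to \cite[Theorem 7.9]{MR3351079} does not help either: that result concerns the manifold structure on mapping spaces, not submersions or regular values.)

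The paper avoids this issue by working at the level of charts rather than tangent maps: conjugating $\mathrm{ev}_1$ by the chart $L_c\circ\psi_*^{-1}$ turns it into $\widetilde{\mathrm{ev}}_1\colon \{h\in C^\infty(I,\LieA)\mid h(0)=0\}\to\LieA$, $h\mapsto h(1)$, which is a \emph{continuous linear} map; the global linear isomorphism $\lambda(h) = \bigl(t\mapsto h(t)-th(1),\, h(1)\bigr)$ then exhibits $\widetilde{\mathrm{ev}}_1$ as literally a projection onto a complemented factor. A continuous linear projection is a submersion by inspection, no inversion of nonlinear maps required, and \cite[Theorem D]{glockner15fos} then gives the closed submanifold. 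Your argument can be salvaged in the finite-dimensional case by instead citing \cite[Theorem A]{glockner15fos} (as the paper notes in a footnote), which says that for a map into a \emph{finite-dimensional} Lie group, surjectivity of the differential at every point already implies the submersion property; but as written, your appeal to a ``standard preimage theorem for submersions on Fr\'{e}chet manifolds'' assumes precisely the statement that needs proof, and it does not extend to the Hilbert Lie group setting you also claim.
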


\begin{proof}
 Let us first establish the smoothness of $r$. 
 In the proof of Lemma \ref{lem: ind:diffeo} we have already remarked that $\Gamma$ and $\mathrm{sc}$ are (smooth) diffeomorphisms. 
 Further, $\mathrm{ev}_1$ is smooth as a consequence of \cite[Theorem 42.13]{kriegl97tcs} whence it restricts to a smooth map on the closed submanifold $C_{*}^{\infty}(I,G)$.
 By abuse of notation we will denote the induced map on the submanifold also by $\mathrm{ev}_1$.
 We conclude that $r$ is smooth and in particular it will be a submersion if and only if $\mathrm{ev}_1$ is a submersion.
  
 For general locally convex (infinite-dimensional) manifolds a map is called submersion (see \cite{glockner15fos}) if for every point in its domain we can find submersion charts, i.e.\ charts around the point and its image which turn the map locally into a projection.\footnote{In the case of a finite-dimensional Lie group $G$, \cite[Theorem A]{glockner15fos} asserts that a sufficient condition for $\mathrm{ev}_1 \colon C^\infty_* (I,G)\rightarrow G$ to be a submersion is that its differential at every point is surjective. This is easily verified.}
 We will now construct submersion charts for $c \in C^\infty_* (I, G)$. 
 To this end, let $\psi \colon G \supseteq V \rightarrow W \subseteq \mathfrak{g}$ be a chart for $G$ around the identity. 
 Then 
 \begin{displaymath}
  \psi_* \colon C^\infty_* (I,G) \cap C^\infty (I,V) \rightarrow \{h \in C^\infty (I,\LieA) \mid h(0)=0\} \cap C^\infty (I,W) ,\quad  f \mapsto \psi \circ f
 \end{displaymath}
 is an identity chart for $C^\infty_* (I,G)$.
 Since $C^\infty_* (I, G)$ is a Lie group, translating $\psi_*$ by $c$ yields a chart centered at $c$.
 Denote by $\widetilde{\mathrm{ev}}_1 \colon \{h \in C^\infty (I,\LieA) \mid h(0)=0\}  \rightarrow \LieA, h \mapsto h(1)$ the point evaluation in $1$.
 We compute for $v \in \{h \in C^\infty (I,\LieA) \mid h(0)=0\}$
 \begin{displaymath}
  \mathrm{ev}_1 \circ L_{c} \circ \psi^{-1} (v) =  \mathrm{ev}_1 \circ L_{c} (\psi^{-1} \circ v) = c(1) \psi^{-1} (v(1)) = (L_{c(1)} \circ \psi^{-1}) \widetilde{\mathrm{ev}}_1 (v) . 
 \end{displaymath}
 As $L_c, L_{c(1)}$ and the charts $\psi$, $\psi_*$ are diffeomorphisms, it suffices to construct submersion charts for $\widetilde{\mathrm{ev}}_1$.
 To construct these charts, we split $\{h \in C^\infty (I,\LieA) \mid h(0)=0\}$ non canonically:  
 \begin{align*}
  \lambda \colon \{h \in C^\infty (I,\LieA) \mid h(0)=0\}  &\rightarrow \{h \in C^\infty(I,\LieA) \mid h(0)=h(1)=0\} \times \LieA,\\
   h &\mapsto (t \mapsto h(t) -th(1), h(1))
 \end{align*}
 Since $\widetilde{\mathrm{ev}}_1$ is smooth, $\lambda$ is smooth. 
 Furthermore, $\lambda$ is a isomorphism of locally convex spaces as $\lambda^{-1} (c,q) = (t\mapsto c(t)+tq)$ is also smooth.
 Now let $\mathrm{pr}_\LieA$ be the projection onto $\LieA$ in the above product.
 Then one computes $\widetilde{\mathrm{ev}}_1 \circ \lambda = \mathrm{pr}_\LieA$.
 Hence, $\lambda$ and $\mathrm{id}_\LieA$ form a pair of submersion charts for $\widetilde{\mathrm{ev}}_1$, whence $\widetilde{\mathrm{ev}}_1$ and $\mathrm{ev}_1$ are submersions.
 We deduce that $r$ is a submersion.
 
 To prove the final assertion we use that $\mathcal{C}^c = r^{-1} (e)$ is the preimage of a point under the submersion $r$. 
 Invoking a version of the regular value theorem \cite[Theorem D]{glockner15fos} for infinite-dimensional manifolds, we deduce that $\mathcal{C}^c$ is a closed submanifold of $\mathcal{C}^o$ (whence also of $C^\infty (I,\LieA)$).
 If $\LieG$ is finite-dimensional, $\mathcal{C}^c$ is a submanifold of finite codimension.
\end{proof}

\begin{remark}
 The right Lie group action $\PSpace_* \times \Diffeom \rightarrow \PSpace_*, \quad (c,\varphi) \mapsto c \circ \varphi$ restricts to an action on the closed curves $\{c \in \mathrm{Imm} (I,\LieG)\mid c(0)=e=c(1)\}$.
 Hence the pullback Riemannian metric \eqref{Eq:ElasticMetric} induces a reparametrisation invariant Riemannian metric on the space of closed curves (equivalently on $\mathcal{C}^c$).
 We remark that $\mathcal{C}^c$ will in general not be flat in the sense of Riemannian geometry (cf.\ the computation of curvature in \cite[5.2]{bauer_constructing_2014}).
\end{remark}

\begin{remark}[The framework for infinite-dimensional Lie groups]\label{rem: frame:infty}
If we assume that $G$ is a Hilbert Lie group, the results obtained so far carry over without any changes in the proofs.
Here by Hilbert Lie group we mean a Lie group in the sense of \cite[Section 36]{kriegl97tcs} or equivalently \cite{MR2261066} modelled on a Hilbert space.

More generally, some of the results are still valid for Banach Lie groups which are modeled on a Banach space with a smooth norm away from $0$ (see \cite[Chapter 13]{kriegl97tcs} for more information on smooth norms).
Notice however that in this case complications arise. For example, in the Banach Lie group case we lack an inner product. 
Hence, one has to replace the formula for the derivative of the norm in the proof of Theorem \ref{thm: pbmetric} and it is unclear how to relate the distance $d_{\PSpace_*}$ to the geodesic distance from Riemannian geometry.
\end{remark}

We address now the problem of projecting  open curves of $ \mathcal{C}^o$ onto the submanifold of closed curves $\mathcal{C}^c$. In theory, we could define a projection from $\mathcal{C}^o$ onto $\mathcal{C}^c$ by stating a constrained minimization problem
$$\min_{q \in \mathcal{C}^c}\, \frac{1}{2}\|q-q_0\|,$$
where $q_0$ is the curve to be approximated. Instead of minimizing the distance from closed curves to $q_0$ in what follows we opt for minimizing the closure constraint. This approach leads to the formulation of a gradient flow whose solution is the desired closed curve.

\subsection{Projection via a gradient flow}

Given an SRV representative $q$ of an open curve in the Lie group $\LieG$, i.e., $q \in \mathcal{C}^o \setminus \mathcal{C}^c$, we will try to ``close'' the curve by enforcing (at least approximately) the closedness constraint $r(q)=\SRVTInv(q)(1)=e$.

We can measure the distance between the identity and the actual endpoint by using the functional
\begin{equation}\label{Eq:ErrorFunctional}
    \Phi:\mathcal{C}^{o}\rightarrow \mathbb{R},\qquad \Phi(q):= \dfrac{1}{2} \| \log \left( \SRVTInv( q )( 1 )\right)\|^2,
\end{equation}
where $\log$ denotes the inverse of the exponential map $\exp:\mathfrak{g}\rightarrow G$, and is defined in a neighbourhood of the identity of $G$. Notice that
$$\Phi(q) = 0 \iff q\in \mathcal{C}^c.$$
Note also, that this only measures first order continuity.

In many practical applications, we are satisfied with $\Phi(q) \approx 0$; particularly in the computer animation applications discussed later in Section \ref{Sec:NumericalResults} a precise enforcement of the constraint $\Phi(q) = 0$ is not necessary to achieve visually pleasing results.
This leads us to the idea of closing curves by minimizing the functional $\Phi$, and since $\mathcal{C}^o$ is an open subset of a vector space, we can use a straightforward gradient descent method for solving this problem.

In order to compute the gradient of $\Phi$, we will need both the tangent map of the evolution operator $\Gamma$, for which we get \cite[38.10 Corollary]{kriegl97tcs}:
\begin{equation}\label{Eq:TGamma}
    T_q \Gamma( f )(t) = R_{ \Gamma(q)(t)* } \left(  \int_0^t\mathrm{Ad}_{ \Gamma(q)(x)^{-1} }( f(x) )\, dx \right),
\end{equation}
as well as the following tangent map for $\SRVTInv$:
\begin{equation}\label{Eq:TSRVTInv}
    T_q \SRVTInv f = T_{q \| q \|} \Gamma \left( f \| q \| + \langle f, q \rangle \dfrac{q}{\|q\|} \right).
\end{equation}
Here, $\mathrm{Ad}_g$ denotes the adjoint representation of the Lie group: ${\mathrm{Ad}_g = \LeftTrans_{g*} \circ \RightTrans^{-1}_{g*}}$.
Since $T_q r = T_q \mathrm{ev}_1 \circ \SRVTInv$ the formula \eqref{Eq:TSRVTInv} immediately yields a formula for $T_qr$.
Moreover, as the tangent map of $\mathrm{ev}_1$ can be canonically identified with an evaluation in $1$ (cf.\ proof of Proposition \ref{prop: curves:smfd}), we see that $T_qr$ is given integrating to $t=1$ in the formula \eqref{Eq:TSRVTInv}.\medskip

Furthermore, we need the tangent of the logarithm map.
To this end, let us first consider the tangent of the Lie group exponential map $T\exp \colon T\LieA = \LieA \times \LieA \rightarrow T\LieG$.
Using the right trivialisation of the tangent Lie group $TG = \LieA \rtimes_{\mathrm{Ad}} \LieG$, we have 
  \begin{equation} \label{eq: trivexp:def}
   T_u\exp (v) = R_{\exp (u)*} \circ \dexp_u (v),
  \end{equation}
for a unique (linear) map $\dexp_u: \LieA \rightarrow \LieA$ called the \emph{right trivialized tangent of the exponential map}. 
Following \cite{hausdorff_symbolische,celledoni_introduction_2014} we remark that $\dexp_u$ satisfies:
\begin{equation}\label{eq: dexp:series}\begin{aligned}
    \dexp_u (v) &= v + \dfrac{1}{2} [u, v] + \dfrac{1}{6} [u, [u, v]] + \ldots\\
    &= \sum^\infty_{k=0} \frac{1}{(k+1)!}     \mathrm{ad}^k_u v = \left. \frac{ \exp(z) - 1 } { z } \right|_{z = \mathrm{ad}_u} \mkern-36mu (v),
    \end{aligned}
\end{equation}
where $\mathrm{ad}_u$ is the derived representation of the Lie algebra $\mathrm{ad}_u v := [u,v]$.
By \cite[Remark II.5.8]{MR2261066} the formula \eqref{eq: dexp:series} holds also for infinite-dimensional Lie groups modelled on Banach spaces.
Note that $\mathrm{dexp}_u(u) = u$ and $\mathrm{dexp}_u = \delta^r (\exp)_u$ (cf.\ \cite[p.340 - 341]{MR2261066}).

By using that $\exp \circ \log = \mathrm{id}_\LieG$ in a neighbourhood of the identity in $\LieG$, \eqref{eq: trivexp:def} yields 
\begin{equation}\label{Eq:TLog}
    T_g \log = \dexp^{-1}_{ \log(g) } \left( \RightTrans_{ g^{-1} *} \right), \quad \text{for all } g \text{ in the domain of } \log. 
\end{equation}
Combining these tangent maps, we can calculate the gradient of the error functional $\Phi$ in \eqref{Eq:ErrorFunctional}.
We use the following notation for adjoints of bounded linear operators.

\begin{definition}
 Let $A \colon \LieA \rightarrow \LieA$ be a bounded linear operator and $\langle \cdot, \cdot \rangle$ be an inner product on $\LieA$. 
 Then we denote by $A^{\adj}$ the adjoint operator of $A$, i.e.\ for all $v,w \in \LieA$ we have $\langle A (v), w \rangle = \langle v, A^{\adj} (w)\rangle$.
\end{definition}

\begin{theorem}\label{thm: errorfun}
The gradient of the the error functional 
\[
    \Phi:\mathcal{C}^{o}\rightarrow \mathbb{R},\qquad \Phi(q):= \dfrac{1}{2} \| \log \left( \SRVTInv( q )( 1 )\right)\|^2,
\] 
with respect to the $L_2$ inner product is the vector field on $\mathcal{C}^o$ given by
\begin{equation}\label{Eq:GradientFlownew}
    \mathrm{grad} ( \Phi )( q ) =   \| q \| \, \theta(q) + \langle \theta(q), \dfrac{q}{ \| q \| } \rangle \, q,
\end{equation} where 
\begin{align*}
    \theta(q) &:= \mathrm{Ad}^{\adj}_{c(q)^{-1}} \, \mathrm{Ad}^{\adj}_{\LQ} \, (\dexp^{-1}_{ \log( \LQ ) })^{\adj} \,\left( \log ( \LQ ) \right) \in C^\infty(I, \LieA) \\
    c(q) &:= \SRVTInv ( q ) \in C^\infty_*(I, \LieG) \quad \text{and} \quad r(q) := \SRVTInv ( q ) ( 1 ) \in \LieG.
\end{align*}
\end{theorem}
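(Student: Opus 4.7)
The plan is to apply the chain rule to the composition $\Phi = \tfrac12\|\cdot\|^2 \circ \log \circ \mathrm{ev}_1 \circ \SRVTInv$ and then identify the $L^2$-representer of the resulting linear functional on $T_q\mathcal{C}^o = C^\infty(I,\mathfrak{g})$. Since the derivative of $\tfrac12\|\cdot\|^2$ at $u\in\mathfrak{g}$ is the inner product against $u$, the first step yields
\[
    T_q\Phi(f) \;=\; \bigl\langle \log r(q),\; T_{r(q)}\log \circ T_q\mathrm{ev}_1 \circ T_q\SRVTInv(f) \bigr\rangle_{\mathfrak{g}}.
\]

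Next I would substitute the three tangent maps in sequence. From \eqref{Eq:TSRVTInv}, $T_q\SRVTInv(f) = T_{q\|q\|}\Gamma(\tilde f)$ with $\tilde f := \|q\|\,f + \langle f,q\rangle\,q/\|q\|$. Evaluating at $t=1$ and applying \eqref{Eq:TGamma} gives a tangent vector in $T_{r(q)}G$, which is pulled back to $\mathfrak{g}$ by \eqref{Eq:TLog}. Tracking the right- and left-translation factors carefully (they interact via the identity $R_{g^{-1}*}\circ L_{g*} = \mathrm{Ad}_g$ on $\mathfrak{g}$), the composition simplifies to
\[
    T_{r(q)}\log \circ T_q\mathrm{ev}_1 \circ T_q\SRVTInv(f) \;=\; \dexp^{-1}_{\log r(q)} \int_0^1 \mathrm{Ad}_{r(q)}\,\mathrm{Ad}_{c(q)(x)^{-1}}\,\tilde f(x)\,dx \;\in\; \mathfrak{g}.
\]

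The heart of the proof is then a sequence of three adjoint transpositions under the fixed inner product on $\mathfrak{g}$: moving $\dexp^{-1}_{\log r(q)}$, then $\mathrm{Ad}_{r(q)}$, then $\mathrm{Ad}_{c(q)(x)^{-1}}$ onto the $\log r(q)$ side via $\langle A u, v\rangle = \langle u, A^{\adj} v\rangle$, and exchanging the pointwise pairing with the integral in $x$. This transforms $T_q\Phi(f)$ into $\int_0^1 \langle \theta(q)(x),\,\tilde f(x)\rangle\,dx$ with $\theta(q)$ as defined in the statement. A final direct computation substituting $\tilde f = \|q\|\,f + \langle f,q\rangle\,q/\|q\|$ and regrouping
\[
    \|q\|\langle \theta(q), f\rangle + \tfrac{\langle f,q\rangle}{\|q\|}\langle \theta(q), q\rangle
    \;=\; \Bigl\langle \|q\|\,\theta(q) + \Bigl\langle \theta(q),\tfrac{q}{\|q\|}\Bigr\rangle q,\; f \Bigr\rangle
\]
rewrites the integral as a single $L^2$-pairing, which by definition of the $L^2$-gradient yields \eqref{Eq:GradientFlownew}.

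The most delicate step is the correct bookkeeping of the translation factors when composing \eqref{Eq:TGamma} with \eqref{Eq:TLog}: this is precisely where the additional $\mathrm{Ad}^{\adj}_{r(q)}$ factor in $\theta(q)$ arises, and a naive cancellation of $R_{r(q)*}$ against $R_{r(q)^{-1}*}$ would miss it. Beyond this, the argument is a mechanical application of the chain rule and the defining property of the adjoint operator.
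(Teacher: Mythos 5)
Your proposal is correct and follows essentially the same route as the paper's proof: chain rule through $\log \circ \mathrm{ev}_1 \circ \SRVTInv$, transposition of the linear operators onto $\log(r(q))$ via the adjoint, interchange of the pointwise pairing with the (weak) integral, and the final regrouping of $\tilde f = \|q\| f + \langle f,q\rangle q/\|q\|$. In particular you correctly isolate the one delicate point — the surviving $\mathrm{Ad}^{\adj}_{r(q)}$ factor arising from $R_{r(q)^{-1}*}\circ L_{r(q)*} = \mathrm{Ad}_{r(q)}$, since the variation formula for $\Gamma$ carries a left translation — which is exactly how the paper's computation proceeds.
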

\begin{proof}
Let $f$ be a vector field along the curve $q$.
The $L_2$ gradient of $\Phi$ is then defined by
\[
    T_q \Phi (f) = \langle \mathrm{grad} ( \Phi ) ( q ), f \rangle_{L_2} = \int_I \langle \mathrm{grad} ( \Phi ) ( q ), f \rangle \, dx,
\]
and
\[
    T_q \Phi (f) :=  \left. \dfrac{\dif}{\dif\epsilon}\right|_{\epsilon=0} \dfrac{1}{2} \left< \log( \SRVTInv( q + \epsilon f ) (1) ), \, \log( \SRVTInv( q + \epsilon f ) (1) ) \right>.
\]

By differentiation we obtain:
\begin{align*}
    T_q \Phi ( f ) &\stackrel{\hphantom{\eqref{Eq:TLog}}}{=} \left\langle T_q (\log \circ \, r)( f ), (\log \circ \, r) (q) \right\rangle \\
    &\stackrel{\eqref{Eq:TLog}}{=}  \left< \dexp^{-1}_{ \log( \LQ ) } \, \RightTrans_{ \LQ^{-1} *} \, T_q r \, (f), \log ( \LQ ) \right>\\
    &\stackrel{\hphantom{\eqref{Eq:TLog}}}{=} \left< T_q r \, (f), \RightTrans^{\adj}_{ \LQ^{-1} *} (\dexp^{-1}_{ \log( \LQ ) })^{\adj}\log ( \LQ ) \right> .
\end{align*}
Insert now the formula for the tangent maps \eqref{Eq:TGamma} and \eqref{Eq:TSRVTInv} and observe that the integral appearing is a weak integral, i.e.\ we may interchanged the integral with any of the continuous linear functionals $\langle \cdot, v\rangle$.
Hence the above formula for $T_q\Phi$ simplifies as follows:
\begin{align*}
  T_q \Phi ( f )   &=  \left< \, \LeftTrans_{\LQ*} \int_I \mathrm{Ad}_{ c(q)(x)^{-1} } \left( f(x) \| q(x) \| + \langle f(x), q(x) \rangle \dfrac{q(x)}{ \| q(x) \| } \right) \, dx, \right. \\
    &\qquad \left.  \RightTrans^{\adj}_{ \LQ^{-1} *} (\dexp^{-1}_{ \log( \LQ ) })^{\adj} \log ( \LQ ) \vphantom{\int}\right> \\
    &= \int_I \left\langle \mathrm{Ad}_{ c(q)(x)^{-1} } \left( f(x) \| q(x) \| + \langle f(x), q(x) \rangle \dfrac{q(x)}{ \| q(x) \| } \right), \right.\\
    &\qquad  \left. \LeftTrans^{\adj}_{\LQ*} \, \RightTrans^{\adj}_{ \LQ^{-1} *} \, (\dexp^{-1}_{ \log( \LQ ) })^{\adj} \, \log ( \LQ ) \vphantom{ \dfrac{q}{\norm{q}} }\right\rangle \, dx\\
    &= \int_I \left\langle f(x) \| q(x) \| + \langle f(x), q(x) \rangle \dfrac{q(x)}{ \| q(x) \| }, \theta(q)(x) \right\rangle dx\\
    &= \int_I \langle f(x) \| q(x) \|, \theta(q, x) \rangle + \left\langle f(x), q(x) \right\rangle \left\langle \dfrac{q(x)}{ \| q(x) \| }, \theta(q)(x) \right\rangle dx\\
    &= \int_I \left< f(x), \theta(q)(x) \| q(x) \| + \left\langle \theta(q)(x), \dfrac{q}{ \| q(x) \| } \right\rangle q(x) \right> dx,
\end{align*}
Reading off the gradient of $\Phi(q)$ from this expression we derive:
\begin{displaymath}
\langle\mathrm{grad}(\Phi)(q),f \rangle_{L_2} =  \left< \theta(q) \| q \| + \langle \theta(q), \dfrac{q}{ \| q \| } \rangle q, f \right>_{L_2}. \qedhere
\end{displaymath}
\end{proof}

The projection onto the space of closed curves $\mathcal{C}$ is then obtained by solving the differential equation for $u(t,\tau)$:
\begin{equation*}
\dfrac{\partial u}{\partial \tau} = -\mathrm{grad} ( \Phi )( u ), \quad u(t, 0) = q(t).
\end{equation*}

\begin{remark}
 The results of the present section on tangent maps carry over verbatim to the case of infinite-dimensional Lie groups modelled on Banach spaces.
 This is due to the fact that Banach Lie groups are locally exponential Lie groups (i.e.\ the Lie group exponential map is a local diffeomorphism near the unit) cf.\ \cite[Proposition IV.1.2]{MR2261066}.
 Moreover, the formula for the trivialised tangent map of the exponential still holds in this case by \cite[Remark II.5.8]{MR2261066}.
 In particular, this shows that Theorem \ref{thm: errorfun} remains valid for Lie groups modelled on a Hilbert space.
\end{remark}

The derivative $\dexp_u$ of the Lie group exponential appears in the formula \eqref{Eq:GradientFlownew}.
If we want to apply the formula for the gradient in computations we thus need to compute this derivative. 
However, for certain finite dimensional Lie groups, these additional computations can be avoided as the derivative vanishes in the formula.
We will prove now that in  
\begin{displaymath}
 \theta (q) =\mathrm{Ad}^{\adj}_{c(q)^{-1}} \, \mathrm{Ad}^{\adj}_{\LQ} \, (\dexp^{-1}_{ \log( \LQ ) })^{\adj}\, \left( \log ( \LQ ) \right) 
\end{displaymath}
the term $(\dexp^{-1}_{ \log( \LQ ) })^{\adj}$ vanishes if we choose the inner product induced by the Cartan-Killing form of a compact and semisimple Lie group.

\begin{definition}
 Let $\LieG$ be a finite-dimensional Lie group with Lie algebra $\LieA$. Define the \emph{Cartan-Killing form} 
 \begin{displaymath}
  \kappa_\LieA \colon \LieA \times \LieA \rightarrow \mathbb{R} , \quad \kappa_\LieA (x,y) := \mathrm{tr}(\mathrm{ad}_x \circ \mathrm{ad}_y),
 \end{displaymath}
 where we denote by $\mathrm{tr}$ the trace of a linear map.
 Recall from \cite[Remark 12.2.14]{hilgert12sag} that $\kappa_\LieA$ is a negative definite form if and only if $\LieG$ is a compact and semisimple Lie group.
\end{definition}

\begin{corollary}\label{cor: errorfun}
Let $G$ be a compact and semisimple Lie group. With respect to the $L^2$-metric constructed from the inner product $-\kappa_\LieA$ induced by the Cartan-Killing form, the gradient of the the error functional $\Phi$ \eqref{Eq:ErrorFunctional} is the vector field on $\mathcal{C}^o$ given by
\begin{equation}\label{Eq:ClosingGradient}
    \mathrm{grad} ( \Phi )( q ) =   \| q \| \, \alpha(q) + \left\langle \alpha(q), \dfrac{q}{ \| q \| } \right\rangle \, q,
\end{equation} where 
\begin{align*}
    \alpha(q) &:= \mathrm{Ad}^{\adj}_{c(q)^{-1}} \, \mathrm{Ad}^{\adj}_{\LQ} \, \left( \log ( \LQ ) \right) \in C^\infty(I, \LieA) \\
    c(q) &:= \SRVTInv ( q ) \in C^\infty_*(I, \LieG) \quad \text{and} \quad r(q) := \SRVTInv ( q ) ( 1 ) \in \LieG.
\end{align*}
\end{corollary}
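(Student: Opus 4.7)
The plan is to show that the correction factor $(\dexp^{-1}_{\log(r(q))})^{\adj}$ appearing in $\theta(q)$ acts as the identity on the specific argument $\log(r(q))$, so that $\theta(q)=\alpha(q)$ and the result follows directly from Theorem~\ref{thm: errorfun}. The whole proof amounts to a single linear-algebraic identity built on top of the ad-invariance of the Cartan--Killing form.

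First I would fix the inner product $\langle\cdot,\cdot\rangle:=-\kappa_\LieA$, which is positive definite by the compactness and semisimplicity assumption on $G$. The fundamental property of $\kappa_\LieA$ to exploit is its ad-invariance $\kappa_\LieA([u,v],w)+\kappa_\LieA(v,[u,w])=0$, which translates directly into the statement that with respect to $\langle\cdot,\cdot\rangle$ the operator $\mathrm{ad}_u$ is skew-adjoint:
\begin{equation*}
(\mathrm{ad}_u)^{\adj}=-\mathrm{ad}_u.
\end{equation*}

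Next I would use the series expansion \eqref{eq: dexp:series} for $\dexp_u$. Taking the adjoint termwise (legitimate because $\LieA$ is finite-dimensional and each term is a polynomial in $\mathrm{ad}_u$) gives
\begin{equation*}
(\dexp_u)^{\adj}=\sum_{k=0}^{\infty}\frac{1}{(k+1)!}(-\mathrm{ad}_u)^k=\dexp_{-u},
\end{equation*}
and consequently $(\dexp_u^{-1})^{\adj}=(\dexp_{-u})^{-1}$. Now I would invoke the identity $\dexp_v(v)=v$ noted just after \eqref{eq: dexp:series}: applying it with $v=-u$ gives $\dexp_{-u}(-u)=-u$, hence $\dexp_{-u}^{-1}(u)=u$. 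Specialising to $u=\log(r(q))$, which lies in the domain of $\log$ so that all expressions are well defined, yields
\begin{equation*}
\bigl(\dexp^{-1}_{\log(r(q))}\bigr)^{\adj}\bigl(\log(r(q))\bigr)=\log(r(q)).
\end{equation*}

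Finally, substituting this identity into the definition of $\theta(q)$ in Theorem~\ref{thm: errorfun} collapses it to
\begin{equation*}
\theta(q)=\mathrm{Ad}^{\adj}_{c(q)^{-1}}\,\mathrm{Ad}^{\adj}_{r(q)}\bigl(\log(r(q))\bigr)=\alpha(q),
\end{equation*}
so formula \eqref{Eq:GradientFlownew} becomes \eqref{Eq:ClosingGradient}. The only point that needs any care is the termwise adjoint of the $\dexp$ series, but since we are on a finite-dimensional compact semisimple Lie group this is entirely routine; the real content is the fixed-point identity $\dexp_v(v)=v$ combined with the skew-adjointness of $\mathrm{ad}$ under $-\kappa_\LieA$.
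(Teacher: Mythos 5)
Your proposal is correct and follows essentially the same route as the paper's proof: skew-adjointness of $\mathrm{ad}_u$ with respect to $-\kappa_\LieA$, termwise adjoint of the $\dexp$ series, and the conclusion that the resulting operator fixes $u=\log(r(q))$. The only cosmetic difference is that you phrase the last step via the identity $\dexp_{-u}(-u)=-u$, while the paper observes directly that $\mathrm{ad}_u(u)=0$ kills every term of the series with $k\geq 1$; these are the same fact.
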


\begin{proof}
 As a shorthand we define $u := \log (\LQ)$.
 Having the formula \eqref{Eq:GradientFlownew} at our disposal, it suffices to prove that the adjoint operator $(\dexp_u^{-1})^{\adj}$ fixes $u$.
 As $(\dexp_u^{-1})^{\adj} = (\dexp_u^{\adj})^{-1}$ it suffices to prove that $\dexp_u^\dagger$ fixes $u$. 
 Recall from \cite[Exercise 5.4.5]{hilgert12sag} that the operators $\mathrm{ad}_x$ are skew symmetric with respect to the Cartan-Killing form, i.e.\ $\kappa_\LieA (\mathrm{ad}_x (y),z) = \kappa_\LieA (y, -\mathrm{ad}_x (z))$ or in other words $(\mathrm{ad}_x)^{\adj} = - \mathrm{ad}_x$.
 Using the series identity \eqref{eq: dexp:series} for $\dexp_u$ together with the fact that the mapping $A \mapsto A^{\adj}$ is a continuous algebra morphism, we obtain the identity 
 \begin{displaymath}
  (\dexp_u)^{\adj}\hspace{-2pt} = \left(\sum_{k=0}^\infty \frac{1}{(k+1)!} (\mathrm{ad}_u)^k\right)^{\adj}\hspace{-2pt} = \sum_{k=0}^\infty \frac{1}{(k+1)!} (\mathrm{ad}_u^{\adj})^k \hspace{-2pt}= \sum_{k=0}^\infty \frac{1}{(k+1)!} (-\mathrm{ad}_u)^k\hspace{-2pt}.
 \end{displaymath}
 Hence $\mathrm{ad}_u (u)= 0$ implies $ (\dexp_u)^{\adj} (u) = u$ and the assertion follows.
\end{proof}

\section{Numerical Results}\label{Sec:NumericalResults}
In this section we present some results obtained with the methods developed in the previous sections.

First, in Section \ref{Sec:ResultsSO3Curves}, a few simple examples of curves in $\SOT$ will serve to demonstrate the basic ideas discussed in this paper: interpolation between curves and closing of open curves. Next, we will present some numerical results for the specific applications to computer animation problems: interpolation between existing motions in Section \ref{Sec:ResultsMotionInterpolation} and removing discontinuities in (almost) periodic motions in Section \ref{Sec:ResultsClosingAnimations}.

\subsection{Some implementation notes}\label{Sec:ImplementationNotes}
The motion capturing data used in the two animation application Sections has been taken from the CMU motion capture database \cite{carnegie-mellon_carnegie-mellon_2003}.\medskip

\noindent
\textbf{Computational tools for $\SOT$.}
In the examples we consider the Lie group $\SOT$ (or a product of multiple copies of $\SOT$), for which we will use a matrix representation, so left and right translations correspond to matrix products.
The Lie algebra $\mathfrak{so}(3)$ consists of the $3 \times 3$ skew-symmetric matrices which are isomorphic to vectors in $\mathbb{R}^3$ via the hat map:
\[
x =
    \left( \begin{array}{c}
        x_1\\
        x_2\\
        x_3
    \end{array} \right)
    \mapsto \hat{x} = 
    \left( \begin{array}{ccc}
        0 & -x_3 & x_2 \\
        x_3 & 0 & -x_1 \\
        -x_2 & x_1 & 0
    \end{array} \right).
\]
Efficient ways to compute the exponential map and the logarithm in $\SOT$ are available in the literature, see \cite{celledoni_lie_1999, iserles_lie-group_2000, celledoni_introduction_2014}.
In this setting, the Lie algebra exponential $\exp: \sot \rightarrow \SOT$ can be efficiently computed using Rodrigues' formula:
\[
    \exp( \hat{x} ) = I + \dfrac{ \sin ( \theta ) }{ \theta } \hat{x} + 2 \dfrac{ \sin^2 ( \theta / 2 ) }{ \theta^2 } \hat{x}^2, \quad \theta := \norm{x}.
\]
Similarly, there exist efficient means to compute the logarithm of an orthogonal matrix $X \in \SOT$:
\[
    \log ( X ) = \dfrac{\sin^{-1}( \norm{y} ) }{ \norm{y} } \hat{y}, \qquad X \neq I, \quad \text{and } X \text{ close to } I,
\] where $\hat{y} = \frac{1}{2}(X - X^{\adj})$.

As in the previous section we denote here by $X^{\adj}$ the adjoint linear operator (i.e.\ the transpose of the matrix $X$).
\smallskip

The Lie group $\SOT$ is compact and semisimple (see \cite[Lemma 2.1.4 and Example 5.5.4]{hilgert12sag}).
To compute the gradient of the error functional $\Phi$ \eqref{Eq:ErrorFunctional} as in Corollary \ref{cor: errorfun}, we will thus assume that the inner product on $\LieA$ is the negative of the Cartan-Killing form $\kappa_\LieA$.
For $\SOT$ it is well known that the Cartan-Killing form is given by 
\begin{displaymath}
 \kappa_{\sot} (X,Y) := \mathrm{tr} (XY)
\end{displaymath}
As $X,Y$ are skew symmetric, the corresponding inner product turns out to be the familiar Frobenius inner product $-\kappa_{\sot} (X,Y) = \mathrm{tr} (XY^{\adj})$.
Notice, that one can proceed similarly if $G$ is a (finite) product of copies of $\SOT$.
\medskip

\noindent
\textbf{Discrete curves.}
Given a continuous curve $c$ in $\SOT$, we approximate it by picking a discretization $\{ \theta_i \}_{i=0}^n$ of $I$ and constructing a curve $\bar{c}$ based on discrete points $\{ \bar{c}_i := c(\theta_i) \}_{i=0}^n$, between which we interpolate along geodesics in $\SOT$:
\begin{equation}\label{Eq:PiecewiseGeodesicCurve}
    \bar{c}(t) := \sum_{k=0}^{n-1} \chi_{ [ \theta_k, \theta_{k+1} ) }(t) \,\, \exp \left( \dfrac{t - \theta_k}{ \theta_{k+1} - \theta_k } \log( \bar{c}_{k+1} \bar{c}_k^{\adj}) \right) \bar{c}_k ,
\end{equation} where $\chi$ is the characteristic function.

Computing the square root velocity transform \eqref{Eq:SRVT} of such discrete curves then results in piecewise constant functions $\bar{q} = \{ \bar{q}_i \}_{i=0}^{n-1}$ in the Lie algebra, with discrete points $\bar{q}_i$ given by:
\[
    \bar{q}_i := \dfrac{\eta_i}{\sqrt{ \norm{ \eta_i} }}, \quad \eta_i := \dfrac{\log( \bar{c}_{i+1} \bar{c}_{i}^{\adj} ), 
}{\theta_{i+1} - \theta_{i}}.
\]

The inverse SRVT \eqref{Eq:SRVTInv} of a piecewise constant function $\bar{q}$ in $\LieA$, is given by a piecewise geodesic curve $\bar{c}$ in $\LieG$, as formulated in \eqref{Eq:PiecewiseGeodesicCurve}, with points $\bar{c}_i$ given by:
\[
    \bar{c}_{i+1} = \exp( \norm{ \bar{q}_i } \bar{q}_i ) \, \bar{c}_i, \qquad i = 1,\ldots,n-1, \qquad \bar{c}_0 = e.
\]

\noindent 
\textbf{Curve reparametrization.}
As we have seen, reparametrizations in $\Diffeom$ act on curves from the right.
For discrete curves, this means either a change in the underlying grid or a change in the discrete points.
For our numerical experiments, we have chosen to keep a fixed grid and resample curve points.
Applying a reparametrization $\varphi \in \Diffeom$ to the discrete curve $\bar{c}$ then results in a new discrete curve $\tilde{c}$ with sampling points $\{ \tilde{c}_i \}_{i=0}^{n}$ computed using geodesic interpolation:
\[
   \tilde{c}_i := \bar{c}_j \exp( s \log(\bar{c}_{j+1} \bar{c}_j^{\adj}) ), \qquad s := \dfrac{ \varphi(\theta_i) - \theta_j } { \theta_{j+1} -
   \theta_j }, \qquad i=0,\ldots, n,
\] where $j$ is an index such that $\theta_j \leq \varphi( \theta_i ) < \theta_{j+1}$.
Note that $\tilde{c}_0 = \bar{c}_0$ and $\tilde{c}_n = \bar{c}_n$ naturally follow from the definition of the diffeomorphism group $\Diffeom$.
\medskip

\noindent
\textbf{Curve interpolation.}
To perform interpolation between two parametrized curves $c_0$ and $c_1$, we interpolate linearly between their SRVT representations and reintegrate the result:
\begin{align}
\begin{split}\label{Eq:LinearInterpolation}
    &[0,1] \times \PSpace_* \times \PSpace_* \rightarrow C^\infty_* (I,\LieG)\\
    &(s, c_0, c_1) \mapsto \SRVTInv ( (1-s) \SRVT( c_0 ) + s \SRVT( c_1 ) ),
\end{split}
\end{align} with interpolation parameter $s$.
The construction of the curve is possible due to the vector space structure of the Lie algebra $\LieA$ and yields precisely a geodesic of $C^\infty (I, \LieA)$ (cf.\ Proposition \ref{prop: curvature}).
Notice that the interpolation map takes its image in $C^\infty_* (I,\LieG)$ and not in $\PSpace^*$, i.e.\ in general the result will not again be an immersion. 
This is due to the fact that elements in $\PSpace_*$ take their image in $\LieA \setminus \{0\}$ and this set is not convex in $\LieA$.
In practice this problem is not very serious, however it seems difficult to exclude this problem without turning to a cumbersome set of conditions on the initial data. 
\medskip

\noindent
\textbf{Curve closing.}
As we have already mentioned the Lie groups in our main example are compact and semisimple, whence Corollary \ref{cor: errorfun} is applicable.
In particular, the gradient for the curve-closing method in Equation \eqref{Eq:ClosingGradient}, takes the following form for curves in $\SOT$:
\begin{align*}
    \mathrm{grad}(\Phi)(q) &=  \| q \| c \log ( c( 1 ) )c^{\adj} + \langle c \log ( c( 1 ) )c^{\adj}, \dfrac{q}{ \| q \| } \rangle \, q
\end{align*} where $c := \SRVTInv(q)$ (computed via Lie-Euler integration), and the gradient flow can be discretized as
\[
    \bar{u}^{k+1} = \bar{u}^k - \alpha_k\, \mathrm{grad}(\Phi)(\bar{u}^k),
\] where every $\bar{u}^k$ is a discrete curve as defined above, i.e., $\bar{u}^k = \{ \bar{u}^k_i \}_{i=0}^{n}$.
Note that for the curve closing, a curve's parametrization is fixed.

This iterative approach allows us to balance accuracy and computational expense, which is useful in the computer animation applications discussed in Section \ref{Sec:ResultsClosingAnimations}.

\subsection{Curves on \texorpdfstring{$\SOT$}{the special orthogonal group}}\label{Sec:ResultsSO3Curves}
We start with two simple demonstrations of the methods developed in this article: one interpolation between curves on $\SOT$ and one application of the curve-closing algorithm laid out in Section \ref{Sec:ClosedCurves}.
In order to visualize a curve $c: I \rightarrow \SOT$, we will take one or more unit vectors in $\mathbb{R}^3$, $v_1, \ldots, v_n$, and plot the curves resulting from transforming these vectors by the successive elements in $\SOT$, i.e., we will plot $\bar{v_i}(t) := c(t) v_i$ for $i=1,\ldots, v_n$.
The curves $\bar{v_i}(t)$ will therefore evolve on the unit sphere in $\mathbb{R}^3$.

\begin{figure}[h]

\begin{minipage}[b][6cm]{0.3\linewidth}
\subfloat{\includegraphics[trim=7cm 0 7 1cm, clip=true]{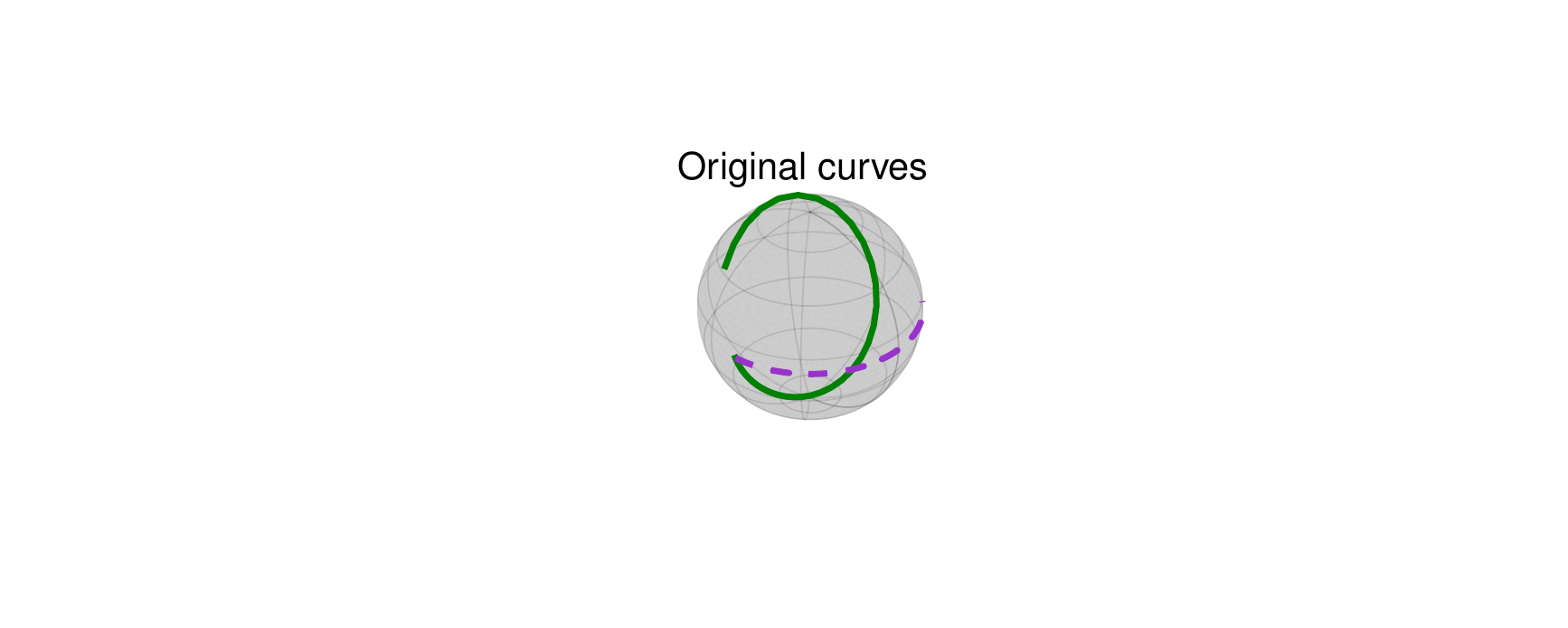}}
\end{minipage}
\begin{minipage}[b][6cm]{0.3\linewidth}
\subfloat{\includegraphics[trim=7cm 0 7 1cm, clip=true]{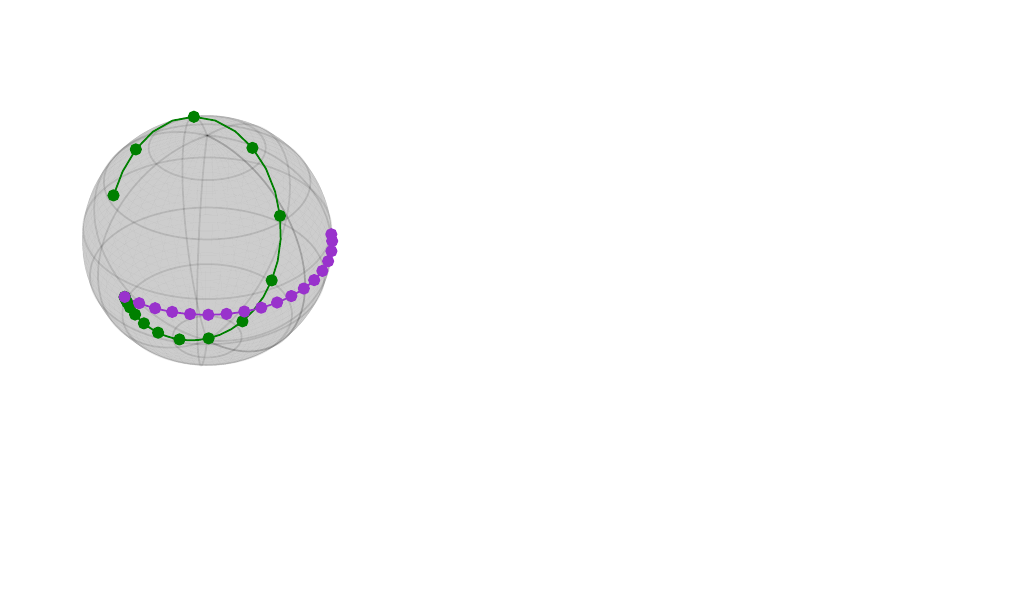}}
\end{minipage}
\begin{minipage}[b][6cm]{0.3\linewidth}
\subfloat{\includegraphics[trim=7cm 0 7 1cm, clip=true]{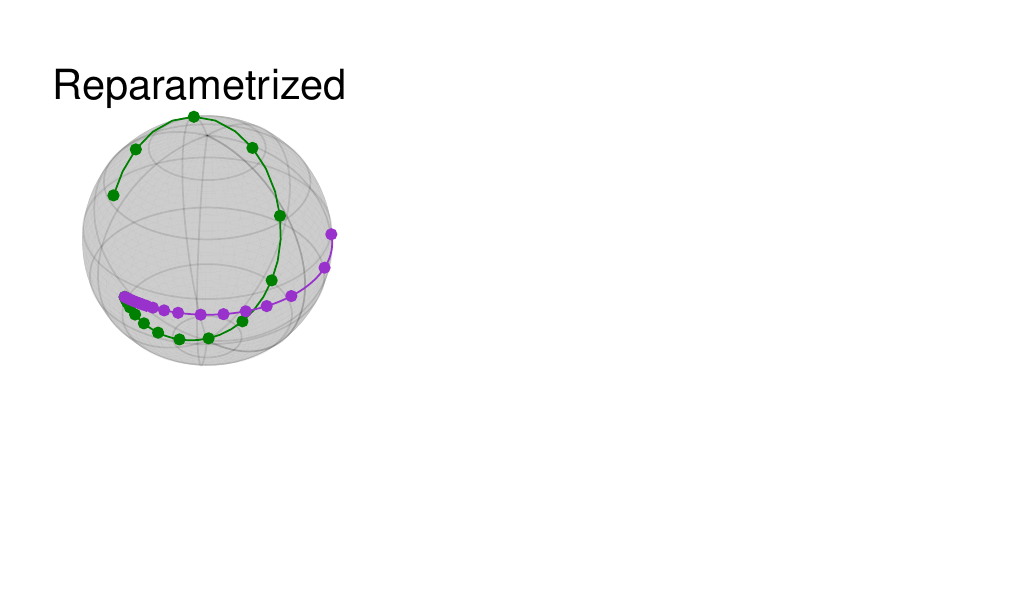}}
\end{minipage}

\vspace{-2.8cm}
\begin{minipage}[b][6cm]{0.3\linewidth}
\subfloat{\includegraphics[trim=7cm 0cm 7 1.5cm, clip=true]{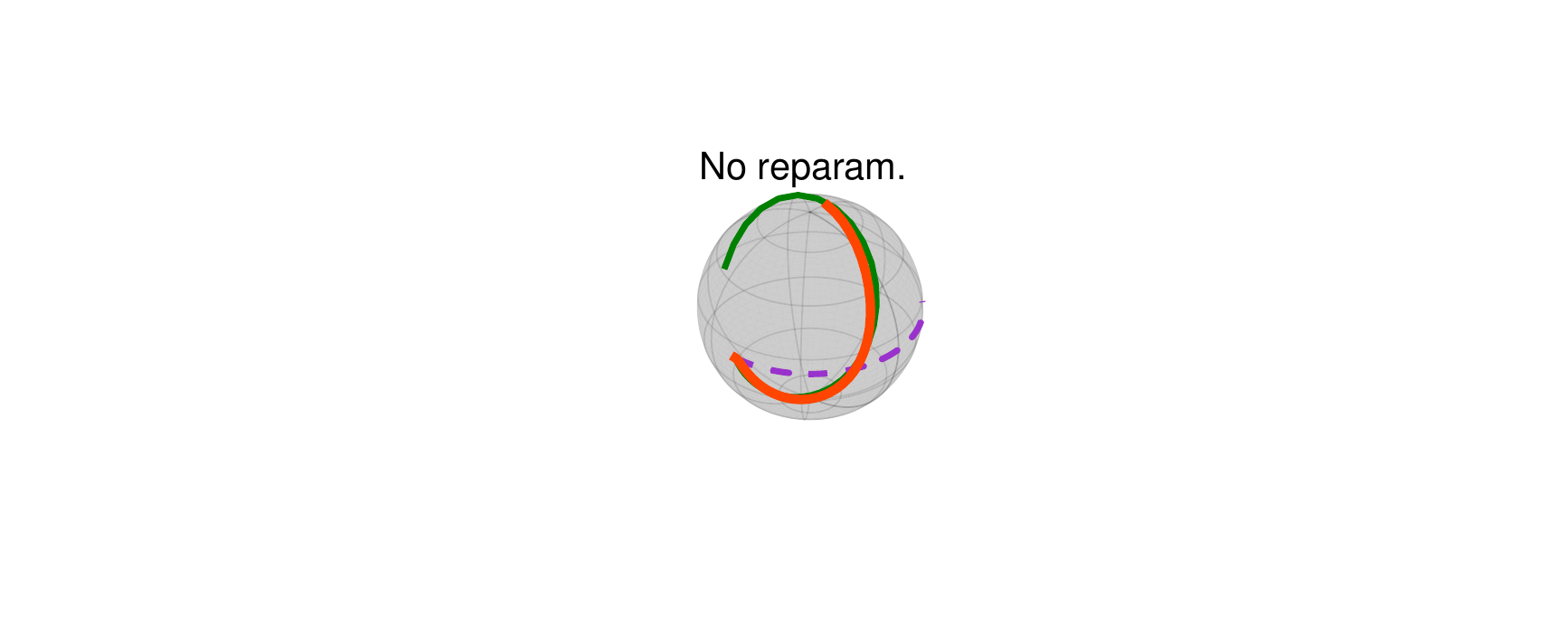}}
\end{minipage}
\begin{minipage}[b][6cm]{0.3\linewidth}
\subfloat{\includegraphics[trim=7cm 0cm 7 1.5cm, clip=true]{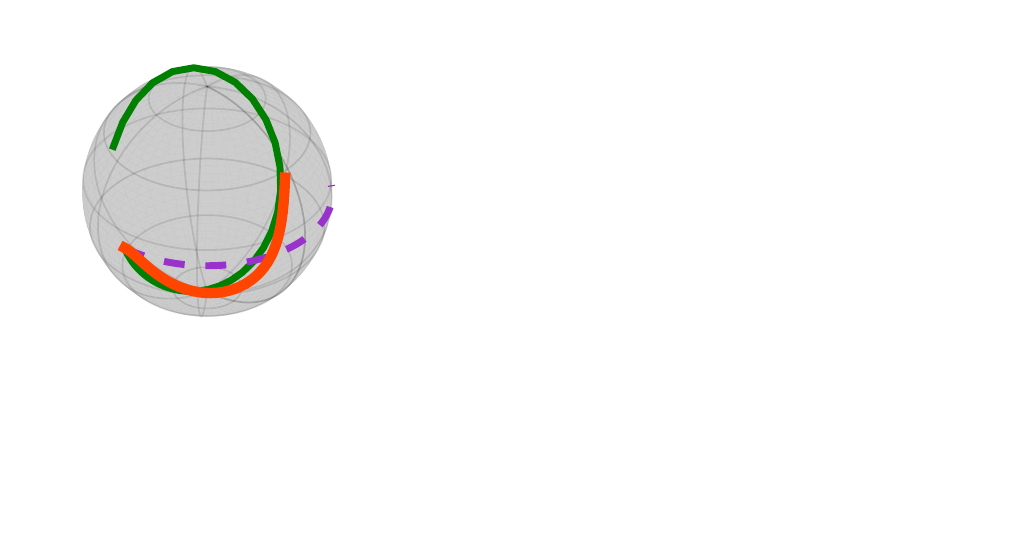}}
\end{minipage}
\begin{minipage}[b][6cm]{0.3\linewidth}
\subfloat{\includegraphics[trim=7cm 0cm 7 1.5cm, clip=true]{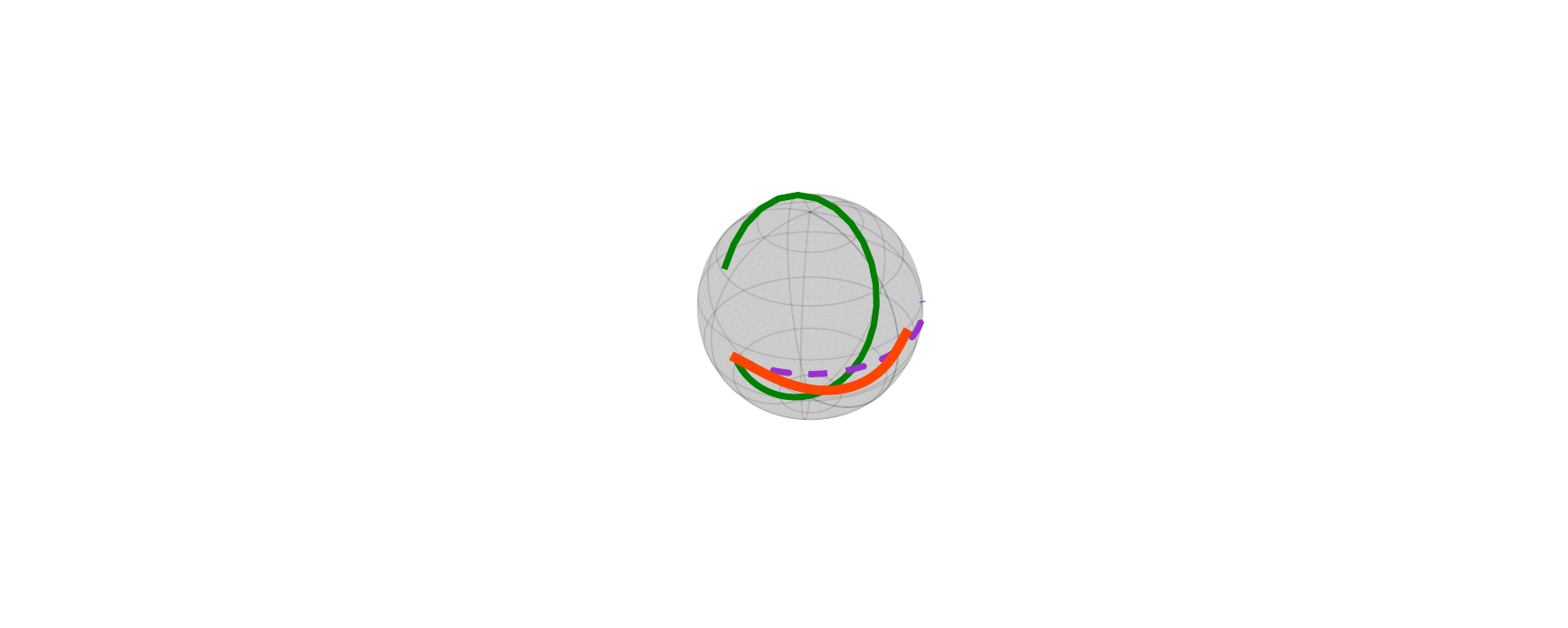}}
\end{minipage}

\vspace{-2.8cm}
\begin{minipage}[b][6cm]{0.3\linewidth}
\subfloat{\includegraphics[trim=7cm 0 7 1.5cm, clip=true]{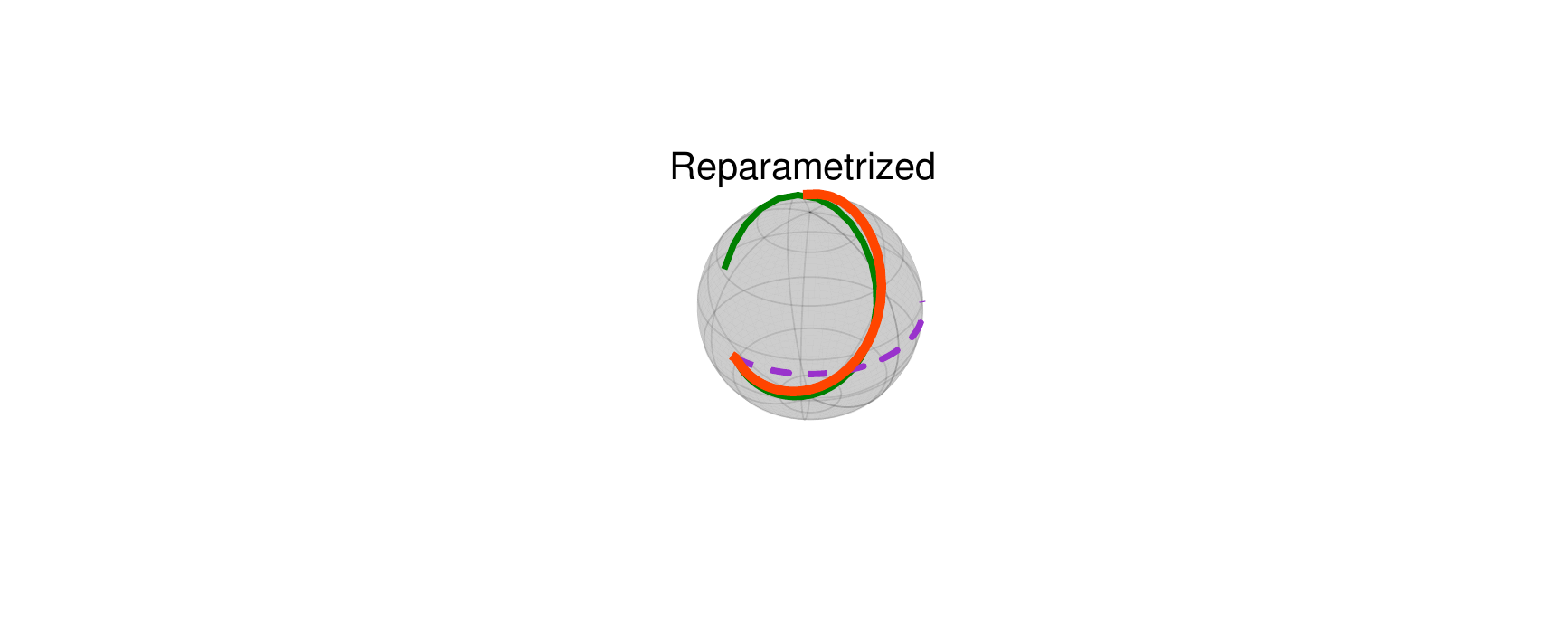}}
\end{minipage}
\begin{minipage}[b][6cm]{0.3\linewidth}
\subfloat{\includegraphics[trim=7cm 0 7 1.5cm, clip=true]{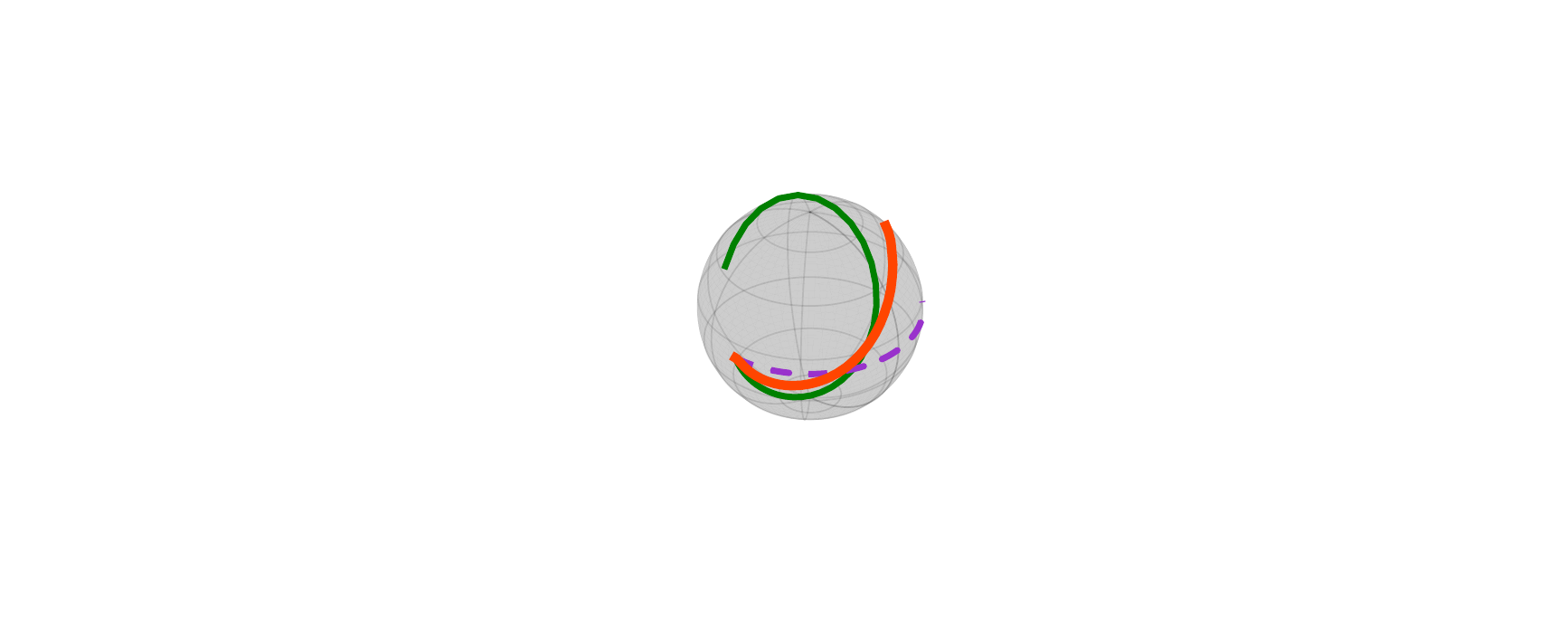}}
\end{minipage}
\begin{minipage}[b][6cm]{0.3\linewidth}
\subfloat{\includegraphics[trim=7cm 0cm 7 1.5cm, clip=true]{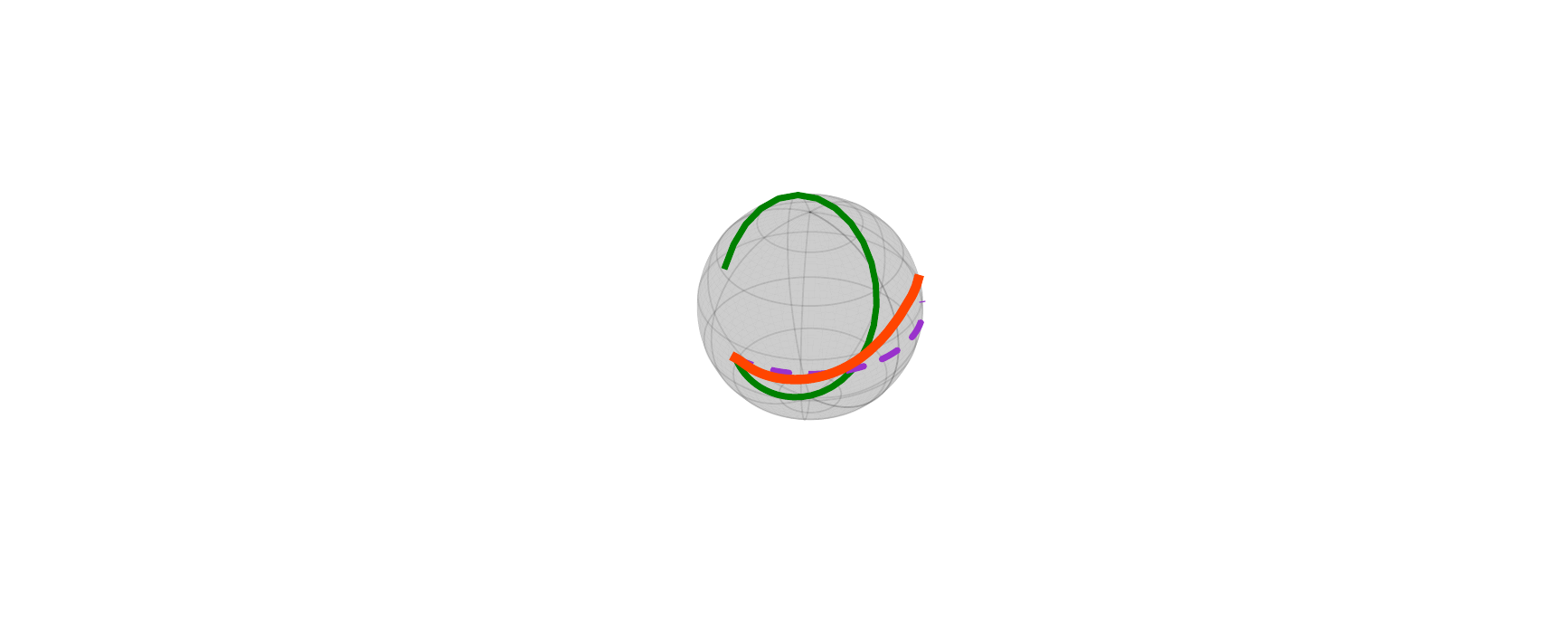}}
\end{minipage}

\vspace{-2.1cm}

\caption{\label{Fig:SO3Interpolation} Interpolation between two curves in $\SOT$ with and without reparametrization.
The curves shown here are the points traced out by the vector $(1,0,0)^T$ in $\mathbb{R}^3$ when transformed along the curves in the Lie group.
The top row shows the original curves, their parametrizations and a reparametrization performed to minimize the distance between the curves.
The thick red curves in the second and third rows are points along the geodesic path between the two original curves, at times $s \in \{1/4, 1/2, 3/4\},$ from left to right.
}
\end{figure}

Figure \ref{Fig:SO3Interpolation} shows the result of interpolating between curves ${c_1, c_2: I \rightarrow \SOT}$ using the approach outlined in Sections \ref{Sec:ShapeAnalysis} and \ref{Sec:ImplementationNotes}.

This is the basic mechanism underlying the shape distance computation: We represent shapes, i.e., equivalence classes of parametrized curves under reparametrizations, by a representative, i.e., a single parametrized curve.
Then, given two shapes and two corresponding parametrized curves, we try to find a parametrization of one of the curves that minimizes the distance to the other curve.
This minimum distance is then, according to Lemma \ref{lem:RepresentationIndependence}, the distance between the two shapes, i.e., the equivalence classes.

The two curves in the first row of Figure \ref{Fig:SO3Interpolation} represent the original curves between which we interpolate using Equation \eqref{Eq:LinearInterpolation}.
In the middle figure, we see their parametrizations - whereas the orange dashed curve has a uniform parametrization, the blue one is in a sense compressed in the beginning and then stretches out.
When interpolating between those two parametrized curves, as seen in the second row, the resulting interpolation first contracts and then expands.
In the right side figure in the first row, the lower curve has been reparametrized, using dynamic programming, to better match the two curves.
This minimizes the distance between the two curves as discussed in the previous section, and the third row of figures shows the corresponding interpolating path.
\begin{center}
\begin{figure}[hbtp]
\includegraphics[scale=0.9, trim = 2cm 0 0 0, clip=false]{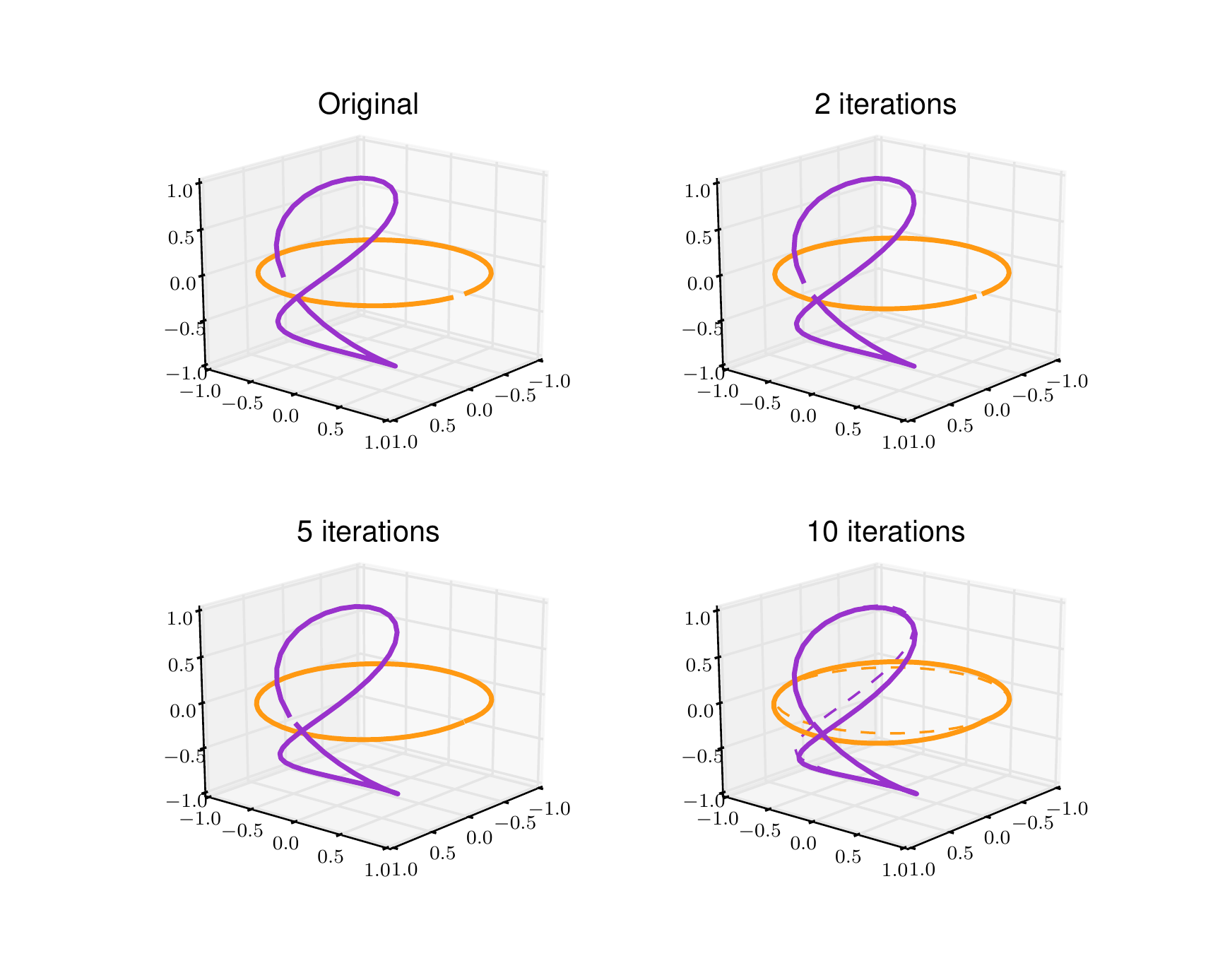}
\caption{\label{Fig:SO3Closing} Application of the closing algorithm to a single curve in $\SOT$.
The two curves shown here are the points traced out by the vectors $(1,0,0)^T$ and $(0,1,0)^T$ in $\mathbb{R}^3$ when transformed along the curve in the Lie group.
The top left figure shows the open starting curve, whereas the remaining panels show the evolution of the closing algorithm.
In the last panel, the original curves are superimposed as dashed lines to show the deviations caused by the closing method.
Note that for visualization purposes, a small stepsize was chosen, resulting in more iterations than would otherwise be necessary to achieve a satisfying accuracy.
}
\end{figure}
\end{center}
\afterpage{\clearpage}
In Figure \ref{Fig:SO3Closing}, we see the results of applying the curve closing algorithm to an open curve in $\SOT$.
We have plotted the results at different stages of the iterative algorithm to highlight the evolution of the closing process, i.e., how the curves iteratively move towards closedness.
In this toy-example, we have chosen a small stepsize to better show the behaviour of the algorithm.
In practice, just two to three iteration steps are typically enough to achieve a sufficient degree of closedness in the curve.

\subsection{Motion interpolation}\label{Sec:ResultsMotionInterpolation}

We now want to apply the interpolation method outlined earlier and demonstrated in the previous Section to entire motions of virtual characters.
This means that instead of curves in $\SOT$, we are now dealing with curves in $\SOT^d$, with a copy of $\SOT$ for every joint in the animated character.
The numerical methods as described in Section \ref{Sec:ImplementationNotes} stay the same however.

\begin{figure}[t]
\begin{center}
\includegraphics[scale=1, trim = 3cm 0 0 0, clip=false]{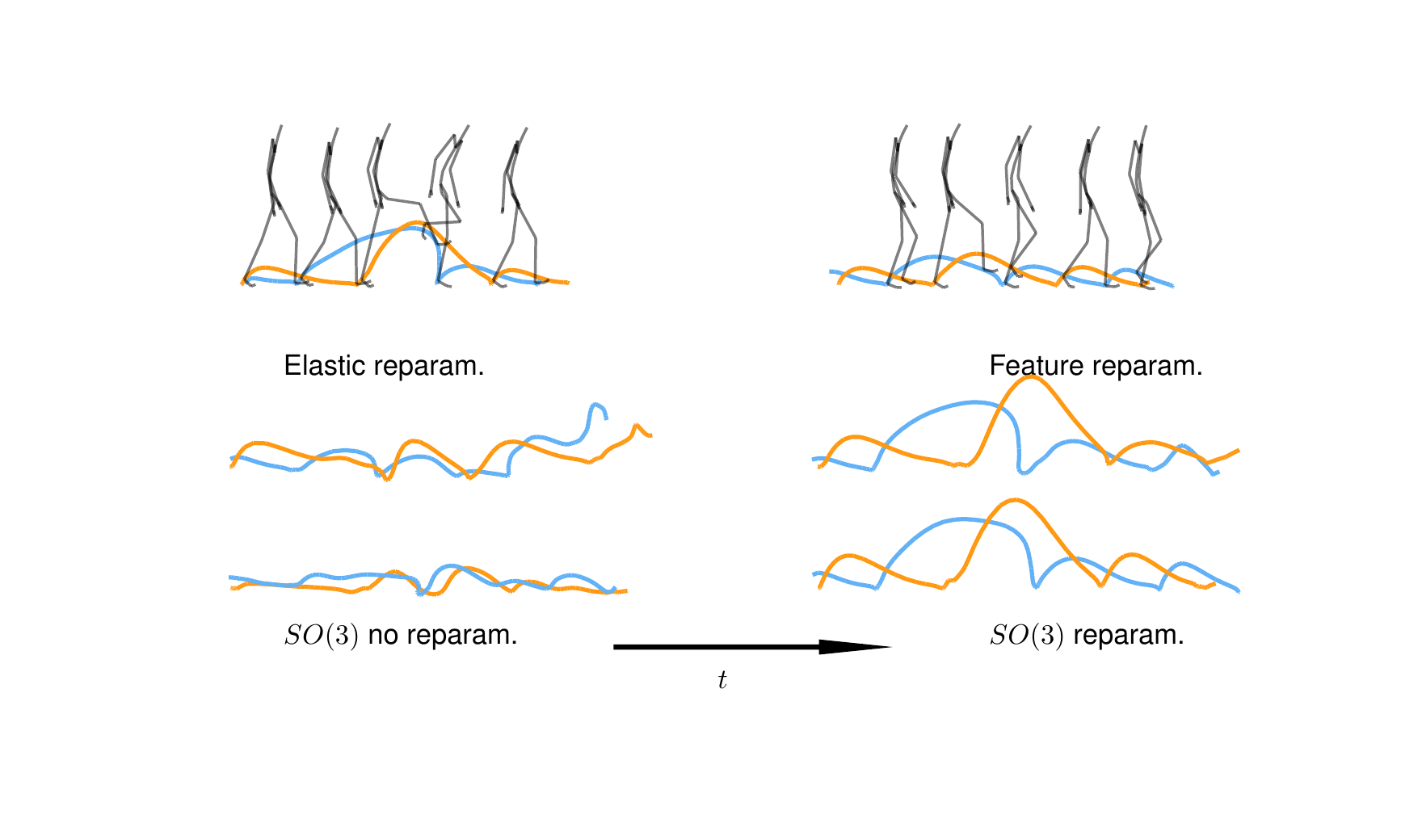}
\vspace{-2cm}
\caption{\label{Fig:ObstacleInterpolation} Interpolation between two walking animations, shown in the top row.
The two lines shown in the plots are the trajectories of both feet, with the big steps over the obstacles clearly visible.
Below, interpolations between the two animations computed using different methods are plotted.
The upper row shows existing results from \cite{bauer_landmark-guided_2015}, where additional feature point information was used to guide the interpolation.
The bottom row shows interpolation results for matching the $\SOT^d$ animations without (on the left) and with (on the right) reparametrization.
No additional feature point information was necessary for this method.
(Note that the feature matching approach by design requires reparametrization.)}
\end{center}
\end{figure}

In Figure \ref{Fig:ObstacleInterpolation} we start with two original motions: One with the animated character stepping over a high obstacle and one with the character stepping over a low obstacle.
We want to interpolate between those two motions, allowing us to let the character step over obstacles of arbitrary heights.
This problem was already considered in \cite{bauer_landmark-guided_2015}, where it was found that the approach of parametrizing animations with Euler angles and then performing shape analysis \cite{eslitzbichler_modelling_2014} produced unsatisfactory results for these motions and had to be augmented with extra landmark information to achieve a realistic interpolation result.
Here we find that, for this problem, the use of a Lie group formulation allows us to achieve similarly good results as in \cite{bauer_landmark-guided_2015}, but without the need for additional landmark information.
Figure \ref{Fig:ObstacleInterpolation} shows results for the Lie group formulation once without and once with reparametrizations.
Reparametrizations in the context of animations, also known as \emph{time warps} \cite{kovar_flexible_2003}, serve to align two animations on the time axis, i.e., they can speed up or slow down parts of an animation to match the motions more closely.
We see that in this case, reparametrizations are necessary to achieve satisfying blending results.
This is likely due to a slight phase shift in the beginning (compare the starting points of the trajectories on both original animations in Figure \ref{Fig:ObstacleInterpolation}).\\

A natural extension of this scheme would be to interpolate between multiple animations.
Among others, this could be useful for extended walking animations where combining multiple animations and varying the interpolation weights over time could help produce a large but consistent set of motions to avoid repetitive visual elements.
This could for example be accomplished by formulating this animation blending problem as a Karcher mean (\cite{srivastava_shape_2011, su_statistical_2014}) of multiple animation curves.
We will consider such a scheme in future work.

\subsection{Periodic motions}\label{Sec:ResultsClosingAnimations}

\begin{figure}
\begin{center}
\includegraphics[scale=1, trim = 2.5cm 0 0 0, clip=false]{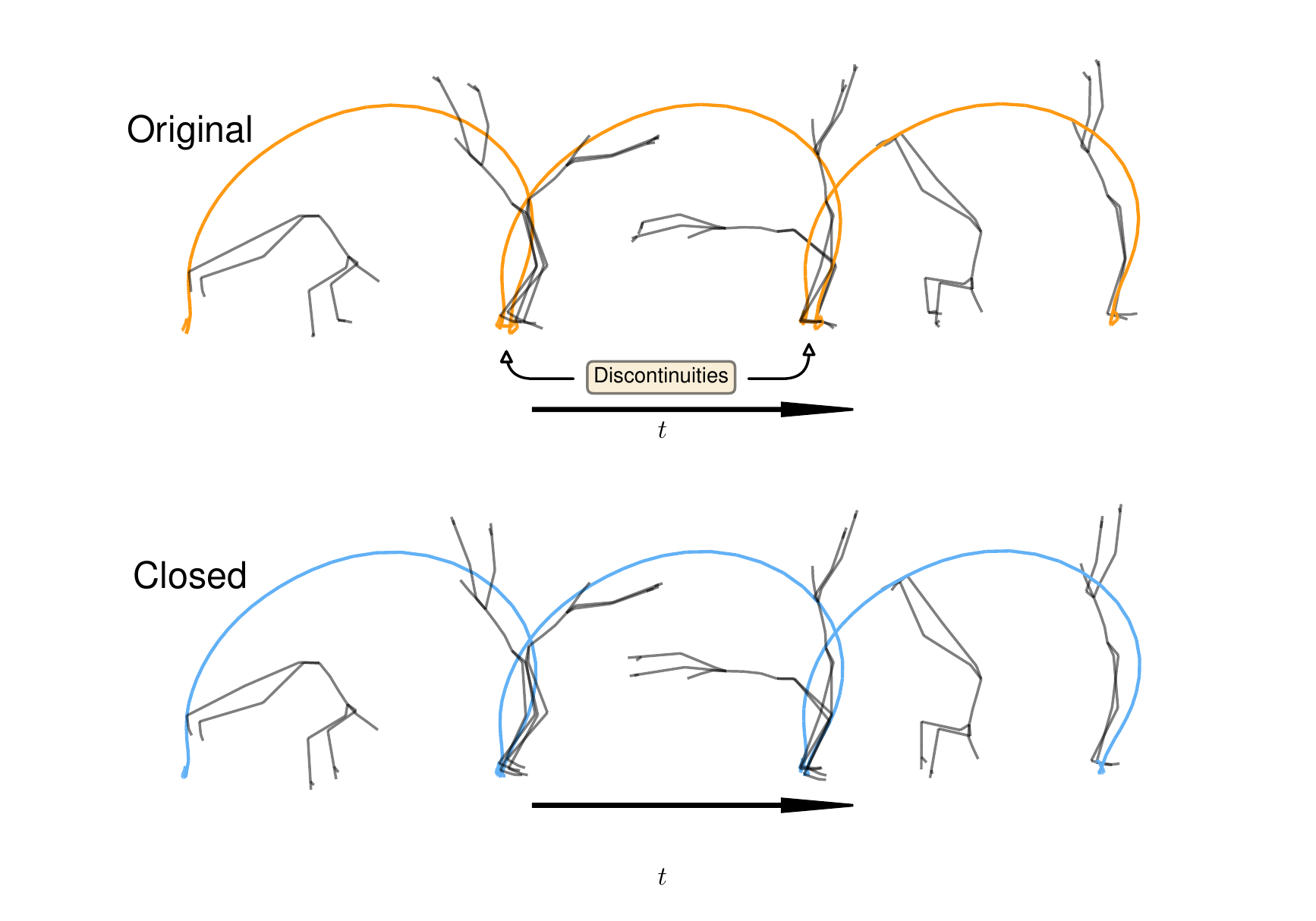}
\caption{\label{Fig:ClosedAnimation1} Application of closing algorithm to a handspring animation.
The motion is repeated twice. Note how, when repeating the original animation, the feet jump backwards to cover a gap between the first and last frame in the animation.
In the bottom plot, the curve closing method has been used to remove this discontinuity.
}
\end{center}
\end{figure}

As a second practical application, we will now look at two examples of how the curve closing algorithm can be applied to motion data to create periodic animations.
A similar scheme was developed in \cite{eslitzbichler_modelling_2014}, but based on an Euler angle parametrization for the character joints.
For animations exhibiting a large range of motions such as rolls and flips, this can result in highly degenerate results with characters seemingly just floating in the air.
The Lie group based algorithm developed in Section \ref{Sec:ClosedCurves} exhibits much stronger stability to such outliers.

In Figure \ref{Fig:ClosedAnimation1}, we start with a handspring motion that we repeat three times.
The plotted curve in the figure shows the trajectory of the right foot of the character.
Following this trajectory in the upper part of the figure, we can see a discontinuity in the foot position when the animation repeats.
The problem is that the start and end poses of the animation are too different, which results in a noticeable jerk when we repeat the handspring.
Note that while this gap may seem small in the static picture, it is much more noticeable when looking at the actual animation.
Figure \ref{Fig:ClosedAnimation1Overlap} shows the discontinuity in more detail.

\begin{figure}[hbtp]
\vspace{1cm}
\begin{minipage}[b][6cm]{0.47\linewidth}
\subfloat{\includegraphics[trim=1cm 0 0 0cm, clip=true]{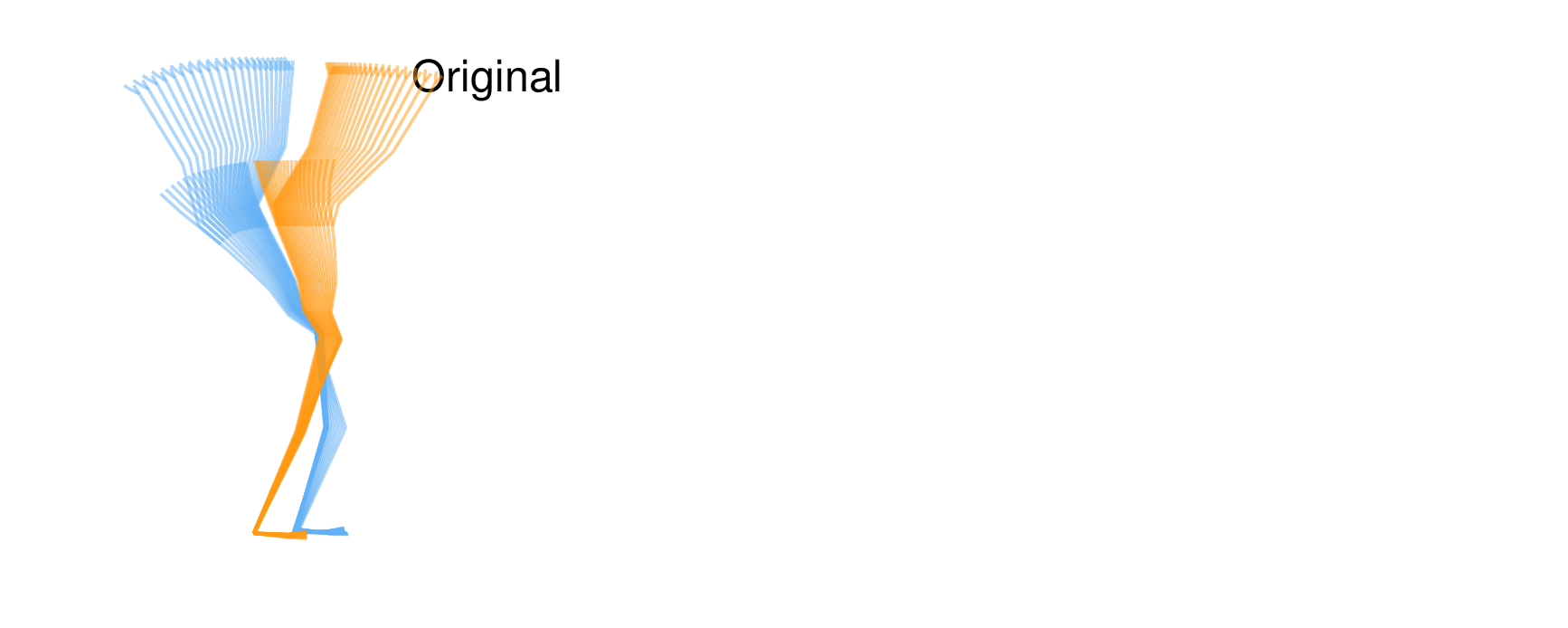}}
\end{minipage}
\begin{minipage}[b][6cm]{0.47\linewidth}
\subfloat{\includegraphics[trim=9cm 0 0 0cm, clip=true]{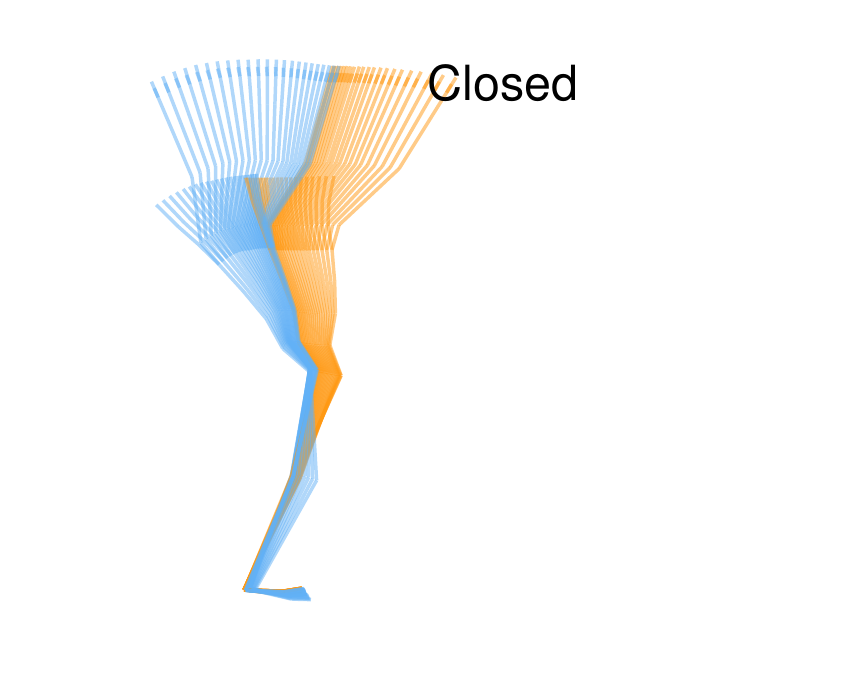}}
\end{minipage}
\vspace{-1cm}
\caption{\label{Fig:ClosedAnimation1Overlap} A closer look at the discontinuities in the handspring animation of Figure \ref{Fig:ClosedAnimation1}.
The (blue) left half of both figures shows the last few animation sampling points of the animation, the (orange) right half shows the first few sampling points when the animation is repeated. 
To avoid cluttering the figure, only the left half of the skeleton has been plotted.
}
\end{figure}

\begin{figure}[t]
\vspace{-1.5cm}
\begin{center}
\includegraphics[scale=1, trim = 2.5cm 0cm 0 0cm, clip=true]{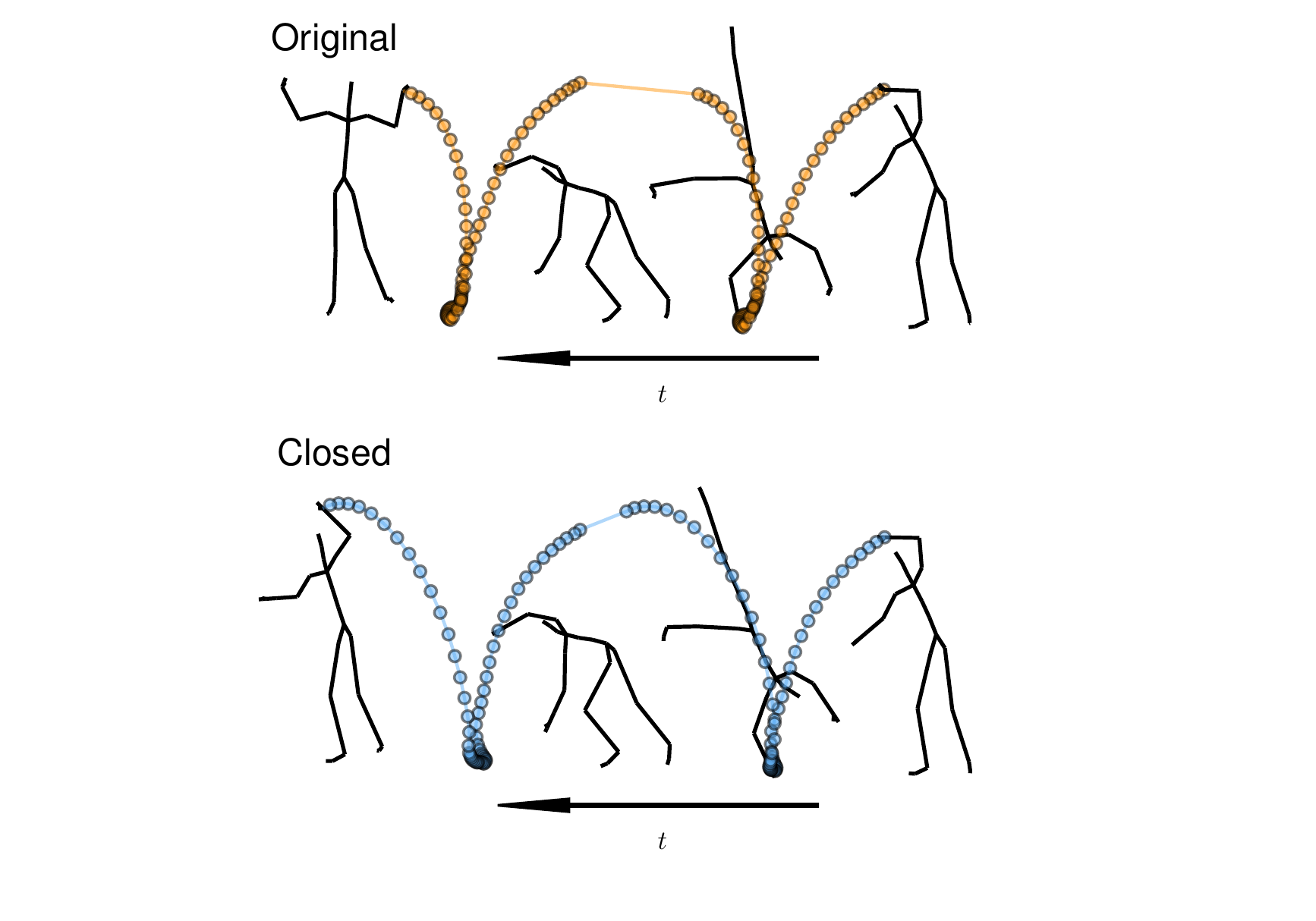}
\vspace{-1.5cm}
\caption{\label{Fig:ClosedAnimation2} Application of closing algorithm to a cartwheel animation.
Note that large different between start and end poses, on the right and the left respectively.
The motion is repeated once and suffers from a strong jerk when it repeats, especially in the left hand.
In the second row, the curve closing method has been used to alleviate this discontinuity.
The remaining gap could be closed further using additional iterations of the closing algorithm, but this eventually introduces visual artifacts, with the left hand sliding on the ground during the cartwheel.}
\end{center}
\end{figure}

On the bottom of Figure \ref{Fig:ClosedAnimation1}, we see the result of applying a few iterations of the curve closing method to the handspring animation.
The discontinuity has been strongly reduced, while the rest of the motion has been preserved.
The result is an aesthetically much more pleasing motion.

Figure \ref{Fig:ClosedAnimation2} shows another example of the animation closing method.
In this case, the character performs a cartwheel, and the animation starts and ends in two very different poses, which causes a big discontinuity, particularly in the left hand, when the animation is repeated.
The closing algorithm manages to close this gap to a large extent, while preserving the general appearance of the animation.
Note that the gap could be closed further by running more iterations of the closing algorithm, but at the expense of introducing visual artifacts, such as the left hand sliding on the ground during the cartwheel.

\section{Conclusion}
In this article, we have formulated a shape analysis framework for curves on Lie groups based on the SRVT approach \cite{srivastava_shape_2011}.
This has allowed us to construct efficient algorithms to solve two very different problems in computer animation: Interpolating between animations and generating cyclic animations.
Potential further applications include classification and search of animations.

Future work in this area could involve both joint-wise constraints (e.g., knees are not allowed to bend backwards) as well as blending of multiple animations using a Karcher mean approach, as described in Section \ref{Sec:ResultsMotionInterpolation}.
Also, investigating higher order continuity in the curve closing algorithm could lead to improved practical results. 

\section*{Acknowledgements}
This work has received funding from the European Unions Horizon 2020 research and innovation programme under the Marie Sk\l{}odowska-Curie grant agreement No.\ 691070.
The authors would like to thank Markus Grasmair and Rafael Dahmen for valuable discussions.
The data used in this project was obtained from mocap.cs.cmu.edu. The database was created with funding from NSF EIA-0196217.
Finally, we thank the anonymous referees whose insightful comments helped to improve this paper.

\begin{appendix}
 \section{Detailed proofs for Section 3}
 
 \subsection{Proof of Proposition \ref{Prop: trlog}}\label{App: linearLie}
 
 In Proposition \ref{Prop: trlog} a formula for the tangent map of the right-logarithmic derivative was given. 
 For the readers convenience we repeat the formulation of the proposition now.
\medskip
 
\textbf{Proposition}\emph{
 Let $c \in C^\infty (I,\LieG)$ and $v \in T_c C^\infty (I,\LieG) = \{w \in C^\infty (I,TG) \mid w(t) \in T_{c(t)} G,\ \forall t \in I\}$. 
 Then writing $\left[ \cdot, \cdot \right]$ for the Lie bracket in $\LieA$ we have}
  \begin{displaymath}
   T_c \delta^r (v) (t) = \frac{\dif}{\dif t} \left(R_{(c(t))^*}^{-1}(v(t)) \right) + \left[R_{(c(t))^*}^{-1}(v(t)), \delta^r (c) (t) \right].
  \end{displaymath}
Finally, let us write out the above formula for $T_c\delta^r$ in the case that $\LieG$ is a linear Lie group:

\begin{remark}[Proposition \ref{Prop: trlog} for linear Lie groups]
 Let $\LieG$ be a (finite-dimensional) linear Lie group, $c \colon I \rightarrow \LieG$ smooth and $v \in T_c C^\infty (I,\LieG)$, i.e.\ $v\colon I \rightarrow T\LieG$ with $v(t) \in T_{c(t)}\LieG$. 
 Then $$T_c\delta^r (v)(t) = \frac{\dif}{\dif t} (v(t) \cdot c(t)) + \left[ v(t)\cdot c^{-1}(t) , \dot{c}(t) \cdot c^{-1}(t)\right], $$
 where products are matrix products and the Lie bracket is the commutator bracket.  
\end{remark}
\begin{proof}[Proof of Proposition \ref{Prop: trlog}]
 Recall from \cite[38.1 Lemma]{kriegl97tcs} the product rule for the logarithmic derivative:
\begin{displaymath}
 \delta^r (f\cdot g)(t) = \delta^r (f)(t) + \mathrm{Ad} (f (t)) (\delta^r (g)(t)) \quad \text{ for } f,g \colon I \rightarrow G \text{ smooth}
\end{displaymath}
Here $\mathrm{Ad}(g) = (L_g)_* \circ (R_{g^{-1}})_*$ is the adjoint action of $\LieG$ on $\LieA$.
Observe that the derivative of $\mathrm{Ad}$ in the unit $e \in G$ is the derived representation of $\LieG$ on $\LieA$, which is $T_e \mathrm{Ad} (x)(y) = [x,y]$, i.e.\ the Lie bracket in $\LieA$.
Moreover, recall from \cite[Lemma 2.1]{glockner_regularity_2012} that $T_{e_\LieG} \delta^r (\xi)(t) = \tfrac{\dif}{\dif t}\xi(t)$, where $e_G \colon I \rightarrow \LieG$ is the unit in $C^\infty (I,\LieG)$ (cf.\ Proposition \ref{prop: LGP:prop}).
Now fix a curve $\gamma \colon ]-a,a[ \times I \rightarrow G$ with $\gamma(0,t) = c(t)$ for all $t \in I$ and $\left.\frac{\partial}{\partial \varepsilon}\right|_{\varepsilon =0} \gamma (\varepsilon ,t ) = v(t)$. 
Then the tangent map of $\delta^r$ can be computed as follows:
\begin{equation} \begin{aligned} \label{eq: derivative}
  T_c \delta^r (v) (t) &= \left.\frac{\partial}{\partial \varepsilon}\right|_{\varepsilon =0}\delta^r(\gamma (\varepsilon , t)) = \left.\frac{\partial}{\partial \varepsilon}\right|_{\varepsilon =0}\delta^r(\gamma (\varepsilon , t) \cdot (c(t))^{-1} \cdot c(t))\\
		       &= \left.\frac{\partial}{\partial \varepsilon}\right|_{\varepsilon =0}\left(\delta^r(\gamma (\varepsilon , t) \cdot (c(t))^{-1}) + \mathrm{Ad} (\gamma (\varepsilon , t) \cdot (c(t))^{-1}) (\delta^r c (t)) \right)\\
		       &= T_{e_G}\delta^r \left(\left.\frac{\partial}{\partial \varepsilon}\right|_{\varepsilon =0}\left(\gamma (\varepsilon , t) \cdot (c(t))^{-1}\right) \right)\\ 
		       & \hspace{1cm} + T_e \mathrm{Ad} \left(\left.\frac{\partial}{\partial \varepsilon}\right|_{\varepsilon =0} \left(\gamma (\varepsilon , t) \cdot (c(t))^{-1}\right)\right) (\delta^r c (t))\\
		       &= \frac{\partial}{\partial t} \left(R_{(c(t))^*}^{-1}(v(t)) \right) + \left[R_{(c(t))^*}^{-1}(v(t)), \delta^r (c) (t) \right] \qedhere
   \end{aligned}
  \end{equation}
\end{proof}

\subsection{Proof of Theorem \ref{thm: globaldist}}\label{App: distance}
 The aim of this section is to prove Theorem \ref{thm: globaldist}.
 Before we do this, we need the following auxiliary result.
 
 \begin{lemma}\label{lem: projector}
  Let $H$ be a Hilbert space with \textup{dim} $H>2$ and $S_H := \{ v \in H \mid \norm{v} = 1\}$ its unit sphere.
  For $i=1,2$ fix smooth curves $u_i \colon I \rightarrow S_H$ on a compact interval $I$.\footnote{Recall that the unit sphere of a Hilbert space is a closed submanifold by \cite[p. 29 Example]{MR1666820}, whence it makes sense to consider smooth curves to $S_H$.}
  Then there is an open subset $O \subseteq S_{H}$ which contains the image of $u_1$ and $u_2$ together with a diffeomorphism $\Phi \colon O \rightarrow V$ to a vector space $V$ with \textup{dim} $V >1$.
 \end{lemma}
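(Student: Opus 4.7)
The natural strategy is to remove a single suitably chosen point from $S_H$ and identify the complement with a vector space via stereographic projection.

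First I would observe that $K := u_1(I) \cup u_2(I)$ is compact in $S_H$, since $I$ is compact and the $u_i$ are continuous. The key step is to find a point $p \in S_H \setminus K$. I would distinguish two cases. If $H$ is finite-dimensional, then by hypothesis $\dim S_H = \dim H - 1 \geq 2$, while $K$ is a finite union of smooth images of the compact $1$-manifold $I$, hence has Hausdorff dimension at most $1$ and therefore cannot exhaust $S_H$. If $H$ is infinite-dimensional, the unit sphere $S_H$ fails to be locally compact (closed balls in $H$ are non-compact), hence $K$ being compact cannot cover $S_H$; equivalently, compact subsets of an infinite-dimensional Hilbert space have empty interior in $S_H$. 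Either way a point $p \in S_H \setminus K$ exists.

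Next I would invoke the stereographic projection from $p$, defined by
\begin{displaymath}
 \Phi \colon S_H \setminus \{p\} \to p^{\perp}, \qquad \Phi(x) := \frac{x - \langle x, p\rangle p}{1 - \langle x, p \rangle},
\end{displaymath}
which is a well-known smooth diffeomorphism onto the closed hyperplane $V := p^{\perp}$, with smooth inverse $\Phi^{-1}(y) = (\|y\|^2-1)/(\|y\|^2+1)\,p + 2y/(\|y\|^2+1)$. Since $p \notin K$, the open subset $O := S_H \setminus \{p\}$ contains both images $u_1(I)$ and $u_2(I)$, and $V = p^{\perp}$ is a Hilbert space of dimension $\dim H - 1 > 1$, as required.

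The only non-routine point is the existence of $p$; everything else is a standard computation with the stereographic chart. I do not expect any obstacle in the infinite-dimensional setting, since the stereographic projection formula and its smoothness work verbatim in any Hilbert space.
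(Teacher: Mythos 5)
Your proof is correct, and it takes a genuinely different route from the paper's in one of the two cases. For $2<\dim H<\infty$ the two arguments are essentially the same: both delete a single point missed by the curves and apply stereographic projection; you justify the existence of that point via Hausdorff dimension of the compact images, while the paper invokes Sard's theorem (the two justifications are interchangeable here). The real divergence is in the infinite-dimensional case: the paper does not look for a missed point at all, but instead takes $O=S_H$ and appeals to the nontrivial theorem (Bessaga) that the unit sphere of an infinite-dimensional Hilbert space is diffeomorphic to the whole space. You instead observe that $S_H$ is not compact (an orthonormal sequence has no convergent subsequence), so the compact set $u_1(I)\cup u_2(I)$ cannot exhaust it, and then run the same explicit stereographic chart onto $p^{\perp}$. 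This buys a uniform, elementary and fully explicit proof that avoids the deep infinite-dimensional topology result; what it gives up is the slightly stronger conclusion in the paper's Case 2, where $O$ can be taken to be all of $S_H$ (irrelevant for the application). Your stereographic formulas and the verification that $\Phi$ and $\Phi^{-1}$ are mutually inverse smooth maps onto the closed hyperplane $V=p^{\perp}$, with $\dim V=\dim H-1>1$, are all in order, and the smoothness of these explicit rational expressions holds verbatim in the Hilbert space setting used in the paper.
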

 
 \begin{proof}
 We have to distinguish two cases: 
 
 \textbf{Case 1:} $2 < $dim $H < \infty$. 
 Since $I$ is an interval and $\text{dim } S_{H} = d >1$, Sards Theorem (see \cite[XVI, \S 1 Theorem 1.4]{MR1666820}) implies that the smooth curves $u_1, u_2 \colon I \rightarrow S_{H}$ are not surjective.
 Hence we can pick $x \in S_{H}$ such that $x \not \in u_1 (I) \cup u_2 (I)$.
 Set $O= S_{H} \setminus \{x\}$ and let $\Phi \colon O \rightarrow \mathbb{R}^d$ be the stereographic projection through $x$. Then $\Phi$ is a diffeomorphism as needed.
 
 \textbf{Case 2:} \emph{$H$ is infinite-dimensional.} 
 As $H$ is an infinite-dimensional Hilbert space, it is well known (cf.\ \cite{MR1363804}) that $S_{H}$ is diffeomorphic to $H$ itself. 
 Hence set $O = S_{H}$ and let $\Phi \colon O \rightarrow H$ be the diffeomorphism constructed in ibid.
\end{proof}
 Notice that the proof of Theorem \ref{thm: globaldist} below does \textbf{not} generalise to the case $\mathrm{dim}\ \LieA =2$.\medskip

\textbf{Theorem} \emph{ If $\mathrm{dim}\ \LieA > 2$ then the geodesic distance of $C^\infty (I,\LieA\setminus \{0\})$ is globally given by the $L^2$-distance.  }

   \begin{proof}[Proof of Theorem \ref{thm: globaldist}]
 Let $q_1 , q_2 \in C^\infty (I, \LieA \setminus \{0\})$. 
 Denote by $d_{C^\infty (I,\LieA \setminus \{0\})}$ the geodesic distance in $C^\infty (I,\LieA \setminus \{0\})$. 
 Observe first that the geodesic distance coincides locally with the $L^2$-distance $d_{L^2}$. 
 In particular, this implies $$d_{C^\infty (I,\LieA \setminus \{0\})} (q_1,q_2) \geq d_{L^2} (q_1,q_2).$$
 
 We have to make sure that both distances coincide even if the minimizing geodesic $c_{q_1,q_2} (s) = (1-s)q_1 + s q_2$ in $C^\infty (I,\LieA)$ is not contained in $C^\infty (I,\LieA \setminus \{0\})$.
 Obviously, the minimizing geodesic is not contained in $C^\infty (I,\LieA \setminus \{0\})$ if there are $s,t$ such that $c_{q_1,q_2} (s)(t) = 0$.
 Such $(s,t)$ can exist if and only if $\frac{q_1 (t)}{\norm{q_1 (t)}} = - \frac{q_2 (t)}{\norm{q_2 (t)}}$.
  Our aim is now to find a smooth perturbation of $u_1$ which is arbitrarily close to $u_1$ (with respect to the $L^2$-norm) such that the linear paths $c_{v_1 ,q_1} (s) = sv_1 + (1-s)q_1$ and $c_{v_1,q_2} (s) = sv_1 + (1-s)q_2$ are contained in $C^\infty (I, \LieA \setminus \{0\})$. 
 We split the problem into two distinct steps:
 \medskip
 
 \textbf{Step 1:} \emph{Construct a perturbation $v_1$ of $q_1$ such that $c_{v_1,q_2}$ is in $C^\infty (I,\LieA \setminus \{0\})$.} 
 Set $u_1 := \frac{q_1}{\norm{q_1}}$ and $u_2 := -\frac{q_2}{\norm{q_2}}$ to obtain maps which take their image in the unit sphere $S_{\LieA} = \{ v \in \LieA \mid \norm{v}=1\}$.
 Since $\LieA$ is a Hilbert space the unit sphere $S_{\LieA}$ is a closed submanifold of $\LieA$ (cf.\ \cite[p. 29 Example]{MR1666820}) and we see that $u_i \in C^\infty (I,S_{\LieA})$ for $i=1,2$.
 Now it suffices to construct a smooth perturbation $\tilde{v} \colon I \rightarrow S_{\LieA}$ of $u_1$ such that $\tilde{v} (t) \neq u_2 (t)$ for all $t \in I$.
 By Lemma \ref{lem: projector}, there is an open set $O \subseteq S_{\LieA}$ which contains the images of $u_1$ and $u_2$ together with a diffeomorphism $\Phi \colon O \rightarrow V$ to some vector space $V$ with $\text{dim} V > 1$.
 Consider now the smooth curve $w_1 = \Phi \circ u_1 - \Phi \circ u_2  \colon I \rightarrow V$. Since $\text{dim } V > 1$ we can clearly construct a smooth mapping $\tilde{w} \colon I \rightarrow V\setminus \{0\}$ which satisfies 
 \begin{equation}\label{eq: est1}
 \sup_{t\in I} \norm{\Phi \circ u_1 - (\tilde{w} + \Phi \circ u_2)} = \sup_{t \in I}\norm{w_1 (t) - \tilde{w} (t)} < \delta
 \end{equation}
 for some arbitrary but fixed $\delta >0$ (the control $\delta$ will be needed in Step 2 below).
 Then $\tilde{v} := \Phi^{-1} \circ (\tilde{w} + \Phi \circ u_2) \colon I \rightarrow S_{\LieA}$ is smooth, and satisfies $\tilde{v} (t) \neq u_2 (t)$ for all $t \in I$ (since $\Phi (\tilde{v}) = \tilde{w}(t) + \Phi \circ u_2(t) \neq \Phi \circ u_2 (t)$). 
 Define $v_1(t) := \tilde{v}(t)\cdot \norm{q_1}(t)$ to obtain a smooth map such that $c_{v_1,q_2} (s) = sv_1 + (1-s)u_2$ is a path in $C^\infty (I,\LieA \setminus \{0\})$.
  \medskip
 
 \textbf{Step 2:} \emph{Adjust $v_1$ such that also $c_{v_1,q_1}$ is in $C^\infty (I,\LieA \setminus \{0\})$.}
 Since $q_1 \colon I \rightarrow \LieA \setminus \{0\}$ is smooth and $I$ is compact, $r : = \inf_{t\in I} \norm{q_1 (t)} >0$.
 Hence if \begin{equation}\label{eq: bound}
           \sup_{t \in I}\norm{q_1 (t)- v_1 (t)} < r,
          \end{equation}
 then $c_{v_1,q_1}$ is contained in $C^\infty (I,\LieA \setminus \{0\})$. 
 To see that we can choose $v_1 =  \norm{q_1} \cdot \Phi^{-1} (\tilde{w} + \Phi \circ u_2)$ such that \eqref{eq: bound} is satisfied, we consider the map  
  \begin{displaymath}
   \theta \colon C (I, V) \rightarrow C (I,\LieA \setminus \{0\}),\quad f \mapsto (t \mapsto \norm{q_1(t)} (\Phi^{-1} (f(t) + \Phi \circ u_2 (t))))
  \end{displaymath}
  and from Step 1 we have $v_1=\theta(\tilde{w})$. 
  We claim that $\theta$ is continuous if we endow the spaces of continuous maps $C (I, V)$ and $C (I,\LieA \setminus \{0\})$ with the topology induced by $\norm{f}_\infty : = \sup_{t \in I} \norm{f(t)}$ (where $\norm{\cdot}$ denotes the norm of $V$ and $\LieA$, respectively).
  If this is true, then the proof can be completed as follows. 
  Since $\theta$ is continuous with $\theta (\Psi \circ \frac{q_1}{\norm{q_1} - \Psi \circ u_2})=q_1$, we can choose $\delta_r >0$ so small to ensure that \eqref{eq: bound} holds if \eqref{eq: est1} is satisfied.
  Thus $c_{v_1,q_1}$ takes its image in $C^\infty (I,\LieA \setminus \{0\})$ if $\delta_r >0$ is small enough.
 \medskip
 
 Summing up, we have seen that we can always construct a smooth perturbation $v_1$ of $q_1$ such that the linear paths $c_{v_1,q_1}$ and $c_{v_1,q_2}$ are contained in $C^\infty (I,\LieA \setminus \{0\})$.
 Moreover, for $\varepsilon >0$ the estimates in Step 2 show that we can choose $\delta_r > \delta_\epsilon>0$ such that the left hand side of \eqref{eq: bound} is smaller than $\varepsilon$.
 Hence for each $\varepsilon >0$ we can choose a smooth map $v_1^\varepsilon$ such that $d_{L^2} (v_1^\varepsilon, q_1) < \varepsilon$. 
 In particular, $v_1^\varepsilon$ converges to $q_1$ with respect to the $L^2$-distance.
 Then the geodesic distance satisfies  
 \begin{align*}
   d_{C^\infty (I,\LieA \setminus \{0\})} (q_1,q_2) &\leq d_{C^\infty (I,\LieA \setminus \{0\})} (q_1, v_1^\varepsilon) + d_{C^\infty (I,\LieA \setminus \{0\})} (v_1^\varepsilon , q_2) \\
						    &=d_{L^2}(q_1,v_1^\varepsilon) + d_{L^2} (v_1^\varepsilon, q_2) \\
						    &\leq \varepsilon + d_{L^2} (v_1^\varepsilon, q_2)  \xrightarrow{\varepsilon \rightarrow 0} d_{L^2}(q_1,q_2).
 \end{align*}
 Thus the geodesic distance of $C^\infty (I,\LieA\setminus \{0\})$ coincides with the $L^2$-distance.\medskip
 
 \textbf{Proof of the claim: $\theta$ is continuous.} Recall that the topology on $C (I,V)$ induced by $\norm{\cdot}_\infty$ coincides with the compact open topology.
 Hence \cite[Theorem 3.4.2]{MR1039321} shows that the map $\gamma_\Phi \colon C(I,V) \rightarrow C(I,S_{\LieA}), f \mapsto \Phi^{-1} \circ f$ is continuous as $\Phi^{-1}$ is continuous. 
 Further, $C^\infty (I,V)$ with the above topology is a Banach space, whence 
 $$h \colon C (I,V) \rightarrow C(I,S_{\LieA}),\quad f \mapsto \gamma_\Phi \circ (f + \Phi \circ u_1)$$ is continuous.
 Now as $\norm{\norm{q_1} f}_\infty \leq \norm{q_1}_\infty \norm{f}_\infty$ we deduce that $n_{q_1} \colon C(I,S_{\LieA}) \rightarrow C(I,\LieA \setminus \{0\}), f \mapsto \norm{q_1} f$ is continuous.
 In conclusion, $\theta = n_{q_1} \circ h$ is continuous.
\end{proof}
\end{appendix}

\bibliographystyle{abbrv}
\bibliography{SO3Curves}

\begin{thebibliography}{10}

\bibitem{bastiani64}
A.~Bastiani.
\newblock Applications diff\'erentiables et vari\'et\'es diff\'erentiables de
  dimension infinie.
\newblock {\em J. Analyse Math.}, 13:1--114, 1964.

\bibitem{bauer_new_2011}
M.~Bauer and M.~Bruveris.
\newblock A {New} {Riemannian} {Setting} for {Surface} {Registration}.
\newblock pages 182--193, Sept. 2011.

\bibitem{bauer_constructing_2014}
M.~Bauer, M.~Bruveris, S.~Marsland, and P.~W. Michor.
\newblock Constructing reparameterization invariant metrics on spaces of plane
  curves.
\newblock {\em Differential Geometry and its Applications}, 34:139--165, June
  2014.

\bibitem{bauer_overview_2014}
M.~Bauer, M.~Bruveris, and P.~W. Michor.
\newblock Overview of the {Geometries} of {Shape} {Spaces} and {Diffeomorphism}
  {Groups}.
\newblock {\em Journal of Mathematical Imaging and Vision}, pages 1--38, 2014.

\bibitem{MR3444354}
M.~Bauer, M.~Bruveris, and P.~W. Michor.
\newblock Why use {S}obolev metrics on the space of curves.
\newblock In {\em Riemannian computing in computer vision}, pages 233--255.
  Springer, Cham, 2016.

\bibitem{bauer_landmark-guided_2015}
M.~Bauer, M.~Eslitzbichler, and M.~Grasmair.
\newblock Landmark-{Guided} {Elastic} {Shape} {Analysis} of {Human} {Character}
  {Motions}.
\newblock {\em arXiv:1502.07666 [cs]}, Feb. 2015.

\bibitem{bauer_sobolev_2011}
M.~Bauer, P.~Harms, and P.~W. Michor.
\newblock Sobolev metrics on shape space of surfaces.
\newblock {\em Journal of Geometric Mechanics}, 3(4):389 -- 438, 2011.

\bibitem{carnegie-mellon_carnegie-mellon_2003}
{Carnegie-Mellon}.
\newblock Carnegie-{Mellon} {Mocap} {Database}., 2003.

\bibitem{celledoni_introduction_2014}
E.~Celledoni, H.~Marthinsen, and B.~Owren.
\newblock An {Introduction} to {Lie} {Group} {Integrators} - {Basics}, {New}
  {Developments} and {Applications}.
\newblock {\em J. Comput. Phys.}, 257:1040--1061, Jan. 2014.

\bibitem{celledoni_lie_1999}
E.~Celledoni and B.~Owren.
\newblock Lie {Group} {Methods} for {Rigid} {Body} {Dynamics} and {Time}
  {Integration} on {Manifolds}.
\newblock {\em Computer Methods in Applied Mechanics and Engineering},
  19:421--438, 1999.

\bibitem{MR0458335}
J.~Cheeger and D.~G. Ebin.
\newblock {\em Comparison theorems in {R}iemannian geometry}.
\newblock North-Holland Publishing Co., 1975.
\newblock North-Holland Mathematical Library, Vol. 9.

\bibitem{cotter_reparameterisation_2012}
C.~J. Cotter, A.~Clark, and J.~Peiró.
\newblock A {Reparameterisation} {Based} {Approach} to {Geodesic} {Constrained}
  {Solvers} for {Curve} {Matching}.
\newblock {\em International Journal of Computer Vision}, 99(1):103--121, Aug.
  2012.

\bibitem{MR1363804}
T.~Dobrowolski.
\newblock Every infinite-dimensional {H}ilbert space is real-analytically
  isomorphic with its unit sphere.
\newblock {\em J. Funct. Anal.}, 134(2):350--362, 1995.

\bibitem{MR1039321}
R.~Engelking.
\newblock {\em General topology}, volume~6 of {\em Sigma Series in Pure
  Mathematics}.
\newblock Heldermann Verlag, Berlin, second edition, 1989.

\bibitem{eslitzbichler_modelling_2014}
M.~Eslitzbichler.
\newblock Modelling character motions on infinite-dimensional manifolds.
\newblock {\em The Visual Computer}, pages 1--12, July 2014.

\bibitem{fuchs_shape_2009}
M.~Fuchs, B.~Jüttler, O.~Scherzer, and H.~Yang.
\newblock Shape {Metrics} {Based} on {Elastic} {Deformations}.
\newblock {\em Journal of Mathematical Imaging and Vision}, 35(1):86--102, May
  2009.

\bibitem{glockner_regularity_2012}
H.~Glöckner.
\newblock Regularity properties of infinite-dimensional {Lie} groups, and
  semiregularity, 2012.
\newblock {a}rXiv: 1208.0715 [math].

\bibitem{glockner15fos}
H.~Glöckner.
\newblock Fundamentals of submersions and immersions between
  infinite-dimensional manifolds, Mar. 2015.
\newblock {a}rXiv:1502.05795v3 [math].

\bibitem{gonzalez_castro_cyclic_2010}
G.~González~Castro, M.~Athanasopoulos, and H.~Ugail.
\newblock Cyclic animation using partial differential equations.
\newblock {\em The Visual Computer}, 26(5):325--338, 2010.

\bibitem{hausdorff_symbolische}
F.~{Hausdorff}.
\newblock {Die symbolische Exponentialformel in der Gruppentheorie}.
\newblock {Leipz. Ber. 58, 19-48.}, 1906.

\bibitem{hilgert12sag}
J.~Hilgert and K.~H. Neeb.
\newblock {\em Structure and geometry of {L}ie groups}.
\newblock Springer Monographs in Mathematics. Springer, New York, 2012.

\bibitem{iserles_lie-group_2000}
A.~Iserles, H.~Z. Munthe-Kaas, S.~P. Nørsett, and A.~Zanna.
\newblock Lie-group methods.
\newblock {\em Acta Numerica}, 9:215--365, Jan. 2000.

\bibitem{klassen_path-straightening_2005}
E.~Klassen and A.~Srivastava.
\newblock A path-straightening method for finding geodesics in shape spaces of
  closed curves in {R}3.
\newblock {\em SIAM Journal of Applied Mathematics}, 2005.

\bibitem{kovar_flexible_2003}
L.~Kovar and M.~Gleicher.
\newblock Flexible {Automatic} {Motion} {Blending} with {Registration}
  {Curves}.
\newblock In {\em Proceedings of the 2003 {ACM} {SIGGRAPH}/{Eurographics}
  {Symposium} on {Computer} {Animation}}, {SCA} '03, pages 214--224,
  Aire-la-Ville, Switzerland, Switzerland, 2003. Eurographics Association.

\bibitem{kovar_automated_2004}
L.~Kovar and M.~Gleicher.
\newblock Automated extraction and parameterization of motions in large data
  sets.
\newblock In {\em {ACM} {Transactions} on {Graphics} ({TOG})}, volume~23, pages
  559--568. ACM, 2004.

\bibitem{kovar_motion_2002}
L.~Kovar, M.~Gleicher, and F.~Pighin.
\newblock Motion {Graphs}.
\newblock {\em ACM Trans. Graph.}, 21(3):473--482, July 2002.

\bibitem{kriegl97tcs}
A.~Kriegl and P.~W. Michor.
\newblock {\em The convenient setting of global analysis}, volume~53 of {\em
  Mathematical Surveys and Monographs}.
\newblock American Mathematical Society, Providence, RI, 1997.

\bibitem{kriegl_regular_1997}
A.~Kriegl and P.~W. Michor.
\newblock Regular infinite dimensional {Lie} groups.
\newblock {\em Journal of Lie Theory}, 7:61--99, 1997.

\bibitem{kurtek_novel_2010}
S.~Kurtek, E.~Klassen, Z.~Ding, and A.~Srivastava.
\newblock A novel riemannian framework for shape analysis of 3d objects.
\newblock In {\em 2010 {IEEE} {Conference} on {Computer} {Vision} and {Pattern}
  {Recognition} ({CVPR})}, pages 1625 --1632, June 2010.

\bibitem{kurtek_elastic_2012}
S.~Kurtek and A.~Srivastava.
\newblock Elastic symmetry analysis of anatomical structures.
\newblock In {\em 2012 {IEEE} {Workshop} on {Mathematical} {Methods} in
  {Biomedical} {Image} {Analysis} ({MMBIA})}, pages 33 --38, Jan. 2012.

\bibitem{lahiri_precise_2015}
S.~Lahiri, D.~Robinson, and E.~Klassen.
\newblock Precise {Matching} of {PL} {Curves} in $\mathbb{R}^{N}$ in the
  {Square} {Root} {Velocity} {Framework}, 2015.
\newblock {a}rXiv:1501.00577 [math].

\bibitem{MR1666820}
S.~Lang.
\newblock {\em Fundamentals of differential geometry}, volume 191 of {\em
  Graduate Texts in Mathematics}.
\newblock Springer-Verlag, New York, 1999.

\bibitem{michor_overview_2007}
P.~W. Michor and D.~Mumford.
\newblock An overview of the {Riemannian} metrics on spaces of curves using the
  {Hamiltonian} approach.
\newblock {\em Applied and Computational Harmonic Analysis}, 23(1):74--113,
  July 2007.

\bibitem{mio_shape_2007}
W.~Mio, A.~Srivastava, and S.~Joshi.
\newblock On {Shape} of {Plane} {Elastic} {Curves}.
\newblock {\em Int. J. Comput. Vision}, 73(3):307--324, July 2007.

\bibitem{MR2261066}
K.-H. Neeb.
\newblock Towards a {L}ie theory of locally convex groups.
\newblock {\em Jpn. J. Math.}, 1(2):291--468, 2006.

\bibitem{pejsa_state_2010}
T.~Pejsa and I.~Pandzic.
\newblock State of the {Art} in {Example}-{Based} {Motion} {Synthesis} for
  {Virtual} {Characters} in {Interactive} {Applications}.
\newblock {\em Computer Graphics Forum}, 29(1):202--226, 2010.

\bibitem{MR3351079}
A.~Schmeding and C.~Wockel.
\newblock The {L}ie group of bisections of a {L}ie groupoid.
\newblock {\em Ann. Global Anal. Geom.}, 48(1):87--123, 2015.

\bibitem{sebastian_aligning_2003}
T.~Sebastian, P.~Klein, and B.~Kimia.
\newblock On aligning curves.
\newblock {\em IEEE Transactions on Pattern Analysis and Machine Intelligence},
  25(1):116--125, Jan. 2003.

\bibitem{sharon_2d-shape_2006}
E.~Sharon and D.~Mumford.
\newblock 2d-{Shape} {Analysis} {Using} {Conformal} {Mapping}.
\newblock {\em International Journal of Computer Vision}, 70(1):55--75, June
  2006.

\bibitem{shoemake_animating_1985}
K.~Shoemake.
\newblock Animating {Rotation} with {Quaternion} {Curves}.
\newblock {\em SIGGRAPH Comput. Graph.}, 19(3):245--254, July 1985.

\bibitem{srivastava_statistical_2005}
A.~Srivastava, S.~Joshi, W.~Mio, and X.~Liu.
\newblock Statistical {Shape} {Analysis}: {Clustering}, {Learning}, and
  {Testing}.
\newblock {\em IEEE Trans. Pattern Anal. Mach. Intell}, 27:590--602, 2005.

\bibitem{srivastava_shape_2011}
A.~Srivastava, E.~Klassen, S.~Joshi, and I.~Jermyn.
\newblock Shape {Analysis} of {Elastic} {Curves} in {Euclidean} {Spaces}.
\newblock {\em IEEE Transactions on Pattern Analysis and Machine Intelligence},
  33(7):1415 --1428, July 2011.

\bibitem{su_statistical_2014}
J.~Su, S.~Kurtek, E.~Klassen, and A.~Srivastava.
\newblock Statistical analysis of trajectories on {Riemannian} manifolds:
  {Bird} migration, hurricane tracking and video surveillance.
\newblock {\em The Annals of Applied Statistics}, 8(1):530--552, Mar. 2014.

\bibitem{su_rate-invariant_2014}
J.~Su, A.~Srivastava, F.~de~Souza, and S.~Sarkar.
\newblock Rate-{Invariant} {Analysis} of {Trajectories} on {Riemannian}
  {Manifolds} with {Application} in {Visual} {Speech} {Recognition}.
\newblock In {\em 2014 {IEEE} {Conference} on {Computer} {Vision} and {Pattern}
  {Recognition} ({CVPR})}, pages 620--627, June 2014.

\bibitem{younes_computable_1998}
L.~Younes.
\newblock Computable {Elastic} {Distances} between {Shapes}.
\newblock {\em SIAM Journal on Applied Mathematics}, 58(2):565--586, Apr. 1998.

\bibitem{younes_spaces_2012}
L.~Younes.
\newblock Spaces and manifolds of shapes in computer vision: {An} overview.
\newblock {\em Image and Vision Computing}, 30(6–7):389--397, June 2012.

\end{thebibliography}
\end{document}